\newtheorem{cor}{Corollary}
\newtheorem{thm}{Theorem}
\newtheorem{lmm}{Lemma}
\newtheorem{prop}{Proposition}
\newcommand\thickbar[1]{\accentset{\rule{.4em}{1pt}}{#1}}
\def\dd#1{{\,\mathrm{d}}#1}
\def\ve{\varepsilon}
\def\tr#1{\lfloor#1\rfloor}
\def\qed{{\quad\rule{1mm}{3mm}\,}}
\def\pf{\noindent \emph{Proof.}\ }
\def\le{\leqslant}
\def\ge{\geqslant}
\def\cl#1{\lceil#1\rceil}
\def\dbbracket#1{\llbracket #1 \rrbracket}
\DeclareMathOperator{\Exp}{\mathrm{Exp}}
\title{Probabilistic analysis of the ($1+1$)-evolutionary algorithm}
\author{Hsien-Kuei Hwang\thanks{Partially supported by a grant from the
    Franco-Taiwan Orchid Program.} \\
    Institute of Statistical Science, \\
    Institute of Information Science\\
    Academia Sinica\\
    Taipei 115\\
    Taiwan
\and Alois Panholzer\thanks{Supported by the Austrian Science
Foundation FWF under
    the Grant P25337-N23.}\\
    Institut f\"ur Diskrete Mathematik\\ und Geometrie\\
    Technische Universit\"at Wien\\
    Wiedner Hauptstra\ss e 8-10/104 \\
    1040 Wien\\
    Austria
\and Nicolas Rolin\\
    LIPN, Institut Galil\'ee\\
    Universit\'e\ Paris 13\\
    93430, Villetaneuse\\ France
\and Tsung-Hsi Tsai \\
    Institute of Statistical Science \\
    Academia Sinica\\
    Taipei 115\\
    Taiwan
\and Wei-Mei Chen\thanks{Partially supported by MOST under
the Grant 103-2221-E-011-113.}\\
    Department of Electronic and Computer Engineering\\
    National Taiwan University of \\ Science and Technology\\
    Taipei 106\\ Taiwan}
\date{\today}
\begin{document}
\maketitle

\begin{abstract}

We give a detailed analysis of the cost used by the
$(1+1)$-evolutionary algorithm. The problem has been approached in
the evolutionary algorithm literature under various views,
formulation and degree of rigor. Our asymptotic approximations for
the mean and the variance represent the strongest of their kind. The
approach we develop is also applicable to characterize the limit
laws and is based on asymptotic resolution of the underlying
recurrence. While most approximations have their simple formal
nature, we elaborate on the delicate error analysis required for
rigorous justifications.

\end{abstract}

\section{Introduction}

The last two decades or so have seen an explosion of application
areas of evolutionary algorithms (EAs) in diverse scientific or
engineering disciplines. An EA is a random search heuristic, using
evolutionary mechanisms such as crossover and mutation, for finding
a solution that often aims at maximizing an objective function. EAs
are proved to be extremely useful for combinatorial optimization
problems because they can solve complicated problems with reasonable
efficiency using only basic mathematical modeling and simple
operators; see \cite{C06,D01, H97} for more information. Although
EAs have been widely applied in solving practical problems, the
analysis of their performance and efficiency, which often provides
better modeling prediction for practical uses, are much less
addressed; only computer simulation results are available for most
of the EAs in use. See for example \cite{BSW02, DJW98,  GKS99, HY01,
HY02}. We are concerned in this paper with a precise probabilistic
analysis of a simple algorithm called ($1+1$)-EA (see below for more
details).

A typical EA comprises several ingredients: the coding of solution,
the population of individuals, the selection for reproduction, the
operations for breeding new individuals, and the fitness function to
evaluate the new individual. Thus mathematical analysis of the total
complexity or the stochastic description of the algorithm dynamics
is often challenging. It proves more insightful to look instead at
simplified versions of the algorithm, seeking for a compromise
between mathematical tractability and general predictability. Such a
consideration was first attempted by B\"ack \cite{B92} and
M\"uhlenbein in \cite{M92} in the early 1990's for the ($1+1$)-EA,
using only one individual with a single mutation operator at each
stage. An outline of the procedure is as follows.

\begin{quote}
\textbf{Algorithm ($1+1$)-EA}
\begin{enumerate}
\item Choose an initial string $\mathbf{x}
    \in \{0, 1\}^n$ uniformly at random
\item Repeat until a terminating condition is reached
\begin{itemize}
    \item Create $\mathbf{y}$ by flipping each bit of $x$
    independently with probability $p$
    \item Replace $\mathbf{x}$ by $\mathbf{y}$ iff $f(\mathbf{y})
    \ge f(\mathbf{x})$
\end{itemize}
\end{enumerate}
\end{quote}

Step 1 is often realized by tossing a fair coin for each of the $n$
bits, one independently of the others, and the terminating condition
is usually either reaching an optimum state (if known) or by the
number of iterations.

M\"uhlenbein \cite{M92} considered in detail the complexity of
($1+1$)-EA under the fitness function \textsc{OneMax}, which counts
the number of ones, namely, $f(\mathbf{x}) = \sum_{1\le j\le n}
x_j$. The expected time needed to reach the optimum value, which is
often referred to as the expected \emph{optimization time}, for
\textsc{OneMax}, denoted for convenience by $\mathbb{E}(X_n)$, was
argued to be of order $n\log n$, indicating the efficiency of the
($1+1$)-EA. B\"ack \cite{B92} derived expressions for the transition
probabilities. Finer asymptotic approximation of the form
\begin{align}\label{EYn-intro}
    \mathbb{E}(X_n) = en\log n +c_1 n + o(n),
\end{align}
was derived by Garnier et al.\ in \cite{GKS99}, where $c_1\approx
-1.9$ when the mutation rate $p=\frac1n$. They went further by
characterizing the limiting distribution of $\frac{X_n-en\log
n}{en}$ in terms of a log-exponential distribution (which is indeed
a double exponential or a Gumbel distribution). However, some of
their proofs, notably the error analysis, seem incomplete (as
indicated in their paper). Thus a strong result such as
\eqref{EYn-intro} has remained obscure in the EA literature.

More recent attention has been paid to the analysis of the
($1+1$)-EA; see for example \cite{AD11,NW10}. We briefly mention
some progresses. Neumann and Witt \cite{NW09} proved that a simple
Ant Colony Optimization algorithm behaves like the ($1 + 1$)-EA and
all results for ($1+1$)-EA translate directly into those for the ACO
algorithm. Sudholt and Witt \cite{SW10} showed a similar translation
into Particle Swarm Optimization algorithms. Moreover, variants such
as ($\mu+1$)-EA in \cite{W06} and ($1+1$)-EA over a finite alphabet
in \cite{DP12} were investigated. The expected optimization time
required by ($1+1$)-EA has undergone successive improvements, yet
none of them reached the precision of Garnier et al.'s result
\eqref{EYn-intro}; we summarize in the following table some recent
findings.
\begin{small}
\begin{center}
\begin{tabular}{|c|c||c|c|} \hline
\multicolumn{2}{|c||}{\textsc{OneMax} function} &
\multicolumn{2}{|c|}{Linear functionals} \\ \hline\hline
$\begin{array}{c}
\text{Doerr et al.\ \cite{DFW10}}\\
\text{(2010)} \end{array} $ & $
\begin{array}{c} \text{lower bound}\\
(1 - o(1))en \log(n)\end{array}$ & $\begin{array}{c}
\text{Jagerskupper \cite{J11}}\\
\text{(2011)} \end{array}$ & $
\begin{array}{c} \text{upper bound}\\
2.02en \log(n) \end{array}$ \\ \hline $\begin{array}{c}
\text{Sudholt \cite{S10}}\\
\text{2010} \end{array}$
& $\begin{array}{c} \text{lower bound}\\
en\log(n) - 2n \log \log(n)\end{array}$ & $\begin{array}{c}
\text{Doerr et al.\ \cite{DJW10}}\\
\text{(2010)} \end{array}$ & $
\begin{array}{c} \text{upper bound}\\1.39en \log(n)
    \end{array}$ \\ \hline
$\begin{array}{c}
\text{Doerr et al.\ \cite{DFW11}}\\
\text{(2011)} \end{array} $ & $en \log(n) - \Theta(n)$ &
$\begin{array}{c}
\text{Witt \cite{W13}}\\
\text{(2013)} \end{array}$ & $
\begin{array}{c} \text{upper bound}\\
    en \log(n) + O(n)\end{array}$ \\ \hline
\end{tabular}
\end{center}
\end{small}

In this paper we focus on the mutation rate\footnote{From an
algorithmic point of view, a mutation rate of order $\gg \frac1n$
leads to a complexity higher than polynomial, and is thus less
useful.} $p=\frac1n$ and prove that the expected number of steps
used by the ($1+1$)-EA to reach optimum for \textsc{OneMax} function
satisfies
\begin{align}\label{c1c2}
    \mathbb{E}(X_n) = en\log n +c_1 n+\tfrac12e\log n +c_2
    +O\left(n^{-1}\log n\right),
\end{align}
where $c_1$ and $c_2$ are explicitly computable constants. More
precisely,
\begin{align*}
    c_1 = -e\left(\log 2 -\gamma -
    \phi_1\left(\tfrac12\right)\right)
    \approx 1.89254 \,17883\,44686\,82302\,25714\dots,
\end{align*}
where $\gamma$ is Euler's constant,
\begin{align}\label{phi1-z}
    \phi_1(z) := \int_0^{z}
    \left(\frac1{S_1(t)}-\frac1t\right)\dd t,
\end{align}
with $S_1(z)$ an entire function defined by
\begin{align*}
    S_1(z) := \sum_{\ell\ge1}\frac{z^\ell}{\ell!}
    \sum_{0\le j<\ell} (\ell-j)\frac{(1-z)^j}{j!}.
\end{align*}
See \eqref{c2} for an analytic expression and numerical value for
$c_2$.

Note that these expressions, as well as the numerical value, are
consistent with those given in \cite{GKS99}. Finer properties such
as more precise expansions for $\mathbb{E}(X_n)$, the variance and
limiting distribution will also be established. The extension to
$p=\frac cn$ does not lead to additional new phenomena as already
discussed in \cite{GKS99}; it is thus omitted in this paper.

Our approach relies essentially on the asymptotic resolution of the
underlying recurrence relation for the optimization time and the
method of proof is different from all previous approaches (including
Markov chains, coupon collection, drift analysis, etc.). More
precisely, we consider $f(\mathbf{x}) = \sum_{1\le j\le n}x_j$ and
study the random variables $X_{n,m}$, which counts the number of
steps used by ($1+1$)-EA before reaching the optimum state
$f(\mathbf{x})=n$ when starting from $f(\mathbf{x})=n-m$. We will
derive very precise asymptotic approximations for each $X_{n,m}$,
$1\le m\le n$. In particular, the distribution of $X_{n,m}$ is for
large $n$ well approximated by a sum of $m$ exponential
distributions, and this in turn implies a Gumbel limit law when
$m\to\infty$; see Table~\ref{tab:max-lead} for a summary of our
major results.

In addition to its own methodological merit of obtaining stronger
asymptotic approximations and potential use in other problems in EA
of similar nature, our approach, to the best of our knowledge,
provides the first rigorous justification of Garnier et al.'s
far-reaching results \cite{GKS99} fifteen years ago.

Although the results for linear functions strongly support the
efficiency of $(1+1)$-EA, there exist several hard instances; for
example, functions with $\Theta (n^{n})$ expected time complexity
for $(1+1)$-EA were constructed in Droste et al. \cite{DJW02}, while
a naive complete search requires only $2^n$ to find out the global
optimum under an arbitrary function. Along another direction, long
path problems were introduced by Horn et al. \cite{HGD94}, and
examined in detail in Rudolph \cite{R97}; in particular, he studied
long $k$-paths problems (short-cuts all having distances at least
$k$) and proved that the expected time to reach optimum is
$O(k^{-1}n^{k+1})$. Droste et al.\ \cite{DJW02} then derived an
exponential time bound when $k=\sqrt{n-1}$.

The ($1+1$)-EA is basically a randomized hill-climbing heuristic and
cannot replace the crossover operator. Jansen and Wegener
\cite{JW05} showed a polynomial time for an EA using both mutation
and crossover, while $(1 + 1)$-EA necessitates exponential running
times. A more recent natural example \cite{DJKNT13} is the all-pairs
shortest path problem for which an EA using crossover reaches an
$O(n^3\log n)$ expected time bound, while $(1+1)$-EA needs a higher
cost $\Theta(n^{4})$.

This paper is organized as follows. We begin with deriving the
recurrence relation satisfied by the random variables $X_{n,m}$
(when the initial configuration is not random). From this
recurrence, it is straightforward to characterize inductively the
distribution of $X_{n,m}$ for small $1\le m=O(1)$. The hard case
when $m\to\infty, m\le n$ requires the development of more
asymptotic tools, which we elaborate in
Section~\ref{sec:Asymptotic_sum}. Asymptotics of the mean values of
$X_{n,m}$ and $X_n$ are presented in Section~\ref{sec:Exp} with a
complete error analysis and extension to a full asymptotic
expansion. Section~\ref{sec:Var} then addresses the asymptotics of
the variance. Limit laws are established in Section~\ref{sec:LL} by
an inductive argument and fine error analysis. Finally, we consider
in Section~\ref{sec:LO} the complexity of the $(1+1)$-EA using the
number of leading ones as the fitness function. Denote the
corresponding cost measure by $Y_n$ and $Y_{n,m}$, respectively. We
summarize our major results in the following table.
\begin{small}
\begin{table}[!ht]
\begin{center}
\begin{tabular}{|c||c|c|}\hline
$m$ & $\begin{array}{c} \textsc{OneMax}\\
m=O(1)\!: \text{Sum of Exp (Thm \ref{thm:YO1})}\\
m\to\infty\!: \text{Gumbel (Thm \ref{thm:Y-all})}
\end{array}$
& $\begin{array}{c} \textsc{LeadingOnes} \\
m=O(1)\!: \text{Mixture of Gamma (Thm \ref{thm:XO1})}\\
m\to\infty\!: \text{Normal (Thm \ref{thm:X-all})}
\end{array}$\\ \hline\hline
$O(1)$ & $\begin{array}{c}
\mathbb{P}\left(\frac{X_{n,m}}{en}\le x
\right) \to \left(1-e^{-x}\right)^m \end{array}$ &
$\begin{array}{c}
\mathbb{P}\left(\frac{Y_{n,m}}{en}\le x\right)
\to \!\! \sum\limits_{0\le j<m}\!\!\!\frac{\binom{m-1}{j}}{2^{m-1}}
\!\int_0^xe^{-t}\frac{t^j}{j!} \dd t
\end{array}$ \\ \hline
$\begin{array}{c}\to\infty \\ \le n \end{array}$ &
$\begin{array}{l}
\mathbb{P}\left(\frac{X_{n,m}}{en}-\log m-\phi_1(\tfrac mn)
\le x\right) \\
\qquad \to e^{-e^{-x}} \end{array}$
& $\mathbb{P}\left(\frac{Y_{n,m}-\nu_{n,m}}
{\varsigma_{n,m}}\le x\right)\to \frac1{\sqrt{2\pi}}
\int_{-\infty}^x e^{-\frac{t^2}2}\dd t$ \\ \hline
\end{tabular}
\end{center}
\vspace*{-.3cm} \caption{\emph{The limit laws for the number of
stages used by Algorithm $(1+1)$-EA under \textsc{OneMax}
($X_{n,m}$) and \textsc{LeadingOnes} ($Y_{n,m}$) fitness function,
respectively, when starting from the initial state with the
evaluation $n-m$. The function $\phi_1$ is defined in
\eqref{phi1-z}, and the two quantities $\nu_{n,m}$ and
$\varsigma_{n,m}$ are given in \eqref{mu-exact} and \eqref{V-exact},
respectively.}} \label{tab:max-lead}
\end{table}
\end{small}

\section{Recurrence and the limit laws when $m=O(1)$}

Recall that we start from the initial state $f(\mathbf{x})=n-m$ and
that $X_{n,m}$ denotes the number of steps used by ($1+1$)-EA before
reaching $f(\mathbf{x})=n$. We derive first a recurrence relation
satisfied by the probability generating function $P_{n,m}(t) :=
\mathbb{E}(t^{X_{n,m}})$ of $X_{n,m}$.

\begin{lmm} The probability generating function $P_{n,m}(t)$
satisfies the recurrence
\begin{align} \label{Qnmt}
    P_{n,m}(t) = \frac{t\sum_{1\le \ell \le m}
    \lambda_{n,m,\ell} P_{n,m-\ell}(t)}{\displaystyle
    1-\left(1-\sum_{1\le \ell \le m}
    \lambda_{n,m,\ell}\right)t}
    \qquad(1\le m\le n),
\end{align}
for $1\le m\le n$, with $P_{n,0}(t)=1$, where
\begin{align} \label{lambda}
    \lambda_{n,m,\ell} := \left(1-\frac1n\right)^n
    (n-1)^{-\ell} \sum_{0\le j\le \min\{n-m,m-\ell\}}
    \binom{n-m}{j}\binom{m}{j+\ell}(n-1)^{-2j}.
\end{align}
\end{lmm}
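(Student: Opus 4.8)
The plan is a first-step analysis of the Markov chain underlying the algorithm, with the state recorded as the number of zeros in the current string $\mathbf{x}$; thus the initial configuration $f(\mathbf{x})=n-m$ corresponds to $m$ zeros, and the optimum $f(\mathbf{x})=n$ is the absorbing state with $0$ zeros. The structural fact driving the whole argument is that the acceptance rule $f(\mathbf{y})\ge f(\mathbf{x})$ makes the number of zeros non-increasing: if a single mutation flips $j$ of the current $n-m$ ones into zeros and $k$ of the current $m$ zeros into ones, then $f(\mathbf{y})-f(\mathbf{x})=k-j$, so $\mathbf{y}$ is accepted exactly when $k\ge j$, and in that case the number of zeros decreases by exactly $\ell:=k-j\ge 0$. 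Consequently, from state $m$ one mutation step either moves the chain to some state $m-\ell$ with $1\le\ell\le m$, or leaves it at $m$ (when $k<j$, i.e.\ the move is rejected, or when $k=j$, i.e.\ the move is accepted but sterile).

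The first computation is the one-step probability $\lambda_{n,m,\ell}$ of a net decrease of exactly $\ell\ge 1$. Decomposing over the number $j$ of ones that are flipped, a prescribed set of $j$ ones and $j+\ell$ zeros (and no other bit) is flipped with probability $p^{2j+\ell}(1-p)^{n-2j-\ell}$, and there are $\binom{n-m}{j}\binom{m}{j+\ell}$ such prescriptions, with $j$ constrained to $0\le j\le\min\{n-m,m-\ell\}$ by the number of available bits. Putting $p=\frac1n$ and simplifying
\[
    p^{2j+\ell}(1-p)^{n-2j-\ell}
    =n^{-n}(n-1)^{n-2j-\ell}
    =\left(1-\tfrac1n\right)^{n}(n-1)^{-\ell}(n-1)^{-2j}
\]
turns the sum over $j$ into precisely the expression \eqref{lambda}.

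With the transition probabilities in hand I would condition on the outcome of the first mutation. Writing $q_{n,m}:=1-\sum_{1\le\ell\le m}\lambda_{n,m,\ell}$ for the self-loop probability, the first-step decomposition reads
\[
    X_{n,m}\overset{d}{=}1+\begin{cases}
        X_{n,m-\ell} & \text{with probability }\lambda_{n,m,\ell},\ 1\le\ell\le m,\\
        X'_{n,m} & \text{with probability }q_{n,m},
    \end{cases}
\]
where $X'_{n,m}$ is an independent copy of $X_{n,m}$ and $X_{n,0}=0$ since the optimum is absorbing. Taking probability generating functions and using independence gives $P_{n,m}(t)=t\bigl(\sum_{1\le\ell\le m}\lambda_{n,m,\ell}P_{n,m-\ell}(t)+q_{n,m}P_{n,m}(t)\bigr)$; since $0\le q_{n,m}<1$ for $m\ge 1$ (a single zero-flip already produces a productive move), the factor $1-q_{n,m}t$ is nonzero for $|t|\le 1$, and solving the linear equation for $P_{n,m}(t)$ yields \eqref{Qnmt}. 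The denominator $1-q_{n,m}t$ can also be read off directly: the number of sterile steps preceding each productive move is geometric with success probability $1-q_{n,m}$.

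The work here is combinatorial bookkeeping rather than analysis, and that is also where the only real care is needed: one must check that the event ``the number of zeros decreases by $\ell$'' is the disjoint union over $j$ of the events ``flip $j$ ones and $j+\ell$ zeros'', that the range of $j$ is correctly truncated by the available bits, and above all that accepted-but-sterile moves ($k=j$) are absorbed into the self-loop term $q_{n,m}$ and not double-counted among the $\lambda_{n,m,\ell}$. Once these points are settled, \eqref{Qnmt} and \eqref{lambda} follow by the routine manipulations above.
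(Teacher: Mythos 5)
Your proposal is correct and follows essentially the same route as the paper: compute the one-step transition probabilities by summing over the number $j$ of flipped ones, then condition on the first mutation step and solve the resulting linear equation for $P_{n,m}(t)$. The only additions are minor but sound refinements (the explicit check that the self-loop probability is strictly less than one, and the remark that accepted-but-sterile moves belong to the self-loop term), which the paper leaves implicit.
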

\begin{proof}
Start from the state $f(\mathbf{x}) = n-m$ and run the two steps
inside the loop of Algorithm ($1+1$)-EA. The new state becomes
$\mathbf{y}$ with $f(\mathbf{y}) = n-m+\ell$ if $j$ bits in the
group $\{x_i = 1\}$ and $j+\ell$ bits in the other group $\{x_i
=0\}$ toggled their values, where $0\le j\le \max\{n-m,m-\ell\}$ and
$\ell>0$. Thus, the probability from state $\mathbf{x}$ to
$\mathbf{y}$ is given by
\begin{align*}
    \lambda_{n,m,\ell}
    = \sum_{0\le j\le \min\{n-m,m-\ell\}}
    \binom{n-m}{j}\left(\frac1n\right)^j
    \left(1-\frac1n\right)^{n-m-j}
    \binom{m}{j+\ell}\left(\frac1n\right)^{j+\ell}
    \left(1-\frac1n\right)^{m-j-\ell},
\end{align*}
which is identical to \eqref{lambda}. We then obtain
\[
    P_{n,m}(t) = t\sum_{1\le \ell \le m} \lambda_{n,m,\ell}
    P_{n,m-\ell}(t)+\left(1-\sum_{1\le \ell \le m}
    \lambda_{n,m,\ell}\right)tP_{n,m}(t),
\]
and this proves the lemma.
\end{proof}
While this simple recurrence relation seems not new in the EA
literature, tools have been lacking for a direct asymptotic
resolution, which we will develop in detail in this paper.

For convenience, define
\begin{align*}
    \Lambda_{n,m} := \sum_{1\le \ell \le m} \lambda_{n,m,\ell}.
\end{align*}

In particular, when $m=1$,
\[
    \Lambda_{n,1} =
    \lambda_{n,1,1} = \frac1n\left(1-\frac1n\right)^{n-1},
\]
so that
\[
    P_{n,1}(t) = \frac{\frac1n\left(1-\frac1n\right)^{n-1}t}
    {1-\left(1-\frac1n\left(1-\frac1n\right)^{n-1}\right)t}.
\]
This is a standard geometric distribution $\text{Geo}(\rho)$ with
probability $\rho =\frac1n\left(1-\frac1n\right)^{n-1}$ (assuming
only positive integer values). Obviously, taking $t=
e^{\frac{s}{en}}$, we obtain
\[
    P_{n,1}\left(e^{\frac{s}{en}}\right)= \frac{1}{1-s}
    \left(1+O\left(\frac1{n|1-s|}\right)\right),
\]
as $n\to\infty$, uniformly for $|s|\le 1-\ve$, implying, by
Curtiss's convergence theorem (see \cite[\S 5.2.3]{HMC13}), the
\emph{convergence in distribution}
\[
    \frac{X_{n,1}}{en}
    \xrightarrow{(d)} \text{Exp}(1),
\]
where $\text{Exp}(c)$ denotes an exponential distribution with
parameter $c$. Equivalently, this can be rewritten as
\[
    \lim_{n\to\infty}\mathbb{P}
    \left(\frac{X_{n,1}}{en} \le x\right) = 1-e^{-x},
\]
for $x>0$. Such a limit law indeed extends to the case when
$m=O(1)$, which we formulate as follows.

\begin{figure}[!ht]
\begin{center}
\includegraphics[width=3.5cm]{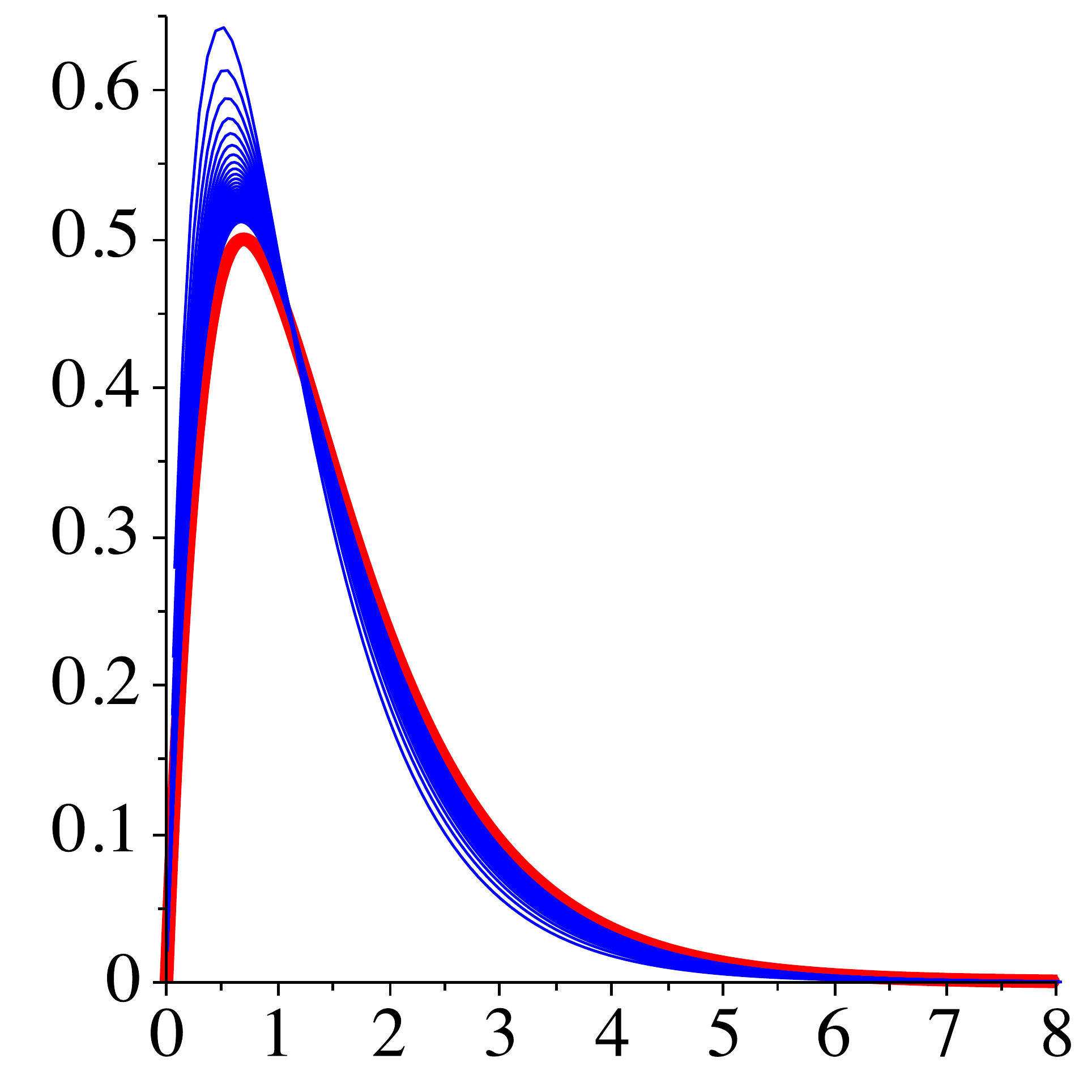}
\includegraphics[width=3.5cm]{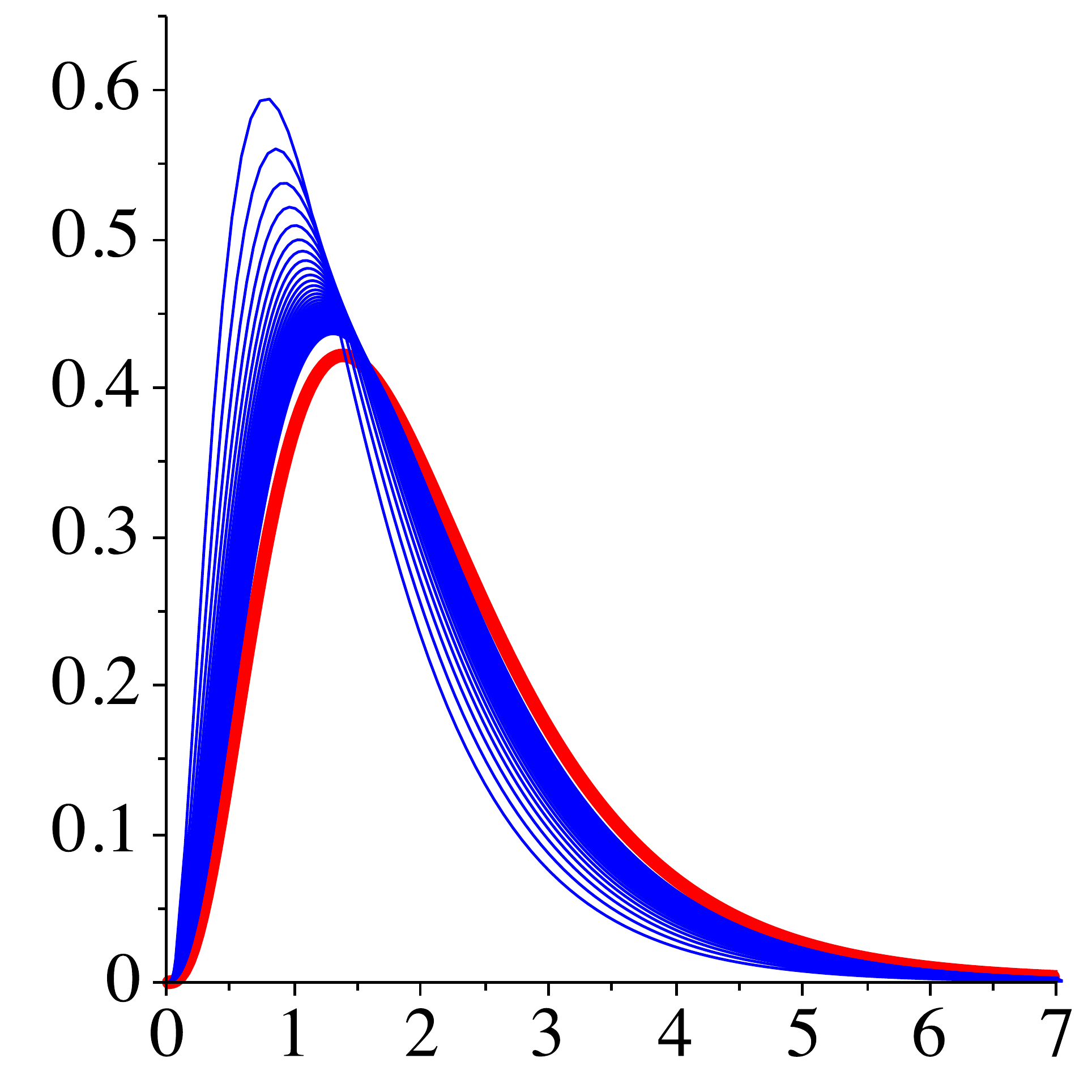}
\includegraphics[width=3.5cm]{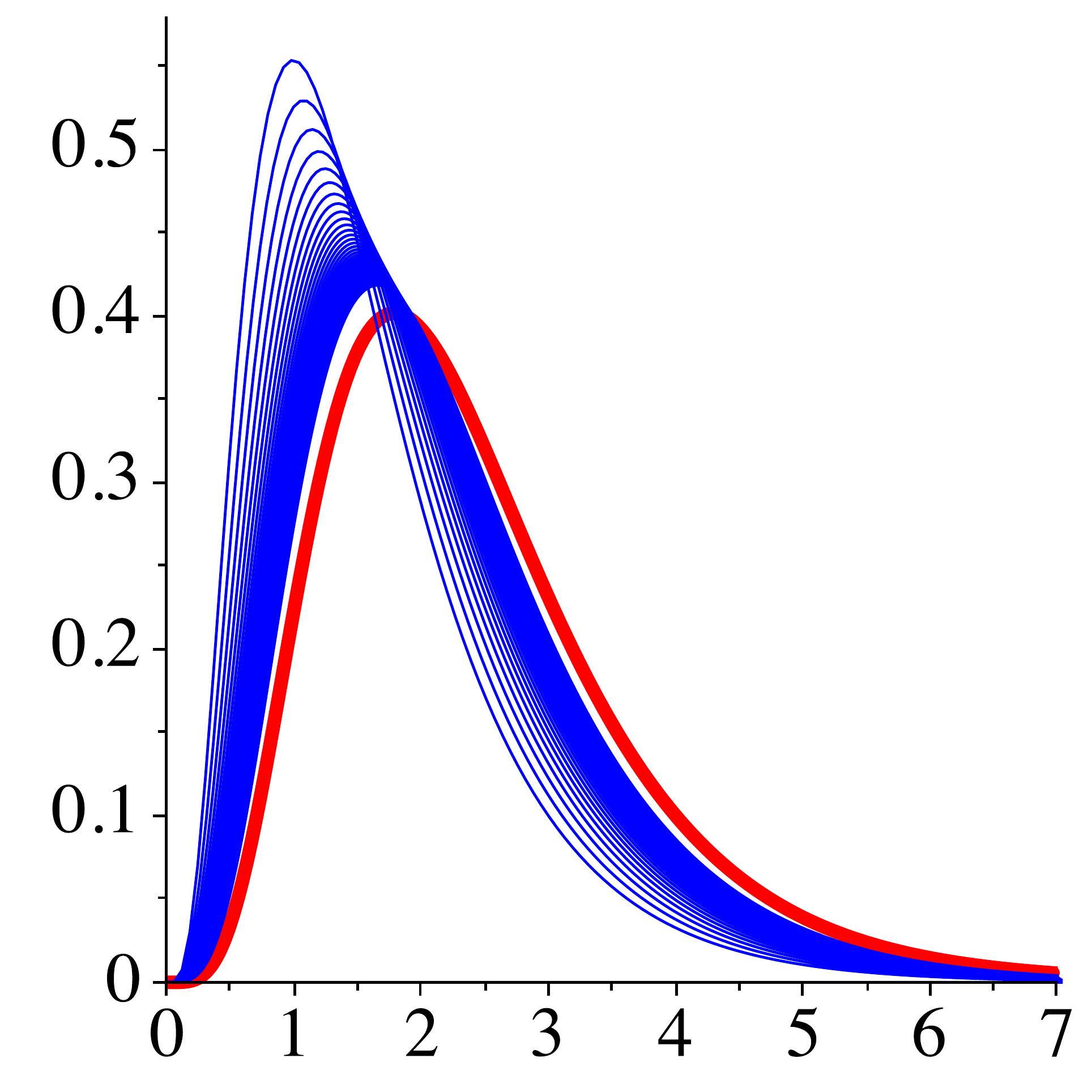}
\includegraphics[width=3.5cm]{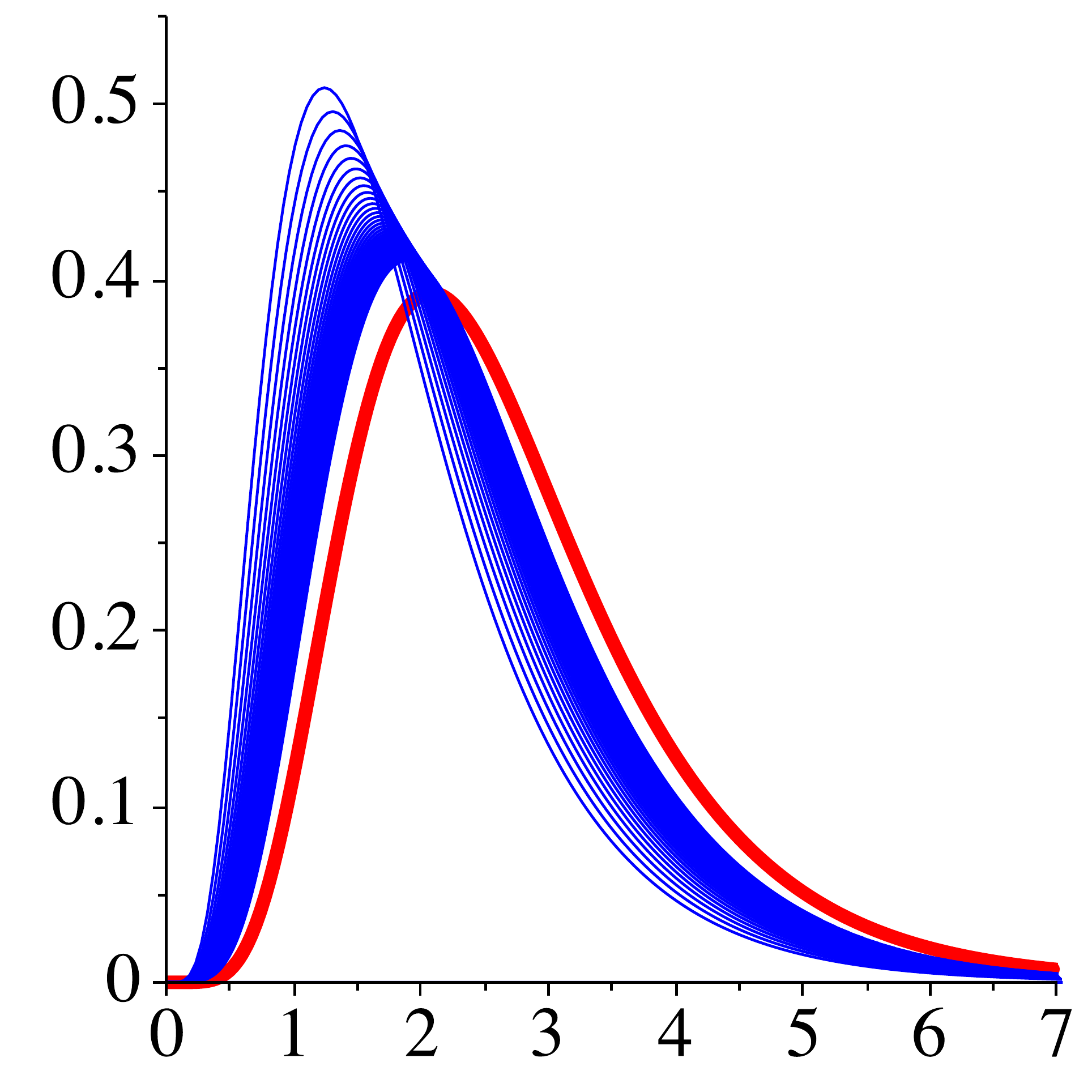}
\end{center}
\vspace*{-.4cm} \caption{\emph{Histograms of $X_{n,2j}/en$ for
$j=1,\dots,4$ (in left to right order) and $n=5,\dots,50$, and their
corresponding limit laws.}}
\end{figure}

Let $H_m^{(i)}=\sum_{1\le j\le m}j^{-i}$ denote the $i$-th order
harmonic numbers and $H_m=H_m^{(1)}$. For convenience, we define
$H_0^{(i)}=0$.
\begin{thm} \label{thm:YO1}
If $m=O(1)$, the time used by ($1+1$)-EA to reach the optimum state
$f(\mathbf{x})= n$, when starting from $f(\mathbf{x}) = n-m$,
converges, when normalized by $en$, to a sum of $m$ exponential
random variables
\begin{align}\label{Y-mO1}
    \frac{X_{n,m}}{en}
    \xrightarrow{(d)} \sum_{1\le r\le m}
    \mathrm{Exp}(r),
\end{align}
with mean asymptotic to $eH_m n$ and variance asymptotic to
$e^2H_m^{(2)}n^2$.
\end{thm}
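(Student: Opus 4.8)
The plan is to prove \eqref{Y-mO1} by induction on $m$, using the recurrence \eqref{Qnmt} together with the scaling $t = e^{s/(en)}$, exactly as was done for the base case $m=1$ in the text. The target is to show that, uniformly for $s$ in a suitable complex neighborhood of the origin (say $|s|\le 1-\varepsilon$ after a slight reparametrization to accommodate the poles at $s = 1, 2, \dots, m$), one has
\begin{align*}
    P_{n,m}\!\left(e^{s/(en)}\right)
    = \prod_{1\le r\le m}\frac{r}{r-s}
    \left(1 + O\!\left(\tfrac1n\right)\right),
\end{align*}
since $\prod_{1\le r\le m} r/(r-s)$ is precisely the moment generating function (Laplace transform) of $\sum_{1\le r\le m}\mathrm{Exp}(r)$; Curtiss's theorem then delivers the convergence in distribution. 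The first step is therefore to pin down the asymptotics of the coefficients $\lambda_{n,m,\ell}$ for fixed $m,\ell$: from \eqref{lambda}, the $j=0$ term dominates, giving $\lambda_{n,m,\ell} = \binom{m}{\ell}n^{-\ell}(1-\tfrac1n)^n(1-\tfrac1n)^{m-\ell} + O(n^{-\ell-2})$, so that $\lambda_{n,m,1} = \tfrac{m}{en}(1 + O(\tfrac1n))$ and $\lambda_{n,m,\ell} = O(n^{-\ell})$ for $\ell\ge2$; consequently $\Lambda_{n,m} = \tfrac{m}{en}(1+O(\tfrac1n))$ as well.

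The second step feeds these estimates into \eqref{Qnmt}. Writing $t = e^{s/(en)} = 1 + \tfrac{s}{en} + O(n^{-2})$, the denominator becomes $1 - (1-\Lambda_{n,m})t = \Lambda_{n,m} - \tfrac{s}{en}(1-\Lambda_{n,m}) + O(n^{-2}) = \tfrac{m-s}{en}(1 + O(\tfrac1n))$, uniformly while $|m-s|$ stays bounded away from $0$. In the numerator, only the $\ell=1$ term survives to leading order because $\lambda_{n,m,\ell}$ for $\ell\ge2$ is $O(n^{-2})$ while the denominator is $\Theta(n^{-1})$; thus
\begin{align*}
    P_{n,m}\!\left(e^{s/(en)}\right)
    = \frac{\tfrac{m}{en}\,P_{n,m-1}\!\left(e^{s/(en)}\right)
    \left(1 + O(\tfrac1n)\right)}
    {\tfrac{m-s}{en}\left(1 + O(\tfrac1n)\right)}
    = \frac{m}{m-s}\,P_{n,m-1}\!\left(e^{s/(en)}\right)
    \left(1 + O\!\left(\tfrac1n\right)\right).
\end{align*}
Invoking the induction hypothesis $P_{n,m-1}(e^{s/(en)}) = \prod_{1\le r\le m-1}\tfrac{r}{r-s}(1+O(\tfrac1n))$ closes the induction and yields the claimed product formula. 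The moment statements follow either by reading off $\mathbb{E}\big(\sum_r \mathrm{Exp}(r)\big) = \sum_{1\le r\le m} r^{-1} = H_m$ and $\mathrm{Var}\big(\sum_r \mathrm{Exp}(r)\big) = \sum_{1\le r\le m} r^{-2} = H_m^{(2)}$ from the limit, or more carefully by differentiating the exact recurrence \eqref{Qnmt} at $t=1$ to get recurrences for $\mathbb{E}(X_{n,m})$ and the second factorial moment and solving them asymptotically; the former suffices for the stated ``asymptotic to'' claims once one also checks uniform integrability (automatic here since the MGF converges in a fixed neighborhood of $0$).

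The main obstacle is the uniformity of the error terms. The factors $\tfrac1{m-s}$, and more generally the poles of the limit at $s = 1,\dots,m$, mean the approximation degrades near those points; one must either restrict to a region like $\{|s|\le 1-\varepsilon\}$ (which is all Curtiss's theorem needs, since pointwise convergence of the MGF in a neighborhood of $0$ implies convergence in distribution) or track the factor $|m-s|^{-1}$ explicitly in the error, writing the relative error as $O\big(\tfrac{1}{n|m-s|}\big)$ in analogy with the $m=1$ display in the text. Because $m = O(1)$, there are only finitely many such poles and the bookkeeping stays bounded, so this is a genuine but routine difficulty rather than a deep one; the harder, genuinely new analysis is deferred to the case $m\to\infty$ treated later in the paper.
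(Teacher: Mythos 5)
Your proposal is correct and follows essentially the same route as the paper: both extract the leading behavior of $\lambda_{n,m,\ell}$ (the $j=0$ term, so that only $\ell=1$ contributes at order $n^{-1}$), feed this into the recurrence \eqref{Qnmt} with $t=e^{s/(en)}$ to obtain $P_{n,m}\sim\frac{m}{m-s}P_{n,m-1}$ by induction, and conclude via Curtiss's theorem, with the moments read off from the uniform convergence of the moment generating function. The only cosmetic difference is that the paper phrases the moment computation in terms of the Quasi-Power framework rather than uniform integrability, which amounts to the same uniformity requirement you identify.
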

The convergence in distribution \eqref{Y-mO1} can be expressed
alternatively as
\[
    \lim_{n\to\infty}\mathbb{P}\left(\frac{X_{n,m}}{en}\le x
    \right) = \left(1-e^{-x}\right)^m \qquad(x>0).
\]
\begin{proof}
By the sum definition of $\lambda_{n,m,\ell}$, we see that
\begin{align}
    \lambda_{n,m,\ell} &= \left(1-\frac1n\right)^n
    (n-1)^{-\ell} \sum_{0\le j\le \min\{n-m,m-\ell\}}
    \binom{m}{j+\ell}\binom{n-m}{j}(n-1)^{-2j}\nonumber \\
    &= \binom{m}{\ell} e^{-1}
    n^{-\ell} \left(1+O\left(\frac{(n-m)(m-\ell)}{n^2\ell}
    \right)\right), \label{mO1}
\end{align}
where the $O$-term holds uniformly for $1\le m=o(n)$ and $1\le
\ell\le m$. In particular, for each fixed $m=O(1)$, we then have
\[
    P_{n,m}(t) = \frac{\frac{m}{en}t}
    {1-\left(1-\frac{m}{en}\right)t}\, P_{n,m-1}(t)
    (1+o(1)),
\]
so that
\[
    P_{n,m}(t) = \left(\prod_{1\le r\le m}
    \frac{\frac{r}{en}t}{1-\left(1-\frac{r}{en}\right)t}\right)
    (1+o(1)),
\]
where both $o(1)$-terms are uniform for $|s|\le 1-\ve$. Now take
$t=e^{\frac{s}{en}}$. Then
\begin{align}\label{Qnm-m-bdd}
    P_{n,m}\left(e^{\frac{s}{en}}\right)
    = \left(\prod_{1\le r\le m}
    \frac1{1-\frac{s}{r}}\right)(1+o(1)),
\end{align}
uniformly for $|s|\le 1-\ve$. This and Curtiss's convergence theorem
(see \cite[\S 5.2.3]{HMC13}) imply \eqref{Y-mO1}. The asymptotic
mean and the asymptotic variance can be computed either by a similar
inductive argument or by following ideas used in the Quasi-Power
Framework (see \cite{FS09} or \cite{Hwang98}) that relies on the
\emph{uniformity} of the estimate \eqref{Qnm-m-bdd}
\[
    \frac{\mathbb{E}(X_{n,m})}{en}
    = [s] P_{n,m}\left(e^{\frac{s}{en}}\right)
    \sim [s]\prod_{1\le r\le m}\frac1{1-\frac{s}{r}}
    = H_m,
\]
where $[s^k]f(s)$ denotes the coefficient of $s^k$ in the Taylor
expansion of $f(s)$, and
\begin{align*}
    \frac{\mathbb{V}(X_{n,m})}{(en)^2}
    &= 2[s^2] \log P_{n,m}\left(e^{\frac{s}{en}}\right)\\
    &\sim 2 [s^2] \sum_{1\le r\le m} \log\frac1{1-\frac sr}\\
    &= H_m^{(2)}.
\end{align*}
We will derive more precise expansions below by a direct approach.
\end{proof}

The simple inductive argument fails when $m\to\infty$ and we need
more uniform estimates for the error terms.

\section{Asymptotics of sums of the form
$\sum_{1\le \ell \le m}  a_\ell
\lambda_{n,m,\ell}$\label{sec:Asymptotic_sum}}

Sums of the form
\[
    \sum_{1\le \ell \le m} a_\ell \lambda_{n,m,\ell}
\]
appear frequently in our analysis. We thus digress in this section
to develop tools for deriving the asymptotic behaviors of such sums.

For technical simplicity, we define the sequence $e_n :=
\left(1-\frac1{n+1}\right)^{n+1}$ and the normalized sum
\begin{align}\label{lmbd-star}
    \lambda_{n,m,\ell}^* := \frac{\lambda_{n+1,m,\ell}}{e_n}
    = \sum_{0\le j\le \min\{n+1-m,m-\ell\}}
    \binom{n+1-m}{j}\binom{m}{j+\ell}n^{-\ell-2j}.
\end{align}
Let also
\[
    A_{n,m}^* := \sum_{1\le \ell \le m}
    a_\ell \lambda_{n,m,\ell}^*.
\]
Throughout this paper, we use the abbreviation
\[
    \alpha := \frac{m}{n}.
\]

\paragraph{Asymptotics of $A_{n,m}^*$.}
Observe that most contribution to $A_{n,m}$ comes from small $\ell$,
say $\ell = o(m)$, provided that $a_\ell$ does not grow too fast;
see \eqref{mO1}. We formulate a more precise version as follows.

\begin{lmm} Assume that $\{a_\ell\}_{\ell\ge1}$ is a
given sequence such that $A(z) = \sum_{\ell\ge1}a_\ell z^{\ell-1}$
has a nonzero radius of convergence in the $z$-plane. Then
\begin{align}\label{b-ell}
    A_{n,m}^*
    = \tilde{A}_0(\alpha) +\frac{\tilde{A}_1(\alpha)}{n}
    + O\left(\alpha n^{-2}\right),
\end{align}
where $\tilde{A}_0(\alpha)$ and $\tilde{A}_1(\alpha)$ are entire
functions of $\alpha$ defined by
\begin{align}\label{A0}
    \tilde{A}_0(\alpha) :=
    \sum_{\ell\ge1}\frac{\alpha^\ell}{\ell!}
    \sum_{0\le j<\ell}a_{\ell-j}\frac{(1-\alpha)^j}{j!},
\end{align}
and ($a_0:=0$)
\begin{align}\label{A1}
    \tilde{A}_1(\alpha) := -\frac12
    \sum_{\ell\ge1} \frac{\alpha^\ell}{\ell!}
    \sum_{0\le j<\ell} \frac{(1-\alpha)^j}{j!}
    \left((\ell-j)a_{\ell+1-j}-(\ell+2-j)a_{\ell-1-j}
    +a_{\ell-j}\right).
\end{align}
\end{lmm}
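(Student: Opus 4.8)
The plan is to expand the binomial summand of $\lambda_{n,m,\ell}^*$ as a power series in $1/n$ with a uniform error term, substitute into $A_{n,m}^* = \sum_{\ell} a_\ell \lambda_{n,m,\ell}^*$, and resum. First I would rewrite
\[
    \binom{n+1-m}{j} n^{-2j}
    = \frac{(n-m+1)(n-m)\cdots(n-m+2-j)}{j!\,n^{2j}}
    = \frac{(1-\alpha)^j}{j!}\left(1+\frac{c_{1,j}(\alpha)}{n}
    + O\!\left(\frac{j^4}{n^2}\right)\right),
\]
where $c_{1,j}(\alpha)$ is an explicit polynomial in $j$ (linear combination of $j$ and $j^2$, with $\alpha$-dependent coefficients) coming from the falling-factorial correction; similarly $\binom{m}{j+\ell} n^{-\ell-j}$ contributes a factor $\tfrac{\alpha^{\ell+j}}{(\ell+j)!}$ times a $1 + O(1/n)$ correction that is again polynomial in $\ell+j$. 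Collecting the order-$1$ terms and setting $k = \ell+j$ gives the inner sum $\sum_{0\le j<\ell} a_{\ell-j}(1-\alpha)^j/j!$ after relabelling, which is exactly $\tilde A_0(\alpha)$ in \eqref{A0}; the order-$1/n$ correction, after the (slightly tedious) bookkeeping of the polynomial factors and a shift of summation index to realign $a_{\ell+1-j}$, $a_{\ell-1-j}$ against $a_{\ell-j}$, produces $\tilde A_1(\alpha)$ in the form \eqref{A1}.

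The key structural point that makes the resummation legitimate is the \emph{analyticity hypothesis} on $A(z) = \sum_{\ell\ge1} a_\ell z^{\ell-1}$: with radius of convergence $\rho>0$ we have $|a_\ell| \le C R^\ell$ for any fixed $R>1/\rho$ and $C=C(R)$. This gives two things at once. It guarantees that the formal series $\tilde A_0(\alpha)$ and $\tilde A_1(\alpha)$ — whose $\ell$-th terms are bounded by $C R^\ell(1+|\alpha|)^\ell(1+|1-\alpha|)^\ell/\ell!$ times a polynomial in $\ell$ — converge for every $\alpha$, hence are entire. And it lets me control the tail of the sum over $\ell$: the contribution of the terms with $\ell \ge \ell_0$ for suitable $\ell_0 = \ell_0(n)$ (say $\ell_0 \asymp \log n$) is super-polynomially small, so it suffices to establish the expansion termwise for $\ell \le \ell_0$ with an error per term that is $O(j^4 R^\ell/n^2)$ summed against $\alpha^\ell/\ell!$, which is $O(\alpha n^{-2})$ after noting that the smallest power of $\alpha$ appearing is $\alpha^1$ (the $\ell=1$ term) and the factorial beats everything else.

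The main obstacle I expect is not any single estimate but the \emph{uniformity} in $\alpha$ together with getting the $1/n$ coefficient exactly right. For the error bound $O(\alpha n^{-2})$ one must check that the implied constant does not blow up as $\alpha \to 1$ (where $1-\alpha$ and $\alpha$ compete) or as $\alpha$ grows — the cancellation in the falling factorial $(n-m+1)\cdots(n-m+2-j)$ has to be tracked carefully when $j$ is comparable to $n-m$, but the range $j \le \min\{n+1-m, m-\ell\}$ and the factorial damping keep this in check. For the exact form of $\tilde A_1$, the delicate part is that the per-$j$ correction factor multiplies $a_{\ell-j}$, but after re-indexing ($j \mapsto j\pm 1$ in the appropriate pieces) the polynomial-in-$j$ coefficients recombine into the coefficients $(\ell-j)$, $-(\ell+2-j)$, $+1$ attached to $a_{\ell+1-j}$, $a_{\ell-1-j}$, $a_{\ell-j}$ respectively; verifying this algebraic identity — essentially matching a second-difference-type operator — is the computational heart of the argument, and I would carry it out by expanding $\binom{n-m+1}{j}\binom{m}{j+\ell} n^{-\ell-2j} = \tfrac{(1-\alpha)^j}{j!}\tfrac{\alpha^{\ell+j}}{(\ell+j)!}\exp\!\big(\tfrac{P_{j,\ell}(\alpha)}{n} + O(n^{-2})\big)$ with $P_{j,\ell}$ an explicit quadratic, then reading off the $n^{-1}$ term of the double sum.
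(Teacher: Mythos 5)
Your proposal takes a genuinely different route from the paper's. The paper carries out the elementary termwise expansion only far enough to identify the leading term $\tilde{A}_0$, then explicitly abandons that route (``the details are rather messy, notably the error analysis'') in favour of an analytic one: it writes $\lambda_{n,m,\ell}^*$ as the Cauchy coefficient $[z^{m-\ell}]\left(z+\frac1n\right)^m\left(1+\frac zn\right)^{n+1-m}$, so that $A_{n,m}^*$ becomes a single contour integral of $A(z)$ against the kernel $\left(1+\frac1{nz}\right)^m\left(1+\frac zn\right)^{n+1-m}$ over a fixed circle $|z|=c$ inside the disk of convergence of $A$. One uniform expansion of this kernel, $e^{\frac{\alpha}{z}+(1-\alpha)z}\bigl(1-\frac1{2n}((1-\alpha)z^2-2z+\alpha z^{-2})+O(n^{-2})\bigr)$ on the contour, then yields $\tilde{A}_0$ and $\tilde{A}_1$ as contour integrals, and the explicit form \eqref{A1} drops out of a single integration by parts (see \eqref{A1a}). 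That packaging is precisely what spares the paper the two things your plan must do by hand: a uniform error analysis over the whole range $1\le m\le n$, and the recombination of the $1/n$-corrections into the second-difference form \eqref{A1}. Your proposal defers both (you call the latter the ``computational heart''), so as written it is a plan rather than a proof; with effort the plan would succeed, but the deferred work is exactly the part the paper judged too delicate to do directly.

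Two places where your error accounting, as stated, does not yet close. First, the support mismatch for small $m$: after your relabelling $k=j+\ell$, the series \eqref{A0} and \eqref{A1} run over all pairs $(\ell,j)$ with $\ell\ge1$, $j\ge0$, whereas the exact sum has $j\le\min\{n+1-m,m-\ell\}$ and $\ell\le m$. The extra terms are not individually $O(\alpha n^{-2})$: for $m=1$ the $(\ell,j)=(2,0)$ term of $\tilde{A}_0(\alpha)$ is $\frac{a_2}{2}\alpha^2=\frac{a_2}{2}\,\alpha n^{-1}$, and \eqref{b-ell} only holds because it cancels against the matching term of $\tilde{A}_1(\alpha)/n$. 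Your claim that a termwise error $O(j^4R^\ell/n^2)$ summed against $\alpha^\ell/\ell!$ gives $O(\alpha n^{-2})$ tacitly assumes the exact and limiting sums have the same index set; an extra cancellation argument (or the observation that $\binom{m}{j+\ell}n^{-j-\ell}$ is an exact polynomial in $1/n$ that vanishes identically for $j+\ell>m$) is needed. Second, uniformity as $\alpha\to1$: the natural expansion parameter for $\binom{n+1-m}{j}n^{-j}$ is $1/(n-m)$, so the relative correction is $O(j^4/(n-m)^2)$, not $O(j^4/n^2)$; this is rescued by writing the binomial exactly as $\frac1{j!}\prod_{0\le i<j}\bigl((1-\alpha)+\frac{1-i}{n}\bigr)$, a polynomial in $1/n$ whose $n^{-r}$ coefficient carries the compensating factor $(1-\alpha)^{j-r}$ — you flag the issue but do not resolve it. Neither gap is fatal, but both must be repaired before the lemma, and in particular the exact formula \eqref{A1}, can be considered proved by your method.
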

\pf The first term on the right-hand side of \eqref{b-ell} can be
readily obtained as follows. If $1\le m\le n$, then
\begin{align*}
    A_{n,m}^*
    &= \sum_{j\ge0}\binom{n+1-m}{j} n^{-j}
    \sum_{j<\ell\le m}a_{\ell-j} \binom{m}{\ell} n^{-\ell}\\
    &\sim \sum_{j\ge0}\frac{(1-\alpha)^j}{j!}
    \sum_{\ell>j}a_{\ell-j}\frac{\alpha^\ell}{\ell!} \\
    &= \tilde{A}_0(\alpha).
\end{align*}
The more precise approximation in \eqref{b-ell} can be obtained by
refining all estimates, but the details are rather messy, notably
the error analysis. We resort instead to an analytic approach.
Observe that the sum on the left-hand side of \eqref{b-ell} is
itself a convolution. Our analytic proof then starts from the
relation
\begin{align}
    \lambda_{n,m,\ell}^*
    &= [z^{m-\ell}]\left(z+\frac1n\right)^m
    \left(1+\frac zn\right)^{n+1-m}\nonumber \\
    &= \frac1{2\pi i}\oint_{|z|=c} z^{\ell-1}
    \left(1+\frac1{nz}\right)^m
    \left(1+\frac zn\right)^{n+1-m} \dd z, \label{lnml}
\end{align}
where $c>0$. The relation \eqref{lnml} holds \emph{a priori} for
$1\le \ell \le m$, but the right-hand side becomes zero for
$\ell>m$. It follows that
\[
    A_{n,m}^*
    = \frac1{2\pi i}\oint_{|z|=c} A(z)
    \left(1+\frac1{nz}\right)^m
    \left(1+\frac zn\right)^{n+1-m} \dd z,
\]
where $0<c<\varrho$, $\varrho$ being the radius of convergence of
$A$. By the expansion
\begin{align*} 
\begin{split}
    &\left(1+\frac1{nz}\right)^m
    \left(1+\frac zn\right)^{n+1-m}\\&\qquad =
    e^{\frac{\alpha}z+(1-\alpha)z}\left(1-\frac1{2n}
    \left((1-\alpha)z^2 -2z+\frac{\alpha}{z^2}\right)
    + O\left(\frac{(1-\alpha)^2|z|^4
    +\alpha^2 |z|^{-4}}{n^2}\right)\right),
\end{split}
\end{align*}
uniformly for $z$ on the integration path, and the integral
representations
\begin{align*}
    \tilde{A}_0(\alpha) &=\frac1{2\pi i}\oint_{|z|=c} A(z)
    e^{\frac{\alpha}z+(1-\alpha)z} \dd z \\
    \tilde{A}_1(\alpha) &=-\frac1{4\pi i}\oint_{|z|=c} A(z)
    \left((1-\alpha)z^2 -2z+\frac{\alpha}{z^2}\right)
    e^{\frac{\alpha}z+(1-\alpha)z} \dd z,
\end{align*}
we deduce \eqref{b-ell}. The expression \eqref{A0} is then obtained
by straightforward term-by-term integration. For \eqref{A1}, we
apply the relation
\[
    \frac{\text{d}}{\text{d}z}\,e^{\frac{\alpha}z+(1-\alpha)z}
    = \left(-\frac{\alpha}{z^2}+1-\alpha\right)
    e^{\frac{\alpha}z+(1-\alpha)z},
\]
and integration by parts, and then obtain
\begin{align}\label{A1a}
    \tilde{A}_1(\alpha) =-\frac1{4\pi i}\oint_{|z|=c}
    \left((1-z^2)A'(z)+(1-4z)A(z)\right)
    e^{\frac{\alpha}z+(1-\alpha)z} \dd z.
\end{align}
Substituting the series expansion $A(z) = \sum_{\ell\ge1}a_\ell
z^{\ell-1}$ and integrating term by term, we get \eqref{A1}. \qed

When $\alpha$ tends to the two boundaries $0$ and $1$, we have
\[
    A_{n,m}^*
    \sim \tilde{A}_0(\alpha)\sim
    \begin{cases}
        \dfrac{a_k}{k!}\,\alpha^k ,& \text{as }\alpha\to0^+,\\
        \quad\\
        \displaystyle \sum_{\ell\ge1} \frac{a_\ell}{\ell!},
        &\text{as }\alpha\to1^{-},
    \end{cases}
\]
where $k$ is the smallest integer such that $a_k\ne0$.

\paragraph{Asymptotics of $\sum_{1\le \ell \le m}  \ell^r
\lambda_{n,m,\ell}^*$} We now discuss special sums of the form
\[
    \Lambda_{n,m}^{(r)}
    := \sum_{1\le \ell \le m} \ell^r \lambda_{n,m,\ell},
\]
which will be repeatedly encountered below. Define
\[
    \thickbar{\Lambda}_{n,m}^{(r)} := \sum_{1 \le \ell \le m}
    \ell^r \lambda_{n,m,\ell}^*,
\]
so that $\Lambda_{n,m}^{(r)} = e_n
\thickbar{\Lambda}_{n-1,m}^{(r)}$. For convenience, we also write
\begin{equation}\label{eqn:thickbarLambda}
    \thickbar{\Lambda}_{n,m}
    := \thickbar{\Lambda}_{n,m}^{(0)}
    = \sum_{1 \le \ell \le m} \lambda_{n,m,\ell}^*.
\end{equation}
Let $I_k$ denote the modified Bessel functions
\[
    I_k(2z) := \sum_{j\ge0}\frac{z^{2j+k}}
    {j!(j+k)!}\qquad(k\in\mathbb{Z}).
\]

\begin{cor} Uniformly for $1\le m\le n$ \label{cor:Ur}
\begin{align}\label{Lmbd}
    \thickbar{\Lambda}_{n,m}^{(r)} = S_r(\alpha)
    + \frac{U_r(\alpha)}{n}
    + O\left(\alpha n^{-2}\right),
\end{align}
for $r=0,1,\dots$, where both $S_r$ and $U_r$ are entire functions
given by
\begin{align*}
    S_r(z) = \sum_{\ell \ge 1} \frac{z^{\ell}}{\ell!}
    \sum_{0\le j<\ell} (\ell-j)^r\frac{(1-z)^{j}}{j!},
\end{align*}
and
\begin{align}\label{Ur}
    U_r(\alpha)=\begin{cases} \displaystyle
        \frac{S_0(\alpha)}2-
        \frac32\sqrt{\frac{\alpha}{1-\alpha}}\,
        I_1\left(2\sqrt{\alpha(1-\alpha)}\right),
        & \text{if }r=0\\ \displaystyle
        -\frac12\left((2r-1)S_r(\alpha) +
        \sum_{0\le j<r}\binom{r}{j}
        \frac{j-(-1)^{r-j}(2r+2-3j)}{r+1-j}
        S_j(\alpha)\right), &\text{if }r\ge1.
    \end{cases}
\end{align}
\end{cor}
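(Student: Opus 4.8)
The plan is to obtain Corollary~\ref{cor:Ur} as the instance $a_\ell=\ell^r$ of the preceding lemma. First I would check its hypothesis: $A(z)=\sum_{\ell\ge1}\ell^r z^{\ell-1}$ has radius of convergence $1>0$, and comparing the definitions shows $\thickbar{\Lambda}_{n,m}^{(r)}=A_{n,m}^*$ for this sequence. Hence the estimate \eqref{b-ell}, which (as its proof via the contour integral shows) holds uniformly for $1\le m\le n$, applies verbatim: it supplies the error term $O(\alpha n^{-2})$ of \eqref{Lmbd}, and \eqref{A0} gives
\[
    \tilde{A}_0(\alpha)=\sum_{\ell\ge1}\frac{\alpha^\ell}{\ell!}
    \sum_{0\le j<\ell}(\ell-j)^r\,\frac{(1-\alpha)^j}{j!}
    =S_r(\alpha).
\]
So the only remaining point is to identify $\tilde{A}_1$ with $U_r$.

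For $r\ge1$ I would note that the bracketed expression in \eqref{A1} depends on $\ell$ and $j$ only through $k:=\ell-j$: with $a_\ell=\ell^r$ (so $a_0=0$, automatically) it equals
\[
    h_k:=k(k+1)^r-(k+2)(k-1)^r+k^r,
\]
so that $\tilde{A}_1(\alpha)=-\tfrac12\sum_{\ell\ge1}\frac{\alpha^\ell}{\ell!}\sum_{0\le j<\ell}h_{\ell-j}\,\frac{(1-\alpha)^j}{j!}$. Expanding $(k\pm1)^r$ by the binomial theorem writes $h_k=\sum_{0\le i\le r}c_i^{(r)}k^i$ as a polynomial of degree exactly $r$ (the $k^{r+1}$ contributions cancel), with $c_r^{(r)}=2r-1$; substituting this and applying, term by term, the identity $\sum_{\ell\ge1}\frac{\alpha^\ell}{\ell!}\sum_{0\le j<\ell}(\ell-j)^i\frac{(1-\alpha)^j}{j!}=S_i(\alpha)$ yields $\tilde{A}_1=-\tfrac12\sum_{0\le i\le r}c_i^{(r)}S_i(\alpha)$. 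Finally, using $\binom{r}{i-1}=\tfrac{i}{r+1-i}\binom ri$ to collect the binomial coefficients gives $c_i^{(r)}=\binom ri\frac{i-(-1)^{r-i}(2r+2-3i)}{r+1-i}$ for $i<r$, which is exactly the coefficient displayed in \eqref{Ur}.

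For $r=0$ the lemma's convention $a_0=0$ clashes with $0^0=1$, and this is precisely where the Bessel term appears. Here $a_\ell=1$ for $\ell\ge1$, so $h_k=-1$ for all $k\ge2$ but $h_1=1\cdot a_2-3a_0+a_1=2$; thus $h_k=-1+3\,\dbbracket{k=1}$. The constant part gives $-\tfrac12(-1)S_0(\alpha)=\tfrac12S_0(\alpha)$, while the $\dbbracket{k=1}$ part retains only the $j=\ell-1$ terms and contributes
\[
    -\frac32\sum_{\ell\ge1}\frac{\alpha^\ell(1-\alpha)^{\ell-1}}
    {\ell!\,(\ell-1)!}
    =-\frac32\sqrt{\frac{\alpha}{1-\alpha}}\,
    I_1\!\bigl(2\sqrt{\alpha(1-\alpha)}\bigr),
\]
so $\tilde{A}_1(\alpha)=U_0(\alpha)$. (Equivalently one recovers the same value from the contour representation \eqref{A1a} with $A(z)=\tfrac1{1-z}$, since $(1-z^2)A'(z)+(1-4z)A(z)=3-\tfrac1{1-z}$, the $3$ producing $\tfrac1{2\pi i}\oint_{|z|=c}e^{\alpha/z+(1-\alpha)z}\dd z=\sqrt{\alpha/(1-\alpha)}\,I_1(2\sqrt{\alpha(1-\alpha)})$ and the $\tfrac1{1-z}$ producing $S_0(\alpha)$.) I expect no conceptual obstacle; the only genuinely fiddly step is the algebraic reduction of $c_i^{(r)}$ to the stated closed form, together with correctly bookkeeping the single $k=1$ boundary term that distinguishes the $r=0$ case from $r\ge1$.
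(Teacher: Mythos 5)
Your proposal is correct and follows essentially the same route as the paper: both deduce the corollary from the preceding lemma with $a_\ell=\ell^r$, and the identification of $\tilde A_1$ with $U_r$ rests on the same binomial expansion of $k(k+1)^r-(k+2)(k-1)^r+k^r$ (with the $k^{r+1}$ terms cancelling and $c_r^{(r)}=2r-1$), which the paper carries out at the generating-function level inside the contour integral \eqref{A1a} while you do it at the coefficient level in \eqref{A1}. Your handling of $r=0$ via the boundary term $h_k=-1+3\dbbracket{k=1}$ is exactly the coefficient-wise counterpart of the paper's identity $(1-z^2)E_0'(z)+(1-4z)E_0(z)=3-\tfrac1{1-z}$, so the two proofs coincide in substance.
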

In particular,
\begin{align}\label{Ura}
\begin{split}
    U_1(\alpha) &= -S_0(\alpha) -\tfrac12S_1(\alpha)\\
    U_2(\alpha) &= \phantom{-}S_0(\alpha)
    -2S_1(\alpha)-\tfrac32S_2(\alpha)\\
    U_3(\alpha) &= -S_0(\alpha)+2S_1(\alpha)-3S_2(\alpha)
    -\tfrac52S_3(\alpha).
\end{split}
\end{align}
These are sufficient for our uses.
\begin{proof}
We start with the integral representation (see \eqref{A1a})
\[
    U_r(\alpha)=-\frac1{4\pi i}\oint_{|z|=c}
    \left((1-z^2)E_r'(z)+(1-4z)E_r(z)\right)
    e^{\frac{\alpha}z+(1-\alpha)z} \dd z,
\]
where $E_r(z) :=\sum_{\ell\ge1}\ell^r z^{\ell-1}$. When $r=0$, we
have $E_0(z) = (1-z)^{-1}$. Thus
\[
    U_0(\alpha) =-\frac1{4\pi i}\oint_{|z|=c}
    \left(-\frac1{1-z}+3\right)
    e^{\frac{\alpha}z+(1-\alpha)z} \dd z .
\]
Note that
\begin{align}\label{Sr-int-rep}
    S_r(\alpha) = \frac1{2\pi i}
    \oint_{|z|=c} E_r(z) e^{\frac{\alpha}z+(1-\alpha)z} \dd z
    \qquad(r=0,1,\dots).
\end{align}
Thus
\begin{align*}
    U_0(\alpha) = \frac{S_0(\alpha)}2
    - \frac32\sum_{\ell\ge1}\frac{\alpha^\ell(1-\alpha)^{\ell-1}}
    {\ell!(\ell-1)!} ,
\end{align*}
which proves \eqref{Ur} for $r=0$. For $r\ge1$, we have
\begin{align*}
    &(1-z^2)E_r'(z) + (1-4z)E_r(z) \\
    &\qquad= (1-z^2)\sum_{\ell\ge2} \ell^r(\ell-1)z^{\ell-2}
    +(1-4z)\sum_{\ell\ge1}\ell^r z^{\ell-1}\\
    &\qquad= \sum_{\ell\ge1} \ell(\ell+1)^r z^{\ell-1}
    -\sum_{\ell\ge2}(\ell+2)(\ell-1)^r z^{\ell-1} + E_r(z)\\
    &\qquad= \sum_{0\le j\le r} \binom{r}j\,E_{j+1}(z)
    -\sum_{0\le j\le r}\binom{r}j (-1)^{r-j}
    \left(E_{j+1}(z) +2E_j(z)\right) + E_r(z).
\end{align*}
From this and the relation \eqref{Sr-int-rep}, we obtain \eqref{Ur}.
Note that the coefficient of $E_{r+1}$ is zero.
\end{proof}

The Corollary implies specially that
\begin{equation} \label{LS}
    \Lambda_{n,m}^{(r)} = e^{-1} S_r(\alpha)
    \left(1+O\left(n^{-1}\right)\right),
\end{equation}
uniformly for $1\le m\le n$ and $r\ge0$. Since $S_r(z)=z+O(|z|^2)$
as $|z|\to0$, we have the uniform bound
\begin{align}\label{L-ub}
    \thickbar{\Lambda}_{n,m}^{(r)} \asymp S_r(\alpha)\asymp
    \alpha \qquad(1\le m\le n),
\end{align}
meaning that the ratio of $\thickbar{\Lambda}_{n,m}^{(r)}/\alpha$
remains bounded away from zero and infinity for all $m$ in the
specified range.

We also have the limiting behaviors
\[
    \lim_{\alpha\to0}\frac{S_r(\alpha)}{\alpha}=1,
\]
and
\[
    \lim_{\alpha\to1}S_r(\alpha)
    = \sum_{\ell\ge1}\frac{\ell^r}{\ell!}
    = \{e-1,e,2e,5e,15e,\cdots\}.
\]
Without the first term, the right-hand side is, up to $e$, the Bell
numbers (all partitions of a set; Sequence A000110 in Sloane's
Encyclopedia of Integer Sequences).

The following expansions for $S_r(z)$ and $U_r(z)$ as $z \to 0$ will
be used later
\begin{equation}\label{eqn:SrTr_expansion}
\begin{split}
    S_r(z) & = z + \frac{2^{r}+1}{2} z^{2} + O(z^{3}),\\
    U_r(z) & = - \frac{2^{r}+1}{2} z + O(z^{2}),
\end{split}
\end{equation}
for $r=0,1,\dots$.

See also Appendix A for other properties of $S_r(\alpha)$.

\section{The expected values and their asymptotics\label{sec:Exp}}

Consider the mean $\mu_{n,m}:=\mathbb{E}(X_{n,m})=P_{n,m}'(1)$. It
satisfies the recurrence
\[
    \mu_{n,m} = \frac{1}{\Lambda_{n,m}}
    \left(1+ \sum_{1\le \ell \le m}
    \lambda_{n,m,\ell}\, \mu_{n,m-\ell}\right),
\]
for $1\le m\le n$ with $\mu_{n,0}=0$.

From Theorem~\ref{thm:YO1}, we already have $\mu_{n,m}\sim enH_m$
when $m=O(1)$, while for $m\to\infty$ and $m\le n$ we expect that
(recalling $\alpha=\frac{m}{n}$)
\[
    \mu_{n,m} \sim en(H_m+\phi_1(\alpha));
\]
see Section~\ref{sec:ae-small-m} for how such a form arises. We will
indeed derive in this section a more precise expansion. The uniform
appearance of the harmonic numbers $H_m$ may be traced to the
asymptotic estimate \eqref{mO1}; see also Lemma~\ref{lmm-at}.

\begin{thm} The expected value of $X_{n,m}$
satisfies the asymptotic approximation \label{thm:mu}
\begin{align}\label{mu-asymp0}
    \frac{\mathbb{E}(X_{n,m})}{en} = H_m + \phi_1(\alpha)
    + \frac{H_m-\phi_1(\alpha)+2\phi_2(\alpha)
    +2\alpha\phi_1'(\alpha)}{2n}
    +O\left(n^{-2}H_m\right),
\end{align}
uniformly for $1\le m\le n$, where $\phi_1$ is defined in
\eqref{phi1-z} and $\phi_2$ is an analytic function defined by
\begin{align}\label{phi-2}
\begin{split}
    \phi_2(\alpha) &= \frac12-
    \int_0^\alpha \left(\frac{S_2(x)S_1'(x)}{2S_1(x)^3}
    -\frac{S_0(x)}{S_1(x)^2}-\frac1{2S_1(x)}
    -\frac1{2x^2}+\frac1x\right)\dd x.
\end{split}
\end{align}
\end{thm}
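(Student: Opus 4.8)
The plan is to solve the recurrence for $\mu_{n,m}$ asymptotically by an Ansatz-and-bootstrap method, using the sum estimates from Section~\ref{sec:Asymptotic_sum} as the main engine. Writing $\mu_{n,m}=en\,g_{n,m}$, the recurrence $\Lambda_{n,m}\mu_{n,m}=1+\sum_{\ell}\lambda_{n,m,\ell}\mu_{n,m-\ell}$ becomes a relation of the form $g_{n,m}-g_{n,m-1}=(\text{something of order }1/m)+(\text{correction terms})$, so that $g_{n,m}$ should behave like $H_m$ plus a smooth function of $\alpha=m/n$. Concretely, I would substitute the candidate expansion $g_{n,m}=H_m+\phi_1(\alpha)+\frac1n\psi(\alpha,H_m)$ into the recurrence, expand each $\mu_{n,m-\ell}=e(n)\bigl(H_{m-\ell}+\phi_1(\tfrac{m-\ell}{n})+\cdots\bigr)$ in $\ell$ (legitimate because, by \eqref{LS} and the decay of $\lambda_{n,m,\ell}$ in $\ell$, only $\ell=o(m)$ contributes), and use Corollary~\ref{cor:Ur} to evaluate the resulting sums $\sum_\ell \ell^r\lambda_{n,m,\ell}$ in terms of $S_r(\alpha)$, $U_r(\alpha)$. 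Matching the leading order forces $\phi_1'(\alpha)=\frac1{S_1(\alpha)}-\frac1\alpha$ — which, since $\phi_1(0)=0$ (as $S_1(x)\sim x$), is exactly \eqref{phi1-z}; here the appearance of $H_m$ comes from the $\frac1\alpha$ piece matching the harmonic-number difference $H_m-H_{m-\ell}\approx \ell/m$, cf.\ the discussion after \eqref{mu-asymp0}. Matching the $1/n$ order produces a first-order linear ODE for $\psi$ whose inhomogeneous term involves $S_0,S_1,S_2,U_0,U_1$ and $\phi_1$; integrating it yields the stated combination $\tfrac12\bigl(H_m-\phi_1(\alpha)+2\phi_2(\alpha)+2\alpha\phi_1'(\alpha)\bigr)$ with $\phi_2$ defined by \eqref{phi-2}, the constant $\tfrac12$ in \eqref{phi-2} being fixed by the boundary behaviour as $\alpha\to0$ (equivalently, by consistency with the $m=O(1)$ expansion one gets from refining Theorem~\ref{thm:YO1}).

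Having guessed the shape, the real work is the rigorous error analysis, i.e.\ showing the remainder is $O(n^{-2}H_m)$ uniformly for $1\le m\le n$. I would set $R_{n,m}:=\mu_{n,m}-en\bigl(H_m+\phi_1(\alpha)+\frac1{2n}(H_m-\phi_1(\alpha)+2\phi_2(\alpha)+2\alpha\phi_1'(\alpha))\bigr)$, plug into the exact recurrence, and derive from the smoothness of $\phi_1,\phi_2$ (so that Taylor expansion in $\ell/n$ of the plugged-in main term is controlled, with the $\ell$-sums bounded using that $\sum_\ell \ell^r\lambda_{n,m,\ell}\asymp e^{-1}S_r(\alpha)\asymp\alpha$ from \eqref{L-ub}) an inequality of the roughly contractive type
\begin{align*}
    |R_{n,m}| \le \frac{1}{\Lambda_{n,m}}\left(\sum_{1\le\ell\le m}\lambda_{n,m,\ell}|R_{n,m-\ell}| + C\,\frac{H_m}{n}\right),
\end{align*}
valid for $m$ past some fixed threshold $m_0$, with the base cases $m\le m_0$ handled directly by the $m=O(1)$ analysis. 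Because $\Lambda_{n,m}=\sum_\ell\lambda_{n,m,\ell}$ is exactly the total weight on the right, the coefficient of the $R$-sum is $1$, so one does not get contraction for free; instead I would track the dependence on $m$ by the Ansatz $|R_{n,m}|\le K (H_m)/n^{\,?}$ and verify the induction step closes with the claimed exponent, the key cancellation being that the discrepancy generated at stage $m$ is $O(H_m/n\cdot \frac1m)=O(1/(mn)\cdot\text{stuff})$ after dividing by $\Lambda_{n,m}\asymp\alpha/(en)$, which telescopes to $O(n^{-2}H_m)$. This telescoping/summation-of-errors step is where I expect the main obstacle: one must show the per-stage errors, after the $\Lambda_{n,m}^{-1}$ amplification (which is of size $\asymp n/m$, large when $m$ is small!), still sum to the target order uniformly, and in particular control the transition region $m\asymp\sqrt n$ and the boundary $m\to n$ simultaneously. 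The decay estimate \eqref{mO1} / the uniform two-term expansion \eqref{b-ell} with its $O(\alpha n^{-2})$ error is exactly what makes the $\ell$-sums tame enough for this to go through.

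A cleaner alternative I would keep in reserve is to iterate the recurrence into an explicit (but complicated) finite sum over descent paths $m=m_0>m_1>\cdots$, interpreting it via the geometric structure visible in \eqref{Qnmt}, and then estimate that sum by Euler–Maclaurin/Laplace-type arguments, comparing $\sum_\ell \lambda_{n,m,\ell}(\cdots)$ with the integral $\int_0^\alpha(\cdots)\,dx$ that defines $\phi_1,\phi_2$; this makes the identification of $\phi_1$ as an integral in \eqref{phi1-z} and of $\phi_2$ in \eqref{phi-2} transparent, at the cost of messier bookkeeping. Either way, the two-term sum asymptotics of Section~\ref{sec:Asymptotic_sum} (Lemma on $A_{n,m}^*$ and Corollary~\ref{cor:Ur}) together with the uniform lower bound \eqref{L-ub} on $\Lambda_{n,m}$ are the load-bearing inputs, and the final step is simply to read off the constants: the numerical value of $c_1$ in \eqref{c1c2} then follows by specializing \eqref{mu-asymp0} at $m=n$ (so $\alpha=1$), using $H_n=\log n+\gamma+O(1/n)$ and $\phi_1(1)=\log2-\gamma+\phi_1(\tfrac12)-\cdots$ as recorded in the introduction, though that evaluation is carried out in the corollaries rather than in this theorem's proof.
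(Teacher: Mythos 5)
Your proposal follows essentially the same route as the paper: postulate the Ansatz $H_m+\phi_1(\alpha)+\frac1n(\cdot)$, match orders using the $S_r$/$U_r$ expansions of $\sum_\ell \ell^r\lambda_{n,m,\ell}$ to obtain the ODEs $\phi_1'=\frac1{S_1}-\frac1z$ and \eqref{phi-2-diff}, fix $\phi_2(0)=\frac12$ from the small-$m$ boundary case, and then run an inductive error analysis on the remainder using the lower bound $\thickbar{\Lambda}_{n,m}\ge \frac mn$ (the paper's Lemma~\ref{lmm-at} and its variation Lemma~\ref{lmm:rec_variation}), including the split of the harmonic-difference sum at $\lceil\sqrt m\rceil$. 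The argument is correct and matches the paper's proof in all essential respects.
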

For simplicity, we consider
\[
    \mu_{n,m}^* := \frac{e_n}{n}\,\mu_{n+1,m},
\]
where $e_n := \left(1-\frac1{n+1}\right)^{n+1}$, and we will prove
that
\begin{align}\label{mu-asymp}
    \mu_{n,m}^* = H_m + \phi_1(\alpha) +
    \frac{H_m + \phi_2(\alpha)}{n}
    +O\left(n^{-2}H_m\right),
\end{align}
for $1\le m\le n$, which is identical to \eqref{mu-asymp0}; see
Figure~\ref{fig:mu-diff} for a graphical rendering. More figures
are collected in Appendix B. 
\begin{figure}[!ht]
\begin{center}
\includegraphics[width=6cm]{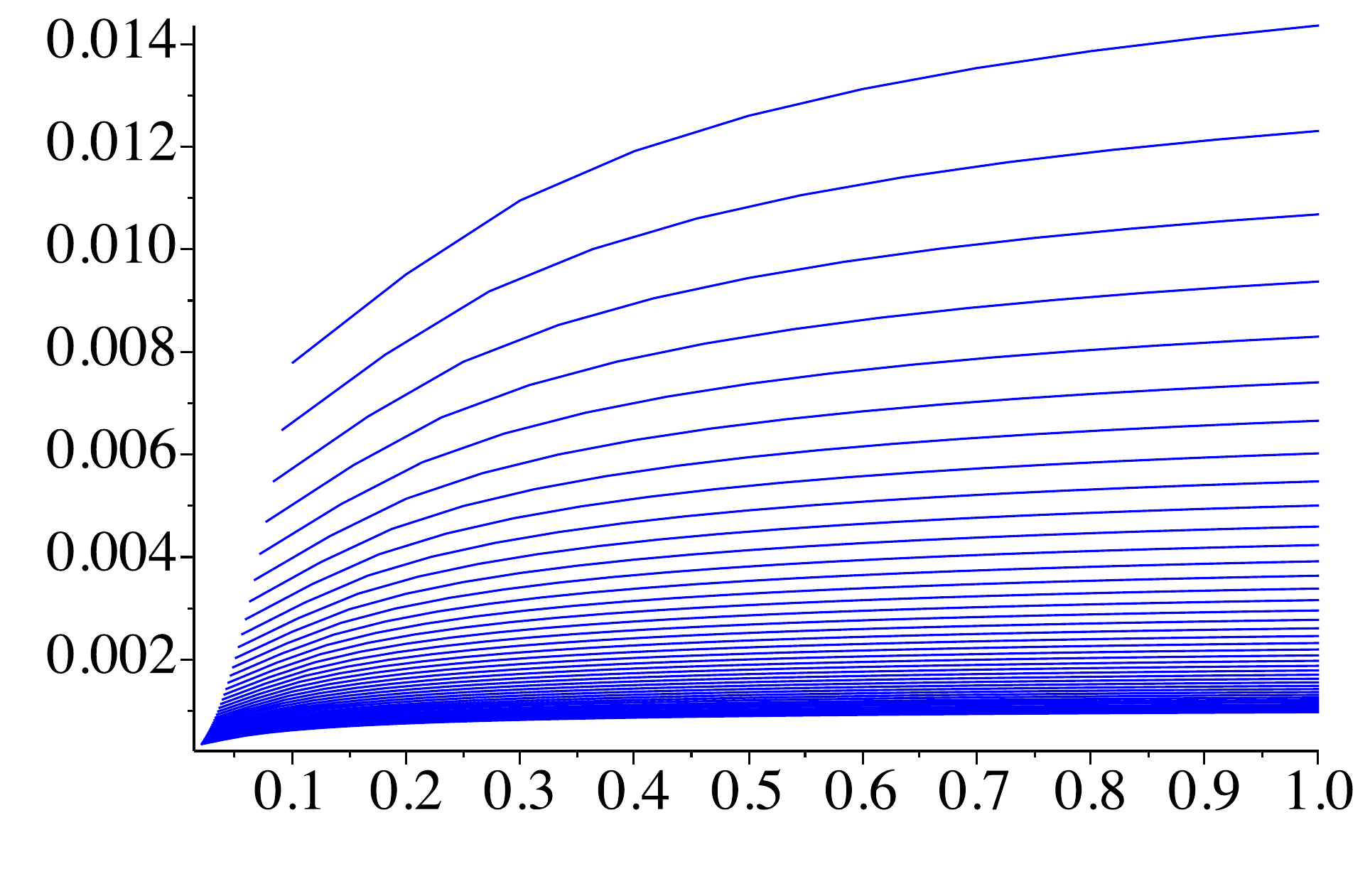}\;\;
\includegraphics[width=6cm]{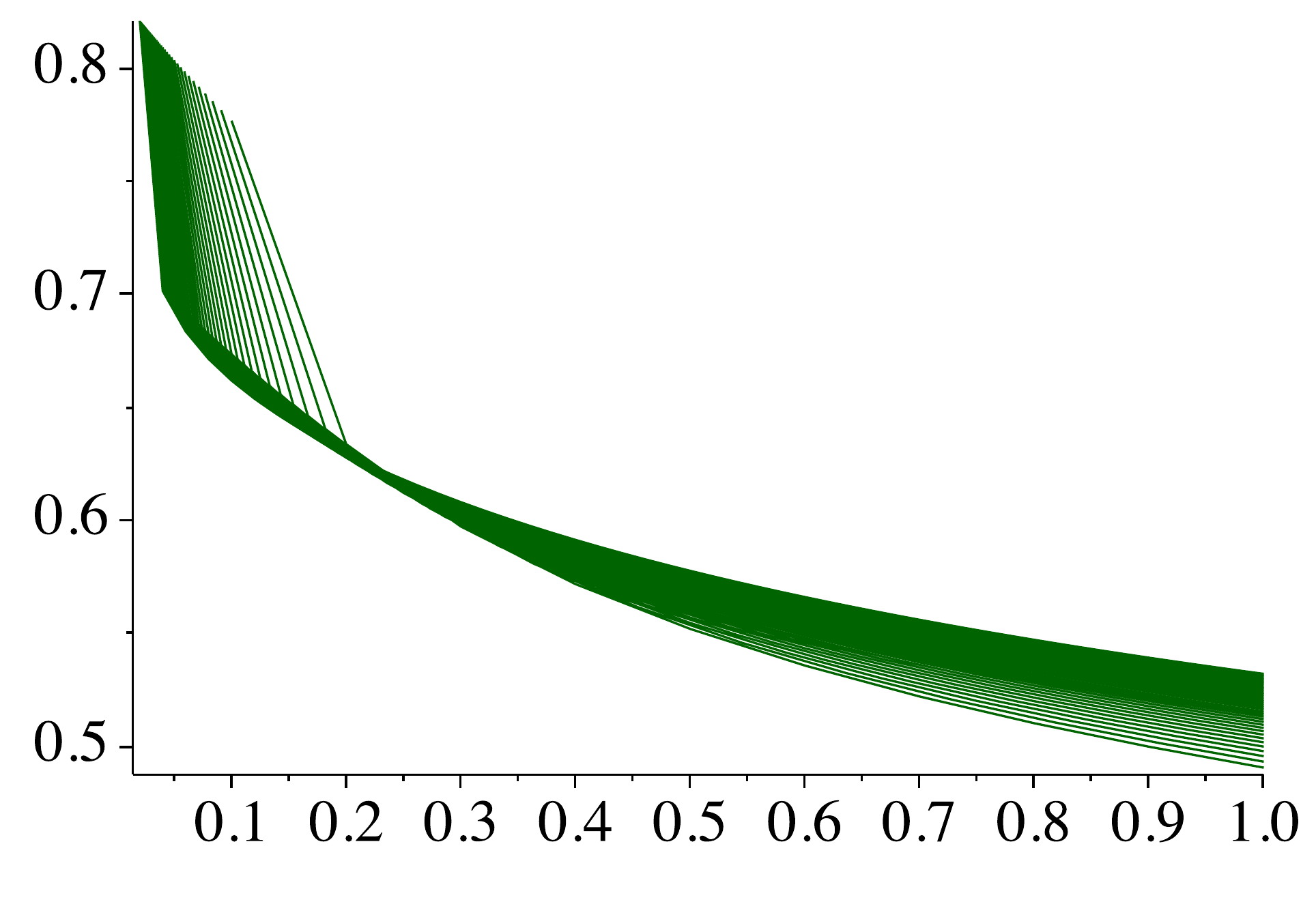}
\end{center}
\vspace*{-.5cm} \caption{\emph{The differences $\mu_{n,m}^*-(H_m +
\phi_1(\alpha) + \frac{H_m + \phi_2(\alpha)}{n})$ for $1\le m\le n$
(normalized to the unit interval) and $n=10,\dots,50$ (left in
top-down order), and the normalized differences $(\mu_{n,m}^*-(H_m +
\phi_1(\alpha) + \frac{H_m + \phi_2(\alpha)}{n}))n^2/H_m$ for
$n=10,\dots,50$ (right).}} \label{fig:mu-diff}
\end{figure}

Our analysis will be based on the recurrence
\begin{align}\label{mu-rr}
    \sum_{1\le \ell\le m}\lambda_{n,m,\ell}^*
    \left(\mu_{n,m}^*  - \mu_{n,m-\ell}^*\right)
    = \frac1n,
\end{align}
or alternatively
\[
    \mu_{n,m}^* = \frac{1}{\thickbar{\Lambda}_{n,m}}
    \left(\frac{1}{n} + \sum_{1 \le \ell \le m}
    \lambda_{n,m,\ell}^* \, \mu_{n,m-\ell}^*\right),
\]
with $\mu_{n,0}^*=0$, where $\lambda_{n,m,\ell}^*$ and
$\thickbar{\Lambda}_{n,m}$ are defined in \eqref{lmbd-star} and
\eqref{eqn:thickbarLambda}, respectively. In particular, this gives
$\mu_{n,1}^*=1$,
\begin{align}\label{mu-star-small}
\begin{split}
    \mu_{n,2}^* &= \frac{3n^2+n-1}{2n^2+2n-1},\\
    \mu_{n,3}^* &= \frac{22n^6+40n^5-19n^4
    -42n^3+14n^2+15n-6)}{(2n^2+2n-1)(6n^4+12n^3-7n^2-9n+6)}.
\end{split}
\end{align}
In general, the $\mu_{n,m}^*$ are all rational functions of $n$
but their expressions become long as $m$ increases. In
Section~\ref{sec:ae-small-m}, we give asymptotic expansions for
$\mu_{n,m}^*$ for small values of $m$ as $n \to \infty$, which are
also required as initial values for obtaining more refined
asymptotic expansions for other ranges of $m$ in
Section~\ref{sec:mu-ae}.

Starting from the asymptotic estimate $\mu_{n,m}^* \sim H_m$ when
$m=O(1)$, we first postulate an Ansatz approximation of the form
\begin{align}\label{mu-phi-1}
    \mu_{n,m}^* \sim H_m + \phi(\alpha) \qquad(1\le m\le n),
\end{align}
for some smooth function $\phi$. Then we will justify such an
expansion by an error analysis relying on Lemma~\ref{lmm-at} after a
proper choice of $\phi$. This same procedure can then be extended
and yields a more precise expansion; see Section~\ref{sec:mu-ae}.

Instead of starting from a state with a fixed number of ones, the
first step of the Algorithm ($1+1$)-EA described in Introduction
corresponds to the situation when the initial state $f(\mathbf{x})$
(the number of $1$s) is not fixed but random. Assume that this
input follows a binomial distribution of parameter $1-\rho\in(0,1)$
(each bit being $1$ with probability $1-\rho$ and $0$ with
probability $\rho$). Denote by $X_n$ the number of steps used by
($1+1$)-EA to reach the optimum state. Such a situation can also be
dealt with by applying Theorem~\ref{thm:Y-all} and we obtain the
same limit law. The following result describes precisely the
asymptotic behavior of the expected optimization time.

\begin{thm} The expected value of $X_n$ satisfies \label{thm:EYn}
\begin{align*}
\begin{split}
    \frac{\mathbb{E}(X_n)}{en} &= \log \rho n + \gamma
    + \phi_1(\rho) \\
    &\quad +\frac{\log \rho n +\gamma+1-\phi_1(\rho)
    +2\rho\phi_1'(\rho)+\rho(1-\rho)\phi_1''(\rho)
    +2\phi_2(\rho)}{2n} +O\left(\frac{\log n}{n^2}\right).
\end{split}
\end{align*}
\end{thm}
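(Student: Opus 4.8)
The statement to prove is Theorem~\ref{thm:EYn}, an asymptotic expansion for $\mathbb{E}(X_n)$ when the initial state $f(\mathbf{x})$ is random, binomially distributed with parameter $1-\rho$ (so the number of \emph{missing} ones $M$ is $\mathrm{Bin}(n,\rho)$). The plan is to condition on the initial deficiency: $\mathbb{E}(X_n) = \sum_{0\le m\le n}\binom{n}{m}\rho^m(1-\rho)^{n-m}\,\mu_{n,m}$, and then substitute the uniform expansion for $\mu_{n,m}$ from Theorem~\ref{thm:mu}. Since that expansion is valid uniformly for $1\le m\le n$ and the binomial weight concentrates around $m\approx\rho n$ (i.e.\ $\alpha\approx\rho$), the whole computation reduces to evaluating the binomial average of each term $H_m$, $\phi_1(\alpha)$, $\phi_2(\alpha)$, $\alpha\phi_1'(\alpha)$, and of the error $n^{-2}H_m$.

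\medskip

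\textbf{Key steps.} First I would handle the harmonic-number term. Using $H_m = \log m + \gamma + \frac1{2m} + O(m^{-2})$ for the bulk of the distribution and treating the negligible left tail ($m=O(\sqrt{n\log n})$-deviations, and the $m=0$ atom of mass $(1-\rho)^n$, both exponentially small) separately, one gets
\[
    \sum_{m}\binom{n}{m}\rho^m(1-\rho)^{n-m}H_m
    = \log\rho n + \gamma + \frac{1-\rho}{2\rho n} + O\!\left(\frac{\log n}{n^2}\right),
\]
by $\mathbb{E}(\log M) = \log\mathbb{E}(M) - \frac{\mathbb{V}(M)}{2\mathbb{E}(M)^2} + \cdots = \log\rho n - \frac{1-\rho}{2\rho n}+\cdots$ and $\mathbb{E}(1/M) = \frac1{\rho n} + O(n^{-2})$; the two $O(n^{-2})$ corrections combine with the $\frac1{2m}$-term. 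Second, for any fixed analytic $g$ (here $g=\phi_1$, $\phi_2$, or $\alpha\mapsto\alpha\phi_1'(\alpha)$), a second-order Taylor expansion of $g(M/n)$ around $\rho$ together with $\mathbb{E}(M/n)=\rho$ and $\mathbb{V}(M/n)=\rho(1-\rho)/n$ gives
\[
    \mathbb{E}\,g(M/n) = g(\rho) + \frac{\rho(1-\rho)}{2n}\,g''(\rho) + O(n^{-2}).
\]
Applied to $\phi_1$ this produces the $\phi_1(\rho)$ main term plus $\frac{\rho(1-\rho)\phi_1''(\rho)}{2n}$; applied to $\phi_2$ and to $\alpha\phi_1'(\alpha)$ it produces $\phi_2(\rho)/n$ and $\rho\phi_1'(\rho)/n$ (their $g''$ contributions are absorbed into the $O(\log n/n^2)$ since they carry an extra $1/n$). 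Collecting the constant terms yields $\log\rho n+\gamma+\phi_1(\rho)$; collecting the $1/n$ terms yields exactly $\frac{1}{2n}\bigl(\log\rho n+\gamma+1-\phi_1(\rho)+2\rho\phi_1'(\rho)+\rho(1-\rho)\phi_1''(\rho)+2\phi_2(\rho)\bigr)$ as claimed. Finally, $\mathbb{E}(n^{-2}H_M)=O(n^{-2}\log n)$, matching the stated error.

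\medskip

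\textbf{Main obstacle.} The delicate point is the error control, not the formal bookkeeping. Theorem~\ref{thm:mu} gives $\mu_{n,m}/(en) = H_m+\phi_1(\alpha)+O(n^{-1})$ with the correction itself $O(n^{-2}H_m)$, but I must make sure that averaging against the binomial does not blow up the error — in particular that the small-$m$ range, where $H_m$ is small but the relative error in the Ansatz is worst, and the large-deviation tails of $\mathrm{Bin}(n,\rho)$, contribute only $O(n^{-2}\log n)$. This requires a crude a~priori bound $\mu_{n,m}=O(n\log n)$ valid for \emph{all} $1\le m\le n$ (which follows from monotonicity in $m$ and the $m=n$ case), used on the exponentially small tail, plus Chernoff bounds for $\mathrm{Bin}(n,\rho)$ to show the deviation regions $|M-\rho n|>n^{2/3}$ are negligible. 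Once those two estimates are in place, the Taylor-expansion arguments above apply on the good event $|M-\rho n|\le n^{2/3}$, where all of $\phi_1,\phi_2,\phi_1',\phi_1''$ are bounded with bounded derivatives, and the claimed expansion follows. \qed
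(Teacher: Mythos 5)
Your proposal is correct and follows essentially the same route as the paper: condition on the binomial initial deficiency, insert the uniform expansion of Theorem~\ref{thm:mu}, and evaluate the binomial averages by normal approximation (the paper organizes this via Stirling's formula, a local Gaussian expansion of $\pi_{n,m}$ in the variable $x=(m-\rho n)/\sqrt{\rho\bar\rho n}$, and Euler--Maclaurin, whereas you use binomial moments and the delta method -- the same computation in different clothing), with Chernoff-type tail bounds disposing of the non-central range. One arithmetic slip: in your displayed formula for $\sum_m\pi_{n,m}H_m$ the $n^{-1}$-correction should be $-\tfrac{1-\rho}{2\rho n}+\tfrac{1}{2\rho n}=\tfrac{1}{2n}$, not $\tfrac{1-\rho}{2\rho n}$; this is what produces the "$+1$" inside the $\tfrac{1}{2n}(\cdots)$ bracket, and your final collected expression is consistent with the corrected value, so the conclusion stands.
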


Note that $e(\log \rho+\gamma+\phi_1(\rho))$ is an increasing
function of $\rho$, which is consistent with the intuition that it
takes less steps to reach the final state if we start with more
$1$s (small $\rho$ means $1-\rho$ closer to $1$, or $1$ occurring
with higher probability). Also
\[
    1+2\rho\phi_1'(\rho)+\rho(1-\rho)\phi_1''(\rho)
    = -2+\frac1\rho +\frac{2\rho}{S_1(\rho)}
    -\rho(1-\rho)\frac{S_1'(\rho)}{S_1(\rho)^2}.
\]
The constant $c_2$ in \eqref{c1c2} can now be computed and has the
value
\begin{align}\label{c2}
    c_2 = \frac{e}2\left(-\log 2 + \gamma -\phi_1(\tfrac12)+
    2\phi_2(\tfrac12) +\frac{1}{S_1(\tfrac12)}
    -\frac{S_1'(\tfrac12)}{4S_1(\tfrac12)^2}\right)
    \approx 0.59789875\dots.
\end{align}

\medskip
\noindent
\begin{minipage}{0.5\textwidth}
\quad\, Numerically, to compute the value of $\phi_1(\alpha)$ for
$\alpha\in(0,1]$, the most natural way consists in using the Taylor
expansion
\[
    \frac1{S_1(x)}-\frac1x = \sum_{j\ge0}\sigma_j x^j,
\]
and after a term-by-term integration
\[
    \phi_1(\alpha) =
    \sum_{j\ge0}\frac{\sigma_j}{j+1} \,\alpha^{j+1}.
\]
\end{minipage}
\begin{minipage}{0.45\textwidth}
\bigskip
\begin{center}
\includegraphics[width=4.5cm]{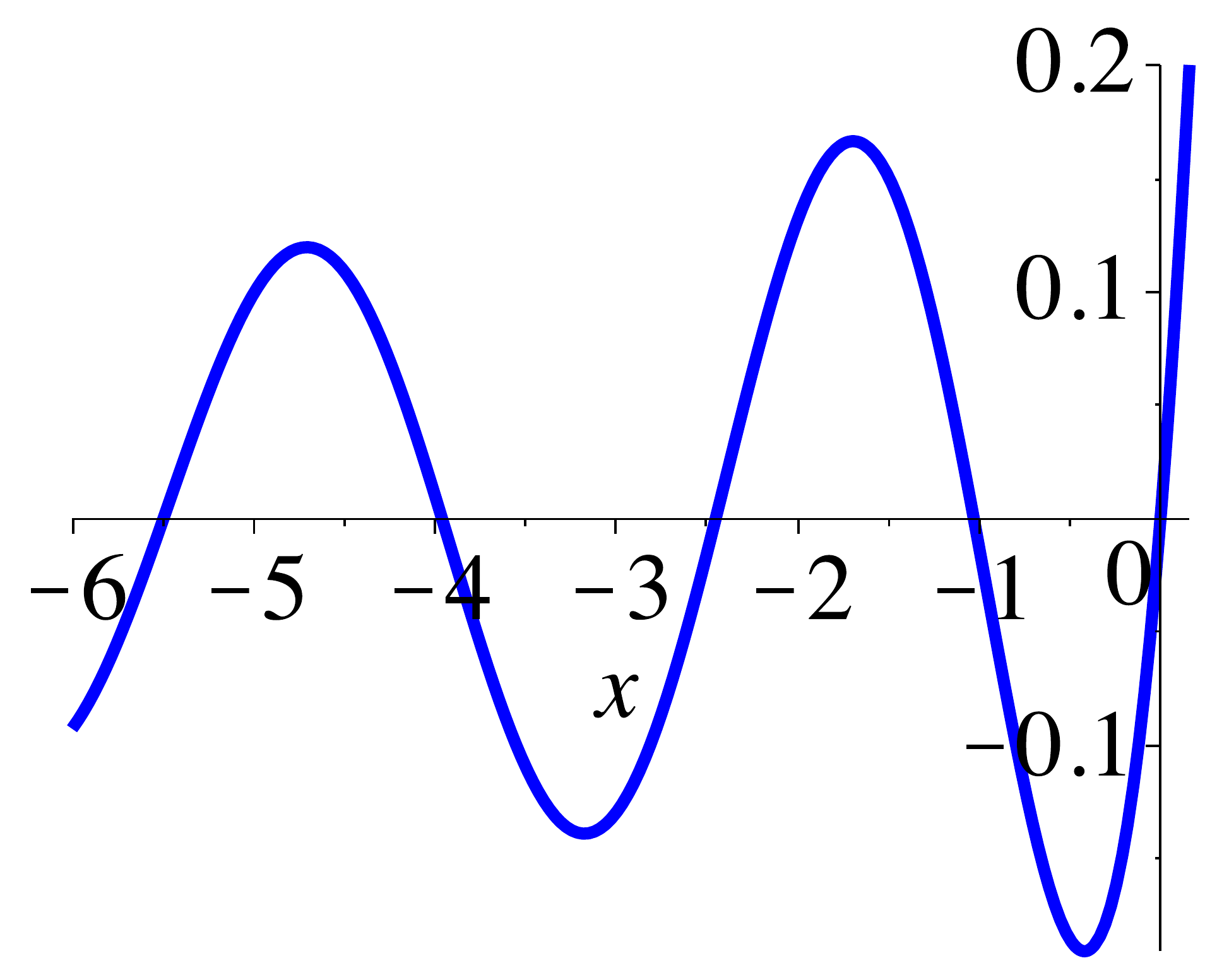}
\end{center}
\vspace*{-.5cm} \centerline{$S_1(x)$ \emph{has an infinity number of
zeros on $\mathbb{R}^-$.}}
\end{minipage}

\medskip

While $S_1(x)$ is an entire functions with rapidly decreasing
coefficients, such an expansion converges slowly when $\alpha\sim1$,
the main reason being that the smallest $|x|>0$ for which $S_1(x)=0$
occurs when $x\approx -1.0288$, implying that the radius of
convergence of this series is slightly larger than unity. Note that
$S_1(0)=0$ but the simple pole is removed by subtracting $\frac1x$.
A better idea is then expanding $\frac1{S_1(x)}-\frac1x$ at $x=1$
and integrating term-by-term
\[
    \phi_1(\alpha) =
    \sum_{j\ge0}\frac{\sigma_j'}{j+1}
    \left(1-(1-\alpha)^{j+1}\right)
    \quad \text{where}\quad
    \frac1{S_1(1-x)}-\frac1{1-x}
    = \sum_{j\ge0}\sigma_j' x^j.
\]
This expansion is numerically more efficient and stable because of
better convergence for $\alpha\in[0,1]$. The same technique also
applies to the calculation of $\phi_2$ and other functions in this
paper.

\subsection{Asymptotic expansions for small $m$}
\label{sec:ae-small-m} Our asymptotic approximation \eqref{mu-asymp}
to $\mu_{n,m}^*$ was largely motivated by intensive symbolic
computations for small $m$. We briefly summarize them here, which
will also be crucial in specifying the initial conditions for the
differential equations satisfied by functions ($\phi_1, \phi_2,
\dots$) involved in the full asymptotic expansion of $\mu_{n,m}^*$;
see \eqref{mu-n-star-ae}.

Starting from the closed-form expressions \eqref{mu-star-small}, we
readily obtain $\mu_{n,0}^* = 0$, $\mu_{n,1}^*= 1$, and
\begin{align*}
    \mu_{n,2}^* & = \tfrac{3}{2} - n^{-1}
    + \tfrac{5}{4}\,n^{-2} - \tfrac{7}{4}\,n^{-3}
    + \tfrac{19}{8}\,n^{-4} - \tfrac{13}{4}\, n^{-5}
    + O(n^{-6}),\\
    \mu_{n,3}^* & = \tfrac{11}{6} - \tfrac{13}{6}\, n^{-1}
    + \tfrac{155}{36} \, n^{-2} - \tfrac{323}{36} \, n^{-3}
    + \tfrac{4007}{216} \, n^{-4} - \tfrac{2783}{72} \, n^{-5}
    + O(n^{-6}).
\end{align*}
Similarly, we have
\begin{align*}
    \mu_{n,4}^* & = \tfrac{25}{12} - \tfrac{41}{12}\,n^{-1}
    + \tfrac{329}{36}\, n^{-2} - \tfrac{917}{36}\, n^{-3}
    + \tfrac{61841}{864}\, n^{-4} - \tfrac{19501}{96}\, n^{-5}
    + O(n^{-6}),\\
    \mu_{n,5}^* & = \tfrac{137}{60} - \tfrac{283}{60}\,n^{-1}
    + \tfrac{2839}{180}\,n^{-2} - \tfrac{19859}{360}\,n^{-3}
    + \tfrac{848761}{4320}\,n^{-4} - \tfrac{5107063}{7200}\,
    n^{-5} + O(n^{-6}).
\end{align*}
From these expansions, we first observe that the leading sequence is
exactly $H_m$ ($H_0:=0$)
\[
    \{H_m\}_{m\ge0} =\left\{0,1, \tfrac32,
    \tfrac{11}6, \tfrac{25}{12}, \tfrac{137}{60},
    \tfrac{49}{20}, \cdots\right\}.
\]
These also suggest the following Ansatz
\[
    \mu_{n,m}^* \approx \sum_{k\ge0}
    \frac{d_k(m)}{n^k},
\]
for some functions $d_k(m)$ of $m$. Using this form and the above
expansions to match the undetermined coefficients of the polynomials
(in $m$), we obtain successively
\begin{align*}
    d_{0}(m) & = H_{m} \quad (m \ge 0),\\
    d_{1}(m) & = H_{m} +\tfrac{1}{2}-\tfrac{3}{2}\,m
    \quad (m \ge 1),\\
    d_{2}(m) & = \tfrac{2}{3}\,H_{m} +\tfrac{1}{12}
    -\tfrac{7}{4}\,m+{\tfrac {11}{12}}\,{m}^{2}
    \quad (m \ge 2),\\
    d_{3}(m) & = \tfrac{1}{2}\,H_{m}
    +{\tfrac {7}{24}}-{\tfrac {575}{432}}\,m
    +{\tfrac {23}{18}}\,{m}^{2}-{\tfrac {283}{432}}\,{m}^{3},
    \quad (m \ge 2),\\
    d_{4}(m) & = {\tfrac {5}{18}}\,H_{m} -{\tfrac {59}{720}}
    -{\tfrac {3439}{3456}}\,m+{\tfrac {15101}{11520}}\,{m}^{2}
    -{\tfrac {19951}{17280}}\,{m}^{3}
    +{\tfrac {5759}{11520}}\,{m}^{4},
    \quad (m \ge 4).
\end{align*}
So we observe the general pattern
\[
    \mu_{n,m}^* \approx \sum_{k\ge0}
    \frac{1}{n^k}\left(b_k H_m + \sum_{0\le j\le k}
    \varpi_{k,j}m^j\right),
\]
for some explicitly computable sequence $b_k$ and coefficients
$\varpi_{k,j}$. A crucial complication arises here: the general form
for each $d_k(m)$ holds only for $m\ge 2\tr{\frac k2}$, and
correction terms are needed for smaller $m$. For example,
\begin{align*}
    d_{1}(m) & = H_{m} +\tfrac{1}{2}-\tfrac{3}{2}\,m
    - \tfrac{1}{2}\, \dbbracket{m=0},
    \quad (m\ge0)\\
    d_{2}(m) & = \tfrac{2}{3}\,H_{m} +\tfrac{1}{12}
    -\tfrac{7}{4}\,m+{\tfrac {11}{12}}\,{m}^{2}
    -\tfrac{1}{12}\, \dbbracket{m=0}
    +\tfrac{1}{12}\, \dbbracket{m=1},
    \quad (m\ge0),
\end{align*}
where we use the Iverson bracket notation $\dbbracket{A} = 1$ if $A$
holds, and $0$, otherwise. It is such a complication that makes the
determination of smaller-order terms more involved.

All the expansions here hold only for small $m$. When $m$ grows, we
see that
\[
    n^{-k}\sum_{0\le j\le k} \varpi_{k,j} m^j
    = \varpi_{k,k} \alpha^k + \varpi_{k,k-1}
    \frac{\alpha^{k-1}}{n} + \text{smaller order terms},
\]
and it is exactly this form that motivated naturally our choice of
the Ansatz \eqref{mu-asymp}.

\subsection{More asymptotic tools}
We develop here some other asymptotic tools that will be used in
proving Theorem~\ref{thm:mu}.

The following lemma is very helpful in obtaining error estimates to
be addressed below. It also sheds new light on the occurrence of the
harmonic numbers $H_m$ in \eqref{mu-asymp}.
\begin{lmm} Consider the recurrence \label{lmm-at}
\[
    \sum_{1\le \ell \le m} \lambda_{n,m,\ell}^*
    (a_{n,m}-a_{n,m-\ell}) = b_{n,m}\qquad (m\ge1),
\]
where $b_{n,m}$ is defined for $1\le m\le n$ and $n\ge1$. Assume
that $|a_{n,0}| \le d$ for $n \ge 1$, where $d \ge 0$. If
$|b_{n,m}|\le \frac cn$ holds uniformly for $1\le m\le n$ and
$n\ge1$, where $c>0$, then
\[
    |a_{n,m}| \le cH_m+d \qquad(0\le m\le n).
\]
\end{lmm}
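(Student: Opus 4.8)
The natural approach is induction on $m$. The base case $m=0$ is given: $|a_{n,0}|\le d\le cH_0+d$ since $H_0=0$. For the inductive step, fix $m\ge1$ and suppose $|a_{n,m'}|\le cH_{m'}+d$ for all $0\le m'<m$. I would solve the recurrence for $a_{n,m}$ by isolating the $\ell$-independent diagonal term. Writing $\thickbar{\Lambda}_{n,m}=\sum_{1\le\ell\le m}\lambda_{n,m,\ell}^*$, the recurrence reads $\thickbar{\Lambda}_{n,m}\,a_{n,m}=b_{n,m}+\sum_{1\le\ell\le m}\lambda_{n,m,\ell}^*\,a_{n,m-\ell}$, so
\[
    a_{n,m} = \frac{1}{\thickbar{\Lambda}_{n,m}}
    \left(b_{n,m}+\sum_{1\le\ell\le m}
    \lambda_{n,m,\ell}^*\,a_{n,m-\ell}\right).
\]
Applying the triangle inequality, the hypothesis $|b_{n,m}|\le c/n$, and the inductive bound $|a_{n,m-\ell}|\le cH_{m-\ell}+d$, gives
\[
    |a_{n,m}| \le \frac{1}{\thickbar{\Lambda}_{n,m}}
    \left(\frac cn + \sum_{1\le\ell\le m}
    \lambda_{n,m,\ell}^*\bigl(cH_{m-\ell}+d\bigr)\right)
    = \frac{c}{n\thickbar{\Lambda}_{n,m}}
    + c\,\frac{\sum_{1\le\ell\le m}\lambda_{n,m,\ell}^*H_{m-\ell}}
    {\thickbar{\Lambda}_{n,m}} + d.
\]

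So it remains to show that the first two terms sum to at most $cH_m$, i.e.
\[
    \frac{1}{n\thickbar{\Lambda}_{n,m}}
    + \frac{\sum_{1\le\ell\le m}\lambda_{n,m,\ell}^*H_{m-\ell}}
    {\thickbar{\Lambda}_{n,m}}
    \le H_m,
\]
equivalently (clearing the positive denominator $\thickbar{\Lambda}_{n,m}$)
\[
    \frac1n + \sum_{1\le\ell\le m}\lambda_{n,m,\ell}^*H_{m-\ell}
    \le H_m\sum_{1\le\ell\le m}\lambda_{n,m,\ell}^*,
\]
i.e. $\frac1n\le\sum_{1\le\ell\le m}\lambda_{n,m,\ell}^*\,(H_m-H_{m-\ell})$. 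Now $H_m-H_{m-\ell}=\sum_{m-\ell<i\le m}\frac1i\ge\frac{\ell}{m}$ since each of the $\ell$ summands is at least $\frac1m$. Hence it suffices to prove $\sum_{1\le\ell\le m}\ell\,\lambda_{n,m,\ell}^*\ge\frac mn$, that is $\thickbar{\Lambda}_{n,m}^{(1)}\ge\alpha$ in the notation of \eqref{eqn:thickbarLambda} and Corollary~\ref{cor:Ur}.

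\textbf{The main obstacle} is therefore exactly this inequality $\thickbar{\Lambda}_{n,m}^{(1)}=\sum_{1\le\ell\le m}\ell\,\lambda_{n,m,\ell}^*\ge\frac mn$, which must hold for \emph{all} $1\le m\le n$, not just asymptotically; the estimate $\thickbar{\Lambda}_{n,m}^{(1)}=S_1(\alpha)+O(n^{-1})$ from \eqref{Lmbd} is not by itself enough. To prove it exactly I would go back to the probabilistic meaning: $\lambda_{n+1,m,\ell}^* = \lambda_{n+1,m,\ell}/e_n$ and $\lambda_{n+1,m,\ell}$ is the probability that one mutation step moves the state from $f=n+1-m$ up by exactly $\ell$. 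Thus $\sum_\ell \ell\,\lambda_{n+1,m,\ell}$ is $e_n$ times $\mathbb{E}(\text{net upward progress in one step})$, but more usefully one can compute directly from \eqref{lmbd-star}: the term $j=0$, $\ell=1$ alone contributes $\binom{n+1-m}{0}\binom{m}{1}n^{-1}=\frac mn=\alpha$, and every other term ($\ell\ge1$, $j\ge0$, not both minimal) contributes a nonnegative quantity times $\ell\ge1$. Hence $\thickbar{\Lambda}_{n,m}^{(1)}\ge\alpha$ termwise, with equality only in degenerate cases. This closes the induction and yields $|a_{n,m}|\le cH_m+d$ for all $0\le m\le n$. \qed
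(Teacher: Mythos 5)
Your proof is correct and takes essentially the same route as the paper's: induction on $m$, isolating $a_{n,m}$ from the recurrence, and using the termwise positivity of $\lambda_{n,m,\ell}^*$ (specifically the $j=0$, $\ell=1$ term contributing $\frac{m}{n}$) to control the resulting weighted average. The paper closes slightly more directly by bounding $H_{m-\ell}\le H_{m-1}$ and using only $\thickbar{\Lambda}_{n,m}\ge \frac{m}{n}$, so that $|a_{n,m}|\le \frac{c}{m}+cH_{m-1}+d=cH_m+d$, whereas you route through $\thickbar{\Lambda}_{n,m}^{(1)}\ge\frac{m}{n}$ together with $H_m-H_{m-\ell}\ge\frac{\ell}{m}$; both steps rest on the same elementary observation and are equally valid.
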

\pf The result is true for $m=0$. For $m \ge 1$, we start from the
simple inequality
\[
    \thickbar{\Lambda}_{n,m}
    = \sum_{1\le \ell \le m} \lambda_{n,m,\ell}^*
    \ge \frac mn \qquad(1\le m\le n),
\]
because all terms in the sum expression \eqref{lmbd-star} are
positive and taking only one term ($j=0$ and $\ell=1$) gives the
lower bound. Then, by the induction hypothesis,
\begin{align*}
    |a_{n,m}|
    &\le \frac{|b_{n,m}|}{\thickbar{\Lambda}_{n,m}}
    + |a_{n,m-1}|\\
    &\le \frac{c}{n}\cdot \frac{n}{m} + c H_{m-1} +d\\
    &= c H_m +d,
\end{align*}
proving the lemma. \qed

Applying this lemma to the recurrence \eqref{mu-rr}, we then get a
simple upper bound to $\mu_{n,m}^*$.
\begin{cor} For $0\le m\le n$, the inequality
\[
    \mu_{n,m}^* \le H_m
\]
holds.
\end{cor}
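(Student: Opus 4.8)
The plan is to derive the stated bound $\mu_{n,m}^* \le H_m$ as a direct consequence of Lemma~\ref{lmm-at} applied to the recurrence \eqref{mu-rr}. First I would observe that the quantities $a_{n,m} := \mu_{n,m}^*$ satisfy precisely the hypothesis of Lemma~\ref{lmm-at}: by \eqref{mu-rr} we have
\[
    \sum_{1\le \ell\le m}\lambda_{n,m,\ell}^*
    \left(\mu_{n,m}^* - \mu_{n,m-\ell}^*\right) = \frac1n
    \qquad(1\le m\le n),
\]
so the right-hand side $b_{n,m} = \frac1n$ satisfies $|b_{n,m}| \le \frac1n$ uniformly, i.e.\ one may take $c=1$. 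Moreover $\mu_{n,0}^* = 0$, so the initial-value bound $|a_{n,0}| \le d$ holds with $d=0$.

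Applying Lemma~\ref{lmm-at} with $c=1$ and $d=0$ then yields immediately
\[
    |\mu_{n,m}^*| \le 1\cdot H_m + 0 = H_m
    \qquad(0\le m\le n).
\]
Since the $\mu_{n,m}^*$ are nonnegative (being, up to the positive factor $\frac{e_n}{n}$, expectations of nonnegative random variables), this gives $\mu_{n,m}^* \le H_m$ as claimed.

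There is essentially no obstacle here: the corollary is a clean specialization of the lemma, and the only thing worth checking is that the recurrence \eqref{mu-rr} is genuinely in the form required by Lemma~\ref{lmm-at} with the correct identification of $b_{n,m}$, $c$, and $d$. The one point that merits a sentence of justification is the passage from the absolute-value bound $|\mu_{n,m}^*|\le H_m$ delivered by the lemma to the one-sided bound $\mu_{n,m}^*\le H_m$; this follows from the manifest nonnegativity of $\mu_{n,m}^* = \frac{e_n}{n}\mu_{n+1,m} = \frac{e_n}{n}\,\mathbb{E}(X_{n+1,m}) \ge 0$. Hence the proof is a two-line invocation of the preceding lemma.
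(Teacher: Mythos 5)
Your proof is correct and is exactly the paper's argument: the corollary is stated there as an immediate application of Lemma~\ref{lmm-at} to the recurrence \eqref{mu-rr} with $b_{n,m}=\frac1n$ (so $c=1$) and $\mu_{n,0}^*=0$ (so $d=0$). Your extra remark that nonnegativity of $\mu_{n,m}^*$ converts the absolute-value bound into the one-sided inequality is a sensible explicit touch that the paper leaves implicit.
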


\begin{lmm} If $\phi$ is a $C^2[0,1]$-function,
then  \label{lmm-phi}
\begin{align*}
    \sum_{1\le \ell \le m}
    \lambda_{n,m,\ell}^*\left(
    \phi\left(\frac mn\right)-
    \phi\left(\frac {m-\ell}n\right)\right)
    &= \frac{\phi'\left(\alpha\right)}{n}
    \sum_{1\le \ell \le m} \ell
    \lambda_{n,m,\ell}^* +O\left(n^{-2}\right),
\end{align*}
uniformly for $1\le m\le n$.
\end{lmm}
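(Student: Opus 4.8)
The plan is to Taylor-expand $\phi$ around the point $\alpha=\frac mn$ to second order, controlling the remainder by the $C^2$-bound, and then recognize the resulting sums as the moments $\thickbar{\Lambda}_{n,m}^{(r)}$ that were estimated in Corollary~\ref{cor:Ur}. Concretely, for each $1\le\ell\le m$ write
\[
    \phi\left(\frac mn\right)-\phi\left(\frac{m-\ell}n\right)
    = \frac{\ell}{n}\,\phi'(\alpha)
    - \frac{\ell^2}{2n^2}\,\phi''(\xi_{n,m,\ell}),
\]
for some intermediate point $\xi_{n,m,\ell}\in\left[\frac{m-\ell}n,\frac mn\right]\subseteq[0,1]$, by Taylor's theorem with Lagrange remainder (only $C^2$ regularity is needed for this form). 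Multiplying by $\lambda_{n,m,\ell}^*$ and summing gives the main term $\frac{\phi'(\alpha)}{n}\sum_{\ell}\ell\lambda_{n,m,\ell}^*$ exactly, plus an error term $-\frac1{2n^2}\sum_{1\le\ell\le m}\ell^2\phi''(\xi_{n,m,\ell})\lambda_{n,m,\ell}^*$.

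The remaining task is to show this error term is $O(n^{-2})$ uniformly in $m$. Since $\phi\in C^2[0,1]$, we have $|\phi''|\le M$ on $[0,1]$ for some constant $M$, so the error is bounded in absolute value by $\frac{M}{2n^2}\sum_{1\le\ell\le m}\ell^2\lambda_{n,m,\ell}^* = \frac{M}{2n^2}\thickbar{\Lambda}_{n,m}^{(2)}$. By the uniform bound \eqref{L-ub} (with $r=2$), $\thickbar{\Lambda}_{n,m}^{(2)}\asymp\alpha\le 1$ uniformly for $1\le m\le n$, hence the error term is $O(n^{-2})$ uniformly in the stated range. This completes the argument; no delicate cancellation is needed because the crude bound $\thickbar{\Lambda}_{n,m}^{(2)}=O(1)$ already suffices. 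One should take a moment to check that $\xi_{n,m,\ell}$ indeed lies in $[0,1]$ so that the bound $|\phi''(\xi_{n,m,\ell})|\le M$ applies, which is immediate since $0\le\frac{m-\ell}n\le\frac mn\le 1$.

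The only point requiring a little care — and the closest thing to an obstacle here — is the uniformity of the remainder estimate over all $m$ simultaneously, rather than for each fixed $m$; this is exactly what \eqref{L-ub} delivers, so invoking the corollary cleanly is the crux. (An alternative, slightly more refined route is to use the full expansion $\thickbar{\Lambda}_{n,m}^{(2)}=S_2(\alpha)+O(\alpha n^{-1})$ from \eqref{Lmbd}, but that extra precision is not needed for the statement as phrased.) It may also be worth remarking that if one wanted the error term displayed more explicitly as $O(\alpha n^{-2})$ — matching the scale of the main term when $\alpha$ is small — one would instead bound $|\phi''(\xi)|$ by $M$ and use $\thickbar{\Lambda}_{n,m}^{(2)}\asymp\alpha$; I would mention this strengthening in passing since it is used implicitly when the lemma is applied to the recurrence \eqref{mu-rr} in the proof of Theorem~\ref{thm:mu}.
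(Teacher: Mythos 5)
Your proof is correct and follows essentially the same route as the paper: a second-order Taylor expansion with the remainder controlled by the uniform bound on $\phi''$ over $[0,1]$, followed by the estimate $\thickbar{\Lambda}_{n,m}^{(2)}=O(1)$ (the paper invokes \eqref{Lmbd}, you invoke \eqref{L-ub}; both deliver the same uniformity). The closing remark that the error is in fact $O(\alpha n^{-2})$ is a valid and harmless strengthening.
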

\pf A direct Taylor expansion with remainder gives
\[
    \phi(\alpha) -\phi\left(\alpha-\tfrac\ell n\right)
    = \phi'(\alpha)\tfrac\ell n + O\left(\ell^2 n^{-2}\right),
\]
uniformly for $1\le \ell\le m$, since $\phi''(t) = O(1)$ for
$t\in[0,1]$. The lemma follows from the estimates \eqref{Lmbd}. \qed

The approximation can be easily extended and refined if more
smoothness properties of $\phi$ are known, which is the case for all
functions appearing in our analysis (they are all $C^\infty[0,1]$).

Another standard technique we need is Stirling's formula for the
factorials
\begin{equation}\label{eqn:StirForm}
    \log n! = \log \Gamma(n+1)
    = \left(n+\tfrac{1}{2}\right) \log n -n +
    \tfrac{1}{2} \log(2 \pi) + \tfrac{1}{12} \, n^{-1}
    + O(n^{-3}),
\end{equation}
where $\Gamma$ denotes Euler's Gamma function.

\subsection{Proof of Theorem~\ref{thm:mu}}

\paragraph{Formal calculus.}

Applying formally \eqref{mu-phi-1} and Lemma~\ref{lmm-phi} using
$H_{m} - H_{m-\ell} \sim \frac{\ell}{m}$ and $\phi(\frac{m}{n}) -
\phi(\frac{m-\ell}{n}) \sim \phi'(\alpha) \frac{\ell}{n}$, we have
\begin{align*}
    \frac1n=\sum_{1\le \ell\le m}\lambda_{n,m,\ell}^*
    \left(\mu_{n,m}^* - \mu_{n,m-\ell}^*\right)
    &\sim \sum_{1\le \ell\le m}\lambda_{n,m,\ell}^*
    \left(\frac{\ell}{m} + \phi'(\alpha)
    \frac{\ell}{n}\right) \\
    &\sim \frac1n\left(\frac1\alpha+\phi'(\alpha)\right)
    S_1(\alpha),
\end{align*}
by \eqref{Lmbd}. Thus we see that $\phi$ satisfies
\[
    \phi'(z) = \frac1{S_1(z)}-\frac1z.
\]
We now specify the initial condition $\phi(0)$. Since the postulated
form \eqref{mu-phi-1} holds for $1\le m\le n$ (indeed also true for
$m=0$), we take $m=1$ and see that $\phi(0)=0$ because
$\mu_{n,1}^*=1$. This implies that $\phi=\phi_1$. The first few
terms in the Taylor expansion of $\phi_1(\alpha)$ read as follows.
\begin{align}\label{phi_1-z}
\begin{split}
    \phi_1(z) &= -\tfrac32 z + \tfrac{11}{12} z^2
    - \tfrac{283}{432} z^3 + \tfrac{5759}{11520} z^4 -
    \tfrac{57137}{144000}z^5
    +\tfrac{2353751}{7257600} z^6+\cdots,
\end{split}
\end{align}
which can then be checked with the explicit expressions of
$\mu_{n,m}^*$ for small $m$ (see Section~\ref{sec:ae-small-m}).

\paragraph{Error analysis.}
To justify the form \eqref{mu-phi-1} (with $\phi = \phi_1$), we
consider the difference
\[
    \Delta_{n,m}^* := \mu_{n,m}^* - H_m - \phi_1(\alpha),
\]
which satisfies the recurrence
\[
    \sum_{1\le \ell \le m} \lambda_{n,m,\ell}^*
    \left(\Delta_{n,m}^* -\Delta_{n,m-\ell}^*\right)
    = E_{1}(n,m),
\]
where
\[
    E_{1}(n,m) := \frac{1}{n} - \sum_{1\le \ell \le m}
    \lambda_{n,m,\ell}^*
    \left(H_m-H_{m-\ell} +\phi_1(\alpha)-
    \phi_1\left(\alpha-\tfrac\ell n\right)\right).
\]
By the asymptotic relation \eqref{Lmbd} with $r=1$ and the
definition of $\phi_1$, we have
\[
    \frac{1}{n} = \sum_{1\le \ell\le m} \lambda_{n,m,\ell}^*
    \left(\tfrac\ell m
    +\phi_1'(\alpha)\tfrac\ell n\right)
    +O(n^{-2}),
\]
and thus
\[
    E_{1}(n,m) = -\sum_{1\le \ell \le m} \lambda_{n,m,\ell}^*
    \left(H_m-H_{m-\ell} -\tfrac\ell m+\phi_1(\alpha)-
    \phi_1\left(\alpha-\tfrac\ell n\right)-
    \phi_1'(\alpha)\tfrac\ell n \right) +O(n^{-2})
\]

By Lemma~\ref{lmm-phi}, we see that
\[
    \sum_{1\le \ell \le m} \lambda_{n,m,\ell}^*
    \left(\phi_1(\alpha)-
    \phi_1\left(\alpha-\tfrac\ell n\right)
    -\phi'(\alpha)\tfrac\ell n \right)
    = O(n^{-2}),
\]
uniformly for $1\le m\le n$. On the other hand, we have the upper
bounds
\begin{align*}
    H_m - H_{m-\ell} - \tfrac\ell m
    = \begin{cases}
        O\left(\ell^2 m^{-2}\right),& \text{if } \ell=o(m),\\
        O(H_m), & \text{for }1\le \ell \le m.
    \end{cases}
\end{align*}
Note that the first estimate is only uniform for $1\le \ell=o(m)$.
When $\ell$ is close to $m$, say $m-\ell=O(m^{1-\ve})$, the
left-hand side blows up with $m$ but the right-hand side
$O\left(\ell^2 m^{-2}\right)$ remains bounded. Thus we split the sum
at $\cl{\sqrt{m}}$ and then obtain ($H_m-H_{m-\ell}-\frac\ell m=0$
when $\ell=1$)
\begin{align}\label{harmonic-split}
\begin{split}
    & \sum_{2\le \ell \le m} \lambda_{n,m,\ell}^*
    \left(H_m-H_{m-\ell} -\tfrac\ell m\right)\\
    &\qquad = O\left(m^{-2}\sum_{1\le \ell \le \cl{\sqrt{m}}}
    \ell^2\lambda_{n,m,\ell}^*+ H_m
    \sum_{\sqrt{m}+1\le \ell\le m}\lambda_{n,m,\ell}^* \right).
\end{split}
\end{align}
Now, by \eqref{lmbd-star},
\begin{align}\label{small-ell}
\begin{split}
    m^{-2} \sum\limits_{2 \le \ell \le m}
    \ell^{2} \lambda_{n,m,\ell}^*
    & = O\left(m^{-2}
    \sum_{j\ge 0}\frac{(1-\alpha)^j}{j!}
    \sum_{j+2\le \ell\le m}
    \frac{(j+\ell)^{2}\alpha^{\ell}}{\ell!}  \right) \\
    &= O\left(m^{-2} \alpha^2\right)
    =O\left(n^{-2}\right),
\end{split}
\end{align}
and
\begin{align}\label{large-ell}
\begin{split}
    H_m \sum\limits_{\sqrt{m}+1 \le \ell \le m}
    \lambda_{n,m,\ell}^* & = O\left(H_m
    \sum_{j\ge 0}\frac{(1-\alpha)^j}{j!}
    \sum_{j+\sqrt{m}+1\le \ell\le m}
    \frac{\alpha^{\ell}}{\ell!}  \right) \\
    &= O\left(H_m \sum_{\ell \ge \sqrt{m}+1}
    \frac{\alpha^\ell}{\ell!} \right)
    = O\left(\frac{H_m \alpha^{\sqrt{m}+1}}
    {\Gamma(\sqrt{m}+2)} \right).
\end{split}
\end{align}
By Stirling's formula~\eqref{eqn:StirForm}, the last $O$-term is of
order
\[
    \frac{m^{\frac14} \, H_m}{n} \,
    e^{-\sqrt{m}(\log n-\frac12\log m -1)}
    = O(n^{-2}),
\]
for $m\ge 1$. Combining these estimates, we then obtain
\[
    E_{1}(n,m) = O(n^{-2}),
\]
uniformly for $1\le m\le n$. Thus $\Delta_{n,m} := n \,
\Delta_{n,m}^*$ satisfies a recurrence of the form
\[
    \sum_{1\le \ell \le m} \lambda_{n,m,\ell}^*
    \left(\Delta_{n,m} -\Delta_{n,m-\ell}\right)
    = O(n^{-1}) \qquad(1\le m\le n),
\]
with $\Delta_{n,0}=0$. It follows, by applying Lemma~\ref{lmm-at},
that $\Delta_{n,m} = O(H_m)$, and we conclude that, uniformly for $0
\le m \le n$,
\[
    \mu_{n,m}^* = H_m + \phi_1(\alpha) +O\left(n^{-1}H_m\right).
\]
This proves the first two terms of the asymptotic approximation to
$\mu_{n,m}^*$ in \eqref{mu-asymp}. The more refined expansion is
obtained by refining the same calculations and justification, which
we carry out the main steps subsequently.

\paragraph{Refined computations.}
We consider now the difference
\begin{equation*}
    \Delta_{n,m}^*
    := \mu_{n,m}^* - \left(H_{m}+\phi_{1}(\alpha)\right)
    - \frac{1}{n} \left(b_{1} H_{m} + \phi_{2}(\alpha)\right),
\end{equation*}
and will determine the constant $b_{1}$ and the function
$\phi_{2}(z)$ such that
\begin{equation}\label{eqn:Delta_bound}
    \Delta_{n,m}^* = O(n^{-2} H_{m}),
\end{equation}
uniformly for $1 \le m \le n$, which then proves
Theorem~\ref{thm:mu}. By \eqref{mu-rr}, $\Delta_{n,m}^*$ satisfies,
for $1 \le m \le n$, the recurrence
\begin{equation}\label{eqn:recDelta}
    \sum_{1\le \ell \le m} \lambda_{n,m,\ell}^*
    \left(\Delta_{n,m}^* -\Delta_{n,m-\ell}^*\right)
    = E_{2}(n,m),
\end{equation}
where
\begin{align*}
    E_{2}(n,m) & := \frac{1}{n} -
    \sum_{1 \le \ell \le m} \lambda_{n,m,\ell}^* \,
    (H_{m}-H_{m-\ell})
    - \sum_{1 \le \ell \le m}
    \lambda_{n,m,\ell}^* \left(\phi_{1}\Big(\frac{m}{n}\Big) -
    \phi_{1}\Big(\frac{m-\ell}{n}\Big)\right)\\
    & \quad \mbox{} - \frac{b_{1}}{n}
    \sum_{1 \le \ell \le m}
    \lambda_{n,m,\ell}^* \, (H_{m}-H_{m-\ell})
    - \frac{1}{n} \sum_{1 \le \ell \le m}
    \lambda_{n,m,\ell}^*
    \left(\phi_{2}\Big(\frac{m}{n}\Big) -
    \phi_{2}\Big(\frac{m-\ell}{n}\Big)\right).
\end{align*}
In particular, $\Delta_{n,0}^* = - \frac{\phi_{2}(0)}{n}$.

The hard part here is to derive an asymptotic expansion for
$E_{2}(n,m)$ that holds uniformly for $1 \le m \le n$ as $n \to
\infty$. To that purpose, we first extend Lemma~\ref{lmm-phi} by
using a Taylor expansion of third order for a
$C^{\infty}[0,1]$-function $\phi(z)$, which then gives, uniformly
for $1 \le m \le n$,
\begin{align}
    &\sum_{1 \le \ell \le m} \lambda_{n,m,\ell}^*
    \left(\phi\Big(\frac{m}{n}\Big)
    - \phi\Big(\frac{m-\ell}{n}\Big)\right)\notag \\
    &\qquad= \frac{\phi'(\alpha)}{n} \sum_{1 \le \ell \le m}
    \ell \lambda_{n,m,\ell}^* - \frac{\phi''(\alpha)}{2 n^{2}}
    \sum_{1 \le \ell \le m} \ell^{2} \lambda_{n,m,\ell}^*
    + O(n^{-3})\notag \\
    &\qquad = \frac{\phi'(\alpha)}{n}\,S_1(\alpha)
    +\frac1{2n^2}\left(2\phi'(\alpha)U_1(\alpha)
    -\phi''(\alpha)S_2(\alpha)\right)+O(n^{-3}),
    \label{eqn:phi_expansion}
\end{align}
where we used Corollary~\ref{cor:Ur}.

We now examine weighted sums involving the difference of the
harmonic numbers. We start with the following identity whose proof
is straightforward. For a given function $f(x)$, let $\nabla$ denote
the backward difference operator $\nabla f(x) = f(x) - f(x-1)$. Then
for $0 \le \ell \le m$
\begin{equation*}
    f(m) + f(m-1) + \cdots + f(m-\ell+1)
    = \sum_{1\le k\le m}
    \binom{\ell}{k} (-1)^{k-1} \nabla^{k-1} f(m).
\end{equation*}
Note that the sum vanishes for $k>\ell$. Take $f(x) = \frac{1}{x}$.
Then we obtain
\begin{equation*}
    H_{m} - H_{m-\ell}
    = \sum_{k=1}^{m} \frac{\ell(\ell-1)\cdots(\ell-k+1)}
    {k m (m-1)\cdots(m-k+1)},
\end{equation*}
for $0 \le \ell \le m$. This relation implies that, for $0 \le \ell
\le \frac m2$ and $m \ge 1$,
\begin{equation*}
    H_{m} - H_{m-\ell} = \frac{\ell}{m} +
    \frac{\dbbracket{m \ge 2} \, \ell (\ell-1)}{2m(m-1)}
    +O\Big(\frac{\ell(\ell-1)(\ell-2)}{m^{3}}\Big).
\end{equation*}
By the same argument we used above for \eqref{harmonic-split}, we
get the expansion
\begin{align}
    & \sum_{1 \le \ell \le m}
    \lambda_{n,m,\ell}^* \, (H_{m}-H_{m-\ell})\notag\\
    & \qquad = \sum_{1 \le \ell \le m} \left(\frac{\ell}{m}
    + \frac{\dbbracket{m \ge 2} \, \ell(\ell-1)}{2m(m-1)}
    + O\Big(\frac{\ell(\ell-1)(\ell-2)}{m^{3}}\Big)\right)
    \lambda_{n,m,\ell}^* + O(n^{-3})\notag\\
    & \qquad = \frac{1}{m} \sum_{1 \le \ell \le m}
    \ell \lambda_{n,m,\ell}^*
    + \frac{\dbbracket{m \ge 2}}{2m(m-1)}
    \sum_{1 \le \ell \le m} \ell(\ell-1)
    \lambda_{n,m,\ell}^* + O(n^{-3})\notag\\
    &\qquad =\frac{S_{1}(\alpha)}{\alpha n}
    +\frac{1}{2n^{2}} \left(\frac{U_1(\alpha)}{\alpha} +
    \frac{S_{2}(\alpha)-S_{1}(\alpha)}{\alpha^{2}}\right)
    - \frac{\dbbracket{m=1}}{2n^{2}} + O(n^{-3}),
    \label{eqn:harmonic_expansion}
\end{align}
which holds uniformly for $1 \le m \le n$. Note that for $m=1$ a
correction term is needed; more correction terms have to be
introduced in more refined expansions (see Section~\ref{sec:mu-ae}).

Combining the estimates \eqref{eqn:phi_expansion} (with
$\phi=\phi_1, \phi_2$) and \eqref{eqn:harmonic_expansion}, we see
that
\begin{equation*}
    E_{2}(n,m) = \frac{J_{1}(\alpha)}{n}
    + \frac{J_{2}(\alpha)}{2n^{2}}
    + \frac{\dbbracket{m=1}}{2n^{2}} + O(n^{-3}),
\end{equation*}
uniformly for $1\le m\le n$, where
\begin{align*}
    J_{1}(z) & = 1 - \frac{S_{1}(z)}{z} - \phi_{1}'(z) S_{1}(z),\\
    J_{2}(z) & = - \frac{S_{2}(z)-S_{1}(z)}{z^{2}}
    -\frac{2b_{1}}{z}\,S_{1}(z)
    -\left(\frac{2}{z}+2\phi_1'(z)\right)U_1(z)\\
    & \quad \mbox{} + \phi_{1}''(z) S_{2}(z) - 2 \phi_{2}'(z) S_{1}(z).
\end{align*}
Obviously, $J_{1}(z) = 0$ because $\phi_{1}'(z) =
\frac{1}{S_{1}(z)}-\frac{1}{z}$. To determine $b_1$ and $\phi_2$, we
observe that
\[
    \lim_{z\to0} J_2(z) =
    \lim_{z\to0} \left(- \frac{S_{2}(z)-S_{1}(z)}{z^{2}}
    -\frac{2b_{1}}{z}\,S_{1}(z)
    -\frac{2}{z}\,U_1(z)\right) = 2b_1-2,
\]
where we used the relation $U_1(\alpha)= -S_0(\alpha)-\frac12
S_1(\alpha)$ (see \eqref{Ura}). In order that $E_2=o(n^{-2})$
uniformly for $1\le m\le n$, we need $2b_1-2=0$, so that $b_1=1$.

Now the equation $J_{2}(z)=0$ also implies, by \eqref{Ura}, that
\begin{equation}\label{phi-2-diff}
    \phi_{2}'(z) =
    - \frac{S_{1}'(z) S_{2}(z)}{2 S_{1}^{3}(z)}
    +\frac{S_0(z)}{S_1(z)^2}+\frac1{2S_1(z)}
    + \frac{1}{2z^{2}} - \frac{1}{z}.
\end{equation}

With these choices of $b_{1}$ and $\phi_{2}(z)$, we have
\begin{equation*}
    E_{2}(n,m) = \frac{\dbbracket{m=1}}{2n^{2}} + O(n^{-3}),
\end{equation*}
uniformly for $1 \le m \le n$.

The exact solution to the differential equation \eqref{phi-2-diff}
requires the constant term $\phi_{2}(0)$, which we have not yet 
specified. To specify this value, we take $m=1$ in
\eqref{eqn:Delta_bound} and then obtain, by the
recurrence~\eqref{eqn:recDelta},
\begin{equation*}
    \Delta_{n,1}^* = \Delta_{n,0}^* +
    \frac{E_{2}(n,1)}{\thickbar{\Lambda}_{n,1}}
    = -\frac{\phi_{2}(0)}{n} + n E_{2}(n,1)
    = -\frac{\phi_{2}(0)}{n} + \frac{1}{2n} + O(n^{-2}).
\end{equation*}
This entails the choice $\phi_{2}(0)=\frac{1}{2}$ in order that
$\Delta_{n,1}^* = O(n^{-2})$. Thus we obtain the integral solution
\eqref{phi-2} for $\phi_2(z)$. In particular, the first few terms of
$\phi_{2}(z)$ in the Taylor expansion are given as follows.
\begin{align*}
    \phi_2(z) = \tfrac12-\tfrac74 z
    +\tfrac{23}{18}z^{2}
    -\tfrac{19951}{17280}z^{3}
    +\tfrac{64903}{57600}z^{4}
    -\tfrac{13803863}{12096000}z^{5}
    +\cdots.
\end{align*}
As a function in the complex plane, the region where $\phi_2(z)$ is
analytic is dictated by the first zeros of $S_1(z)$, which exceeds
unity.

To complete the proof of \eqref{eqn:Delta_bound}, we require a
variation of Lemma~\ref{lmm-at}, since the assumption on $a_{n,0}$
given there is not satisfied here.
\begin{lmm} \label{lmm:rec_variation}
Consider the recurrence \label{lmm-at-variation}
\[
    \sum_{1\le \ell \le m} \lambda_{n,m,\ell}^* \,
    (a_{n,m}-a_{n,m-\ell}) = b_{n,m}\qquad (m\ge1),
\]
where $b_{n,m}$ is defined for $1\le m\le n$ and $n\ge1$. Assume
that $|a_{n,0}| \le c n$ for $n \ge 1$, and $|a_{n,1}| \le 2c$ for
$n \ge 1$. If there exists a $c>0$ such that $|b_{n,m}|\le \frac cn$
holds uniformly for $2\le m\le n$ and $n\ge1$, then
\begin{align}\label{anm-2cH}
    |a_{n,m}| \le 2c H_m \qquad(1 \le m\le n).
\end{align}
\end{lmm}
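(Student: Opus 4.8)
The plan is to mimic the proof of Lemma~\ref{lmm-at}, carrying out an induction on $m$ but with a modified base of the induction that accounts for the weaker hypotheses on $a_{n,0}$ and $a_{n,1}$. Recall from that earlier proof the elementary lower bound $\thickbar{\Lambda}_{n,m}=\sum_{1\le\ell\le m}\lambda_{n,m,\ell}^*\ge\frac mn$, valid for $1\le m\le n$, obtained by retaining only the term $j=0,\ell=1$ in \eqref{lmbd-star}. Rewriting the recurrence as
\[
    a_{n,m} = \frac{b_{n,m}}{\thickbar{\Lambda}_{n,m}}
    + \frac{1}{\thickbar{\Lambda}_{n,m}}
    \sum_{1\le\ell\le m-1}\lambda_{n,m,\ell}^*\,a_{n,m-\ell},
\]
and using that the coefficients $\lambda_{n,m,\ell}^*/\thickbar{\Lambda}_{n,m}$ for $1\le\ell\le m-1$ together with the ``missing mass'' sum to at most $1$, one sees that $a_{n,m}$ is bounded by $|b_{n,m}|/\thickbar{\Lambda}_{n,m}$ plus a convex-type combination of the previous values $|a_{n,m-\ell}|$. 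The only subtlety compared with Lemma~\ref{lmm-at} is that the $\ell=m$ term pulls in $a_{n,0}$, which is now only $O(cn)$; but this term carries the coefficient $\lambda_{n,m,m}^*/\thickbar{\Lambda}_{n,m}$, and from \eqref{lmbd-star} we have $\lambda_{n,m,m}^*=n^{-m}$, so this coefficient is at most $n^{-m}\cdot\frac nm = n^{1-m}/m$, which for $m\ge2$ is $O(n^{-1})$ and hence contributes only $O(c)$ after multiplying by $|a_{n,0}|\le cn$.

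First I would dispose of the base case $m=1$, which is given outright: $|a_{n,1}|\le 2c = 2cH_1$. Then I would set up the induction for $m\ge2$, assuming $|a_{n,j}|\le 2cH_j$ for all $1\le j\le m-1$ and $|a_{n,0}|\le cn$. Splitting off the $\ell=m$ term, I would estimate
\[
    |a_{n,m}| \le \frac{|b_{n,m}|}{\thickbar{\Lambda}_{n,m}}
    + \frac{\lambda_{n,m,m}^*}{\thickbar{\Lambda}_{n,m}}\,|a_{n,0}|
    + \frac{1}{\thickbar{\Lambda}_{n,m}}
    \sum_{1\le\ell\le m-1}\lambda_{n,m,\ell}^*\,|a_{n,m-\ell}|.
\]
For the first term, $|b_{n,m}|\le c/n$ and $\thickbar{\Lambda}_{n,m}\ge m/n$ give a bound $c/m$. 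For the second term, the estimate $\lambda_{n,m,m}^*=n^{-m}$ and $\thickbar{\Lambda}_{n,m}\ge m/n$ together with $|a_{n,0}|\le cn$ give at most $cn^{2-m}/m\le c/m$ for $m\ge2$ (since $n\ge m\ge2$ forces $n^{2-m}\le n^0=1$ when $m=2$ and is even smaller for larger $m$; a cleaner uniform bound is $n^{2-m}\le 1$). For the third term, I would use the monotonicity $H_{m-\ell}\le H_{m-1}$ to pull $2cH_{m-1}$ out of the sum and bound what remains by $\sum_{1\le\ell\le m-1}\lambda_{n,m,\ell}^*/\thickbar{\Lambda}_{n,m}\le 1$, giving at most $2cH_{m-1}$. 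Adding up, $|a_{n,m}|\le 2c/m + 2cH_{m-1} = 2cH_m$, completing the induction.

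The main obstacle — really the only place care is needed — is the bookkeeping of the $a_{n,0}$ contribution, which is why the hypothesis restricts to $m\ge2$ in the bound on $b_{n,m}$ and supplies $|a_{n,1}|\le 2c$ separately: at $m=1$ the recurrence reads $\thickbar{\Lambda}_{n,1}(a_{n,1}-a_{n,0})=b_{n,1}$ with $\thickbar{\Lambda}_{n,1}=\lambda_{n,1,1}^*=\frac1n$, so $a_{n,1}=a_{n,0}+nb_{n,1}$, and with $|a_{n,0}|$ only $O(n)$ this cannot be controlled by the general mechanism; hence it is hypothesized directly. One should double-check the inequality $cn^{2-m}/m\le c/m$ is what is wanted: for $m=2$, $n^{2-m}=n^0=1$, so the term is exactly $c/2\le c/m$; for $m\ge3$, $n^{2-m}<1$, so the bound holds a fortiori. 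Everything else is the same convexity argument as in Lemma~\ref{lmm-at}. \qed
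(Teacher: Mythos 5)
Your proposal is correct and follows essentially the same route as the paper's own proof: the same induction on $m$ with the $m=1$ case supplied by hypothesis, the same splitting-off of the $\ell=m$ term to isolate $a_{n,0}$, and the same three bounds $\thickbar{\Lambda}_{n,m}\ge\frac mn$, $\lambda_{n,m,m}^*=n^{-m}$, and the convex-combination estimate for the intermediate terms, yielding $2cH_{m-1}+\frac{2c}{m}=2cH_m$. The only cosmetic difference is that you invoke $n^{2-m}\le1$ directly where the paper uses the intermediate bound $\lambda_{n,m,m}^*\le n^{-2}$; the arithmetic is identical.
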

\pf The inequality \eqref{anm-2cH} holds when $m=1$ by assumption.
For $m \ge 2$, we write the recurrence as follows
\begin{align*}
    a_{n,m}
    = \frac{1}{\thickbar{\Lambda}_{n,m}}
    \sum_{1 \le \ell <m} \lambda_{n,m,\ell}^* \, a_{n,m-\ell} +
    \frac{\lambda_{n,m,m}^* a_{n,0}}{\thickbar{\Lambda}_{n,m}} +
    \frac{b_{n,m}}{\thickbar{\Lambda}_{n,m}}.
\end{align*}
By induction hypothesis and the two inequalities (see
Lemma~\ref{lmm-at})
\begin{equation*}
    \thickbar{\Lambda}_{n,m} \ge \frac{m}{n},\quad
    \text{and}\quad
    \lambda_{n,m,m}^* = n^{-m} \le n^{-2},
\end{equation*}
we obtain
\begin{equation*}
    |a_{n,m}| \le 2c H_{m-1} + \frac{n}{m}
    \cdot \frac{1}{n^{2}} \cdot cn + \frac{n}{m}
    \cdot \frac{c}{n} \le 2c H_{m},
\end{equation*}
and this proves the lemma. \qed

In view of the estimates $\Delta_{n,0}^*=O(n^{-1})$,
$\Delta_{n,1}^*=O(n^{-2})$ and $E_{2}(n,m)=O(n^{-3})$, for $2 \le m
\le n$, there exists a constant $c>0$ such that the quantity
$\Delta_{n,m} := n^{2} \Delta_{n,m}^*$ satisfies the assumptions of
Lemma~\ref{lmm:rec_variation}, which implies the bound $\Delta_{n,m}
= O(H_{m})$, or, equivalently $\Delta_{n,m}^* = O(n^{-2} H_{m})$,
uniformly for $1 \le m \le n$. This completes the proof of
Theorem~\ref{thm:mu}. \qed

\subsection{An asymptotic expansion for the mean}
\label{sec:mu-ae}

The above procedure can be extended to get more smaller-order terms,
but the expressions for the coefficients soon become very involved.
However, it follows from the discussions in \S~\ref{sec:ae-small-m}
that we expect the asymptotic expansion
\begin{equation}\label{mu-n-star-ae}
    \mu_{n,m}^* \sim \sum_{k \ge 0}\frac{b_k H_m +
    \phi_{k+1}(\alpha)}{n^k},
\end{equation}
in the sense that the truncated asymptotic expansion
\begin{align} \label{mu-truncated}
    \mu_{n,m}^* = \sum_{0\le k\le K}\frac{b_k H_m +
    \phi_{k+1}(\alpha)}{n^k} +O\left(n^{-K-1}H_m\right)
\end{align}
holds uniformly for $K\le m\le n$ and introduces an error of order
$n^{-K-1}H_m$. This asymptotic approximation may not hold when $1\le
m<K$ because additional correction terms are needed in that case.
Technically, the correction terms stem from asymptotic expansions
for sums of the form $\sum_{1 \le \ell \le m} \lambda_{n,m}^*
(H_{m}-H_{m-\ell})$; see \eqref{eqn:harmonic_expansion} and the
comments given there.

We propose here an easily codable procedure for the coefficients in
the expansion, whose justification follows the same error analysis
as above. We start with the formal expansion \eqref{mu-n-star-ae}
and expand in all terms for large $m=\alpha n$ in decreasing powers
of $n$, match the coefficients of $n^{-K-1}$ on both sides for each
$K \ge 0$, and then adjust the initial condition $\phi_{K+1}(0)$ by
taking into account the extremal case when $m=K$ (for $m<K$ the
expansion up to that order may not hold). With this algorithmic
approach it is possible to determine the coefficients $b_{K}$ and
the functions $\phi_{K+1}(z)$ successively one after another.

Observe first that
\begin{align*}
    H_m-H_{m-\ell} = \sum_{0\le j<\ell}\frac1{m-j}
    = \sum_{r\ge1}m^{-r} \beta_r(\ell)
    = \sum_{r\ge1}n^{-r} \alpha^{-r}\beta_r(\ell) ,
\end{align*}
where ($0^0=1$)
\[
    \beta_r(\ell) := \sum_{0\le j<\ell} j^{r-1}
    = \frac1r\sum_{0\le j<r} \binom{r}{j}
    B_j \ell^{r-j},
\]
the $B_j$ representing the Bernoulli numbers. On the other hand,
\begin{align*}
    \phi_{k+1}(\alpha)-
    \phi_{k+1}\left(\alpha-\frac {\ell}n\right)
    = -\sum_{r\ge1}\frac{\phi_{k+1}^{(r)}(\alpha)}{r!}\,
    \left(-\frac{\ell}n\right)^r.
\end{align*}
Thus
\begin{align*}
    \mu_{n,m}^*  - \mu_{n,m-\ell}^*
    &\sim \sum_{r\ge1}n^{-r}\sum_{1\le j\le r}
    \left(\frac{\beta_j(\ell)b_{r-j}}{\alpha^j (r-j)!}
    - \frac{(-\ell)^j}{j!}\,\phi_{r-j+1}^{(j)}(\alpha) \right).
\end{align*}
Then, by \eqref{lnml},
\[
    \sum_{1\le \ell \le m}\lambda_{n,m,\ell}^*
    \left(\mu_{n,m}^*  - \mu_{n,m-\ell}^*\right)
    \sim \sum_{r\ge1} n^{-r}
    [t^{-1}] \left(1+\frac1{nt}\right)^m
    \left(1+\frac tn\right)^{n+1-m} f_r(t),
\]
where
\[
    f_r(t) := \sum_{\ell\ge 1} t^{\ell-1}\sum_{1\le j\le r}
    \left(\frac{\beta_j(\ell)b_{r-j}}{\alpha^j (r-j)!}
    - \frac{(-\ell)^j}{j!}\,\phi_{r-j+1}^{(j)}(\alpha) \right).
\]
Now
\[
    \left(1+\frac1{nt}\right)^m
    \left(1+\frac tn\right)^{n+1-m}
    = \exp\left(\sum_{j\ge1} \frac{(-1)^{j-1}}{j}
    \left(\frac{\alpha t^{-j}+(1-\alpha)t^j}{n^{j-1}}
    +\frac{t^j}{n^j}\right)\right).
\]
A direct expansion using Bell polynomials $B_{k}^*(t_1,\dots,t_k)$
(see \cite{Comtet74}) then gives
\begin{align*}
    \left(1+\frac1{nt}\right)^m
    \left(1+\frac tn\right)^{n+1-m}
    &=e^{\frac\alpha t+(1-\alpha)t}
    \sum_{k\ge0} \frac{B_{k}^*(t_1,\dots,t_k)}
    {k!}\, n^{-k}\\
    &= e^{\frac\alpha t+(1-\alpha)t}
    \sum_{k\ge0} \frac{\tilde{B}_{k}(\mathbf{t})}
    {k!t^{2k}}\, n^{-k}
\end{align*}
where $\tilde{B}_{0}=1$,
\[
    t_j := \frac{(-1)^jj!}{j+1}\left(
    \frac{\alpha}{t^{j+1}} + (1-\alpha) t^{j+1}\right)
    +(-1)^{j-1}(j-1)!t^j\qquad(j=1,2,\dots),
\]
and $\tilde{B}_k(\mathbf{t})$ is a polynomial of degree $4k$.

Collecting these expansions, we get
\begin{align*}
    \sum_{1\le \ell \le m}\lambda_{n,m,\ell}^*
    \left(\mu_{n,m}^*  - \mu_{n,m-\ell}^*\right)
    \sim \sum_{K \ge 0}n^{-(K+1)} \sum_{0 \le r \le K}
    [t^{-1}]e^{\frac\alpha t+(1-\alpha)t}
    \frac{\tilde{B}_r(\mathbf{t})}{r!}\,f_{K+1-r}(t).
\end{align*}

All terms now have the form
\begin{align*}
    [t^{2r-1}]e^{\frac\alpha t+(1-\alpha)t} F(t)
    &= \sum_{\ell\ge0}\frac{\alpha^\ell}{\ell!}
    [t^{\ell+2r-1}]e^{(1-\alpha)t}F(t) \\
    &=\sum_{\ell\ge0}\frac{\alpha^\ell}{\ell!}
    \sum_{0\le j<2r+\ell} \frac{(1-\alpha)^j}{j!}
    \cdot \frac{F^{(\ell+2r-1-j)}(0)
    }{(\ell+2r-1-j)!}.
\end{align*}
Since we are solving the recurrence
\[
    \sum_{1\le \ell \le m}\lambda_{n,m,\ell}^*
    \left(\mu_{n,m}^*  - \mu_{n,m-\ell}^*\right)
    =\frac1n,
\]
we have the relations
\[
    \begin{cases} \displaystyle
        \left(\frac{b_0}{\alpha}+\phi_1'(\alpha)\right)
        [t^{-1}]\frac{e^{\frac\alpha t+(1-\alpha)t}}
        {(1-t)^2} = 1,\\
    \displaystyle
        \sum_{0 \le r \le K}
        [t^{2r-1}]e^{\frac\alpha t+(1-\alpha)t}
        \frac{\tilde{B}_r(\mathbf{t})}{r!}\,f_{K+1-r}(t)
        = 0, \quad (K\ge1).
    \end{cases}
\]
By induction, each $\phi_{K+1}$ satisfies a differential equation of
the form
\[
    \left(\frac{b_{K}}{\alpha}
    +\phi_{K+1}'(\alpha)\right)S_K(\alpha)
    = \Psi_{K+1}[\phi_1,\dots,\phi_{K}](\alpha),
\]
for some functional $\Psi_{K+1}$. Since $S_1(\alpha)\sim \alpha$ as
$\alpha \to0$, we also have the relation
\[
    b_K=\Psi_{K+1}[\phi_1,\dots,\phi_{K}](0).
\]
Once the value of $b_{K}$ is determined, we can then write
\begin{equation*}
    \phi_{K+1}(\alpha) = \phi_{K+1}(0)+
    \int_0^\alpha \left(\frac{\Psi_{K+1}[\phi_1,\dots,\phi_{K}](x)}
    {S_{1}(x)} - \frac{b_{K}}{x}\right)\dd x,
\end{equation*}
and it remains to determine the initial value $\phi_{K+1}(0)$, which
is far from being obvious. The crucial property we need is that the
truncated expansion \eqref{mu-truncated} holds when $K\le m\le n$,
and particularly when $m=K$. So we compute \eqref{mu-truncated} with
$m=K$ and drop all terms of order smaller than or equal to
$n^{-K-1}$. Then we match the coefficient of $n^{-K}$ with that in
the expansion of $\mu_{n,K}^*$ obtained by a direct calculation from
the recurrence \eqref{mu-rr}.

We illustrate this procedure by computing the first two terms in
\eqref{mu-n-star-ae}. First, we have
\[
    \left(\frac{b_0}{\alpha}+\phi_1'(\alpha)\right) S_1(\alpha)
    =1,
\]
which implies $b_0=1$ and $\phi_1'(\alpha)=\frac1{S_1(\alpha)}-
\frac 1\alpha$. Moreover, substituting the initial value $m=K=0$, we
get
\begin{equation*}
    0 = \mu_{n,0}^* = H_{0} + \phi_{1}(0) + O(n^{-1})
    = \phi_{1}(0) + O(n^{-1}),
\end{equation*}
entailing $\phi_{1}(0)=0$, which is consistent with what we obtained
above.

The next-order term when $K=1$ is (after substituting the relations
$b_0=1$, $\phi_1'(\alpha)=\frac1{S_1(\alpha)}-\frac 1\alpha$ and
$\phi_1''(\alpha)=\frac1{\alpha^2}-\frac{S_1'(\alpha)}
{S_1(\alpha)^2}$)
\begin{align*}
    &\left(\frac{b_1}{\alpha}+\phi_2'(\alpha)\right)
    [t^{-1}]\frac{e^{\frac \alpha t+(1-\alpha)t}}
    {(1-t)^2} \\
    &\qquad = [t^{-1}]e^{\frac \alpha t+(1-\alpha)t}
    \left(\frac1{2\alpha^2(1-t)^2}
    -\frac{(1+t)S_1'(\alpha)}{2(1-t)^3S_1(\alpha)}
    +\frac{\alpha-2t^3+(1-\alpha)t^4}{2
    t^2(1-t)^2S_1(\alpha)}\right),
\end{align*}
implying that
\begin{align}\label{phi2-de}
    \left(\frac{b_1}{\alpha}+\phi_2'(\alpha)\right)S_1(\alpha)
    = - \frac{S_{1}'(\alpha) S_{2}(\alpha)}{2 S_{1}(\alpha)^2}
    +\frac{S_0(\alpha)}{S_1(\alpha)}+\frac1{2}
    + \frac{S_1(\alpha)}{2\alpha^{2}} .
\end{align}
As $\alpha\to0$, the right-hand side of \eqref{phi2-de} has the
local expansion $1-\frac14\alpha+ \cdots$, forcing $b_1=1$, and,
accordingly, we obtain the same differential equation
\eqref{phi-2-diff}. Substituting the value $m=K=1$ in
\eqref{mu-truncated} yields
\begin{align*}
    1 = \mu_{n,1}^* &= H_{1} + \phi_{1}\Big(\frac{1}{n}\Big)
    + \frac{1}{n}\left(H_{1} + \phi_{2}\Big(\frac{1}{n}\Big)\right)
    + O(n^{-2})\\
    &= 1 + \frac{\phi_{1}'(0)}{n}  + \frac{1}{n}
    + \frac{\phi_{2}(0)}{n}  + O(n^{-2}),
\end{align*}
implying, by using \eqref{phi_1-z}, $\phi_{2}(0) = -\phi_{1}'(0) - 1
= \frac12$, which is consistent with Theorem~\ref{thm:mu}.

Although the expressions become rather involved for higher-order
terms, all calculations (symbolic or numerical) are easily coded.
For example, we have
\begin{equation*}
    \phi_{3}(z) = \tfrac{1}{12} -
    \tfrac{575}{432}z + \tfrac{15101}{11520} z^{2}
    - \tfrac{8827}{5400} z^{3}
    + \tfrac{2229089}{1036800} z^{4}
    - \tfrac{361022171}{127008000} z^{5} + \cdots.
\end{equation*}

\subsection{Proof of Theorem~\ref{thm:EYn}}\label{sec:Proof_thm_EYn}

We now give an outline of the proof of Theorem~\ref{thm:EYn}
concerning the asymptotics of $\mathbb{E}(X_n)$. The method of proof
relies on standard normal approximation to the binomial
distribution.

We begin with
\[
    \mathbb{E}\left(t^{X_n}\right) = \sum_{0\le m\le n}
    \pi_{n,m} \, P_{n,m}(t),
\]
where $\pi_{n,m} := \binom{n}{m} \bar{\rho}^{n-m} \rho^m$
($\bar{\rho} := 1-\rho$). From this expression, we see that
\begin{equation*}
    \mathbb{E}(X_{n})
    = \sum_{0 \le m \le n} \pi_{n,m} \, \mu_{n,m}
    = \frac{n-1}{\left(1-\frac{1}{n}\right)^{n}}
    \sum_{0 \le m \le n} \pi_{n,m}  \mu_{n-1,m}^*.
\end{equation*}
Write $m=\rho n + x \sqrt{\rho\bar{\rho}n}$. By Stirling's
formula~\eqref{eqn:StirForm}, we have
\[
    \pi_{n,m} =
    \frac{e^{-x^2/2}}{\sqrt{2\pi \rho\bar{\rho}n}}\left(1+
    \frac{p_1(x)}{\sqrt{\rho\bar{\rho}n}}
    +\frac{p_2(x)}{\rho\bar{\rho}n}
    + \frac{p_3(x)}{(\rho\bar{\rho}n)^{3/2}}+
    O\left(\frac{1+x^{12}}{n^2}\right)\right),
\]
uniformly for $x=o(n^{\frac16})$, where, here and throughout the
proof, the $p_j$ are polynomials of $x$ containing only powers of
the same parity as $j$. On the other hand, by Theorem~\ref{thm:mu},
we have in the same range of $m$
\begin{align*}
    \mu_{n-1,m}^* &= \log \rho n + \gamma+\phi_1(\rho) +
    \frac{p_5(x)}{\sqrt{\rho\bar{\rho}n}}
    +\frac{2 \rho\bar{\rho} (\log \rho n + \gamma)
    +p_4(x)}{2 \rho\bar{\rho}n} \\
    &\qquad + \frac{p_7(x)}{(\rho\bar{\rho}n)^{3/2}}+
    O\left(\frac{\log n+x^{4}}{n^2}\right).
\end{align*}
With these expansions, the asymptotic evaluation of
$\mathbb{E}(X_{n})$ is reduced to sums of the form
\begin{equation*}
    \frac{1}{\sqrt{2\pi \rho\bar{\rho}n}}
    \sum_{x=\frac{m-\rho n}{\sqrt{\rho\bar{\rho}n}}
    = o(n^{\frac16})} x^{r} \, e^{-\frac{x^{2}}{2}}
    = \frac{1}{\sqrt{2 \pi}} \int_{-\infty}^{\infty}
    x^{r} e^{-\frac{x^{2}}{2}} \dd x + O(n^{-L}),
\end{equation*}
for any $L>1$ by an application of the Euler-Maclaurin formula. Thus
polynomials of odd indices (containing only odd powers of $x$)  will
lead to asymptotically negligible terms after integration. Outside
the range where $x=o(n^{\frac16})$, the binomial distribution is
smaller than any negative power of $n$, so the contribution from
this range is also asymptotically negligible. Except for this part,
all other steps are easily coded. \qed

\section{Asymptotics of the variance}
\label{sec:Var}

We prove in this section that the variance $\sigma_{n,m}^2 :=
\mathbb{V}(X_{n,m}) =
\mathbb{E}(X_{n,m}^2)-(\mathbb{E}(X_{n,m}))^{2}$ of $X_{n,m}$ is
asymptotically quadratic.
\begin{thm} For $1\le m\le n$, the variance of $X_{n,m}$
satisfies \label{thm:var1}
\begin{align*}
    \frac{\mathbb{V}(X_{n,m})}{en} &=
    eH_m^{(2)}n -(2e+1)H_m
    +eH_m^{(2)}+e \psi_1(\alpha) - \phi_{1}(\alpha)
    -\frac{11e+1}{2n}\,H_m\\
    &\quad  +\frac{5eH_m^{(2)} + 2e \psi_{2}(\alpha)
    - 2\phi_{2}(\alpha) + 2e \alpha \psi_{1}'(\alpha)
    -2 \alpha \phi_{1}'(\alpha) +\phi_{1}(\alpha)}{2n}
    + O\left(n^{-2}H_m\right),
\end{align*}
where
\begin{align}\label{psi-1}
    \psi_1(\alpha) = \int_0^\alpha
    \left(\frac{S_2(x)}{S_1(x)^3}- \frac{1}{x^2} + \frac{2}{x}\right)
    \dd x,
\end{align}
and
\begin{align*}
\begin{split}
    \psi_2(\alpha) &= \frac{7}{12}
    - \int_0^\alpha\left( \frac{5S_1'(x)S_2(x)^2}{2S_1(x)^5}
    -\frac{2S_1'(x)S_3(x) +S_2(x) S_2'(x)+6S_0(x) S_2(x)}{2S_1(x)^4}
    \right.\\
    &\hspace*{2.5cm}
    \left.-\frac{S_0(x)}{S_1(x)^3} +\frac{2}{S_1(x)^2}
    -\frac{1}{x^3}+\frac{3}{x^2} -\frac{11}{2x}\right)\dd x.
\end{split}
\end{align*}
\end{thm}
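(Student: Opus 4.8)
The plan is to reduce the variance to a linear recurrence of exactly the type already solved for the mean in Theorem~\ref{thm:mu}, and then rerun the Ansatz-plus-error-analysis scheme used there; the only genuinely new ingredient is the asymptotic evaluation of a right-hand side that is quadratic in the already-known mean. Put $V_{n,m}:=P_{n,m}''(1)=\mathbb{E}(X_{n,m}(X_{n,m}-1))$. Differentiating \eqref{Qnmt} twice at $t=1$ and eliminating $\sum_{\ell}\lambda_{n,m,\ell}\mu_{n,m-\ell}$ by the mean recurrence yields $\sum_{1\le\ell\le m}\lambda_{n,m,\ell}(V_{n,m}-V_{n,m-\ell})=2(\mu_{n,m}-1)$. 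Since $\sigma_{n,m}^2=V_{n,m}+\mu_{n,m}-\mu_{n,m}^2$, and since the mean recurrence also gives $\sum_{\ell}\lambda_{n,m,\ell}(\mu_{n,m}-\mu_{n,m-\ell})=1$ while $\mu_{n,m}^2-\mu_{n,m-\ell}^2=2\mu_{n,m}(\mu_{n,m}-\mu_{n,m-\ell})-(\mu_{n,m}-\mu_{n,m-\ell})^2$, one obtains the clean recurrence
\[
    \sum_{1\le\ell\le m}\lambda_{n,m,\ell}\bigl(\sigma_{n,m}^2-\sigma_{n,m-\ell}^2\bigr)
    =\sum_{1\le\ell\le m}\lambda_{n,m,\ell}(\mu_{n,m}-\mu_{n,m-\ell})^2-1,
    \qquad \sigma_{n,0}^2=0 .
\]
(For $m=1$ this recovers $\sigma_{n,1}^2=(1-\rho)/\rho^2$ with $\rho=\tfrac1n(1-\tfrac1n)^{n-1}$, a convenient sanity check.) Passing to a normalized variable $\sigma_{n,m}^{*2}$ (shift $n\mapsto n+1$ and rescale by $e_n^2/n^2$, exactly as for $\mu_{n,m}^*$) turns this into $\sum_{\ell}\lambda_{n,m,\ell}^*(\sigma_{n,m}^{*2}-\sigma_{n,m-\ell}^{*2})=R_{n,m}^*$, a recurrence of the form governed by Lemma~\ref{lmm-at} and Lemma~\ref{lmm:rec_variation}.

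\textbf{Step 2 (the right-hand side).} I would substitute the expansion of $\mu_{n,m}^*$ from Theorem~\ref{thm:mu} — to reach the claimed precision, augmented by one further term ($\phi_3/n^2$) from \S\ref{sec:mu-ae} — into $(\mu_{n,m}^*-\mu_{n,m-\ell}^*)^2$, expand the harmonic differences $H_m-H_{m-\ell}$ and the Taylor differences $\phi_j(\alpha)-\phi_j(\alpha-\ell/n)$ as in \eqref{eqn:harmonic_expansion} and \eqref{eqn:phi_expansion}, and then sum term by term against $\lambda_{n,m,\ell}^*$ by means of the moment estimates \eqref{Lmbd} of Corollary~\ref{cor:Ur}. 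The identity $\tfrac1\alpha+\phi_1'(\alpha)=1/S_1(\alpha)$ makes the leading part of each difference equal to $e\ell/S_1(\alpha)$, so one gets an expansion of the shape $R_{n,m}^*=\frac{S_2(\alpha)/S_1(\alpha)^2+(\text{terms in }H_m)}{n^2}+\frac{(\cdots)}{n^3}+\cdots$, together with Iverson-type corrections for small $m$. As for the mean, these estimates are uniform only away from $\alpha=0$; near $\alpha\to0$ one splits the $\ell$-sum at $\cl{\sqrt m}$ and bounds the tail using Stirling's formula \eqref{eqn:StirForm}.

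\textbf{Step 3 (Ansatz, differential equations, constants, error).} Postulate the form dictated by the statement and by small-$m$ data, $\sigma_{n,m}^{*2}\sim H_m^{(2)}+\tfrac1n(\cdots+\psi_1(\alpha))+\tfrac1{n^2}(\cdots+\psi_2(\alpha))+\cdots$, with the coefficients of $H_m$, $H_m^{(2)}$ and the functions $\psi_1,\psi_2,\phi_1,\phi_2$ still to be determined. Plug this into the recurrence, using Lemma~\ref{lmm-phi} and its third-order refinement \eqref{eqn:phi_expansion}, an analogue of \eqref{eqn:harmonic_expansion} for $H_m^{(2)}-H_{m-\ell}^{(2)}$, and Corollary~\ref{cor:Ur}; matching powers of $1/n$ then forces $\psi_1'(z)=\dfrac{S_2(z)}{S_1(z)^3}-\dfrac1{z^2}+\dfrac2z$ (which integrates to \eqref{psi-1}) and the longer first-order equation for $\psi_2'$. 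Requiring the residual to be of strictly smaller order than claimed pins down the constants $2e+1$ and $\tfrac{11e+1}2$, just as $b_1=1$ was fixed for the mean; and the integration constants $\psi_1(0)=0$, $\psi_2(0)=\tfrac7{12}$ are obtained by matching the small-$m$ expansions of $\sigma_{n,m}^{*2}$ computed directly from the recurrence (equivalently, by matching the limiting sum-of-exponentials variance $e^2H_m^{(2)}n^2$ of Theorem~\ref{thm:YO1} and its lower-order corrections), exactly as $\phi_2(0)=\tfrac12$ was obtained in \S\ref{sec:ae-small-m}. Finally, the remainder $\Delta_{n,m}^*:=\sigma_{n,m}^{*2}-(\text{truncated Ansatz})$ satisfies a recurrence of the same type with right-hand side $O(n^{-3}H_m^{(2)})$ for $2\le m\le n$ and controlled initial values at $m\le1$, so Lemma~\ref{lmm-at} (or Lemma~\ref{lmm:rec_variation}) gives $\Delta_{n,m}^*=O(n^{-2}H_m)$, which unwinds to the error term of the theorem.

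\textbf{Main obstacle.} The delicate step is Step~2. Because $\sigma_{n,m}^2=V_{n,m}+\mu_{n,m}-\mu_{n,m}^2$ involves a massive cancellation — the $e^2n^2H_m^2$ contributions of $V_{n,m}$ and of $\mu_{n,m}^2$ cancel, leaving the true $e^2n^2H_m^{(2)}$ leading behavior — the right-hand side $R_{n,m}^*$ must be computed to sufficient relative precision to capture not merely the leading $S_2/S_1^2$-term but also its $H_m$- and $H_m^{(2)}$-weighted subleading corrections and the small-$m$ correction terms, since each explicit constant in the statement ($2e+1$, $\tfrac{11e+1}2$, $\tfrac7{12}$) is sensitive to these. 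Concretely, one must carry the mean asymptotics one order beyond Theorem~\ref{thm:mu}, propagate it carefully through the squaring and the convolution with $\lambda_{n,m,\ell}^*$, and keep the non-uniformities at $\alpha\to0$ and $\alpha\to1$ under control throughout.
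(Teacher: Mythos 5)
Your proposal follows essentially the same route as the paper: you arrive at exactly the recurrence the paper works with, namely $\sum_{\ell}\lambda_{n,m,\ell}\bigl(\sigma_{n,m}^2-\sigma_{n,m-\ell}^2\bigr)=-1+\sum_{\ell}\lambda_{n,m,\ell}(\mu_{n,m}-\mu_{n,m-\ell})^2$ (the paper derives it via the centered moment generating function $R_{n,m}(y)$, you via $P_{n,m}''(1)$ and the square-difference identity --- these are equivalent), and you then run the same Ansatz/differential-equation/error-analysis scheme, obtaining the same equations for $\psi_1'$ and $\psi_2'$ and fixing the constants and the initial values $\psi_1(0)=0$, $\psi_2(0)=\tfrac7{12}$ in the same way (small-$m$ data plus the requirement that the residual vanish to the right order). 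Two remarks. First, the paper normalizes $V_{n,m}^*:=e_n^2(\sigma_{n+1,m}^2+\mu_{n+1,m})/n^2$, i.e.\ adds the mean before rescaling, precisely so that the $-1$ on the right-hand side cancels against the mean's recurrence and the right-hand side reduces to the pure quadratic term \eqref{eqn:Tnm}; keeping the $-1$, as you do, is harmless but clutters the matching slightly. Second --- the one point you should repair --- your final error propagation is one order short: the theorem's remainder $O(n^{-2}H_m)$ for $\mathbb{V}(X_{n,m})/(en)$ corresponds to $O(n^{-3}H_m)$ for the rescaled quantity $\sigma_{n,m}^{*2}$ (equivalently for $V_{n,m}^*$, cf.\ \eqref{Vnm-star}), so the residual of the recurrence must be pushed to $O(n^{-4})$ and Lemma~\ref{lmm:rec_variation} applied to $n^{3}\Delta_{n,m}^*$; a right-hand side of order $n^{-3}$, as you state, only yields $\Delta_{n,m}^*=O(n^{-2}H_m)$, which would swallow the very $n^{-2}$-order terms (the constant $\tfrac{11e+1}{2}$ and the function $\psi_2$) you are trying to certify. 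Your own Step 2 already contains the ingredient needed for the fix (carrying the mean to the $\phi_3/n^2$ order so that $T_{n,m}^*$ is known to $O(n^{-4})$), so this is a mechanical correction rather than a conceptual gap.
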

Similar to the mean, we  work on the sequence $V_{n,m}^* := e_n^2
(\sigma_{n+1,m}^2 + \mu_{n+1,m})/n^2$ and prove that (see
Figure~\ref{fig:var-diff} and Appendix D)
\begin{align} \label{Vnm-star}
    V_{n,m}^* & = H_{m}^{(2)} + \frac{-2 H_{m}
    + \psi_{1}(\alpha) + 2H_{m}^{(2)}}{n}
    + \frac{-\frac{11}{2} H_{m} + \psi_{2}(\alpha)
    + \frac{7}{3} H_{m}^{(2)}}{n^{2}}\\
    & \qquad  + O(n^{-3} H_{m})
    \qquad (2 \le m \le n).\notag
\end{align}
\vspace*{-.5cm}
\begin{figure}[!ht]
\begin{center}
\includegraphics[width=6cm]{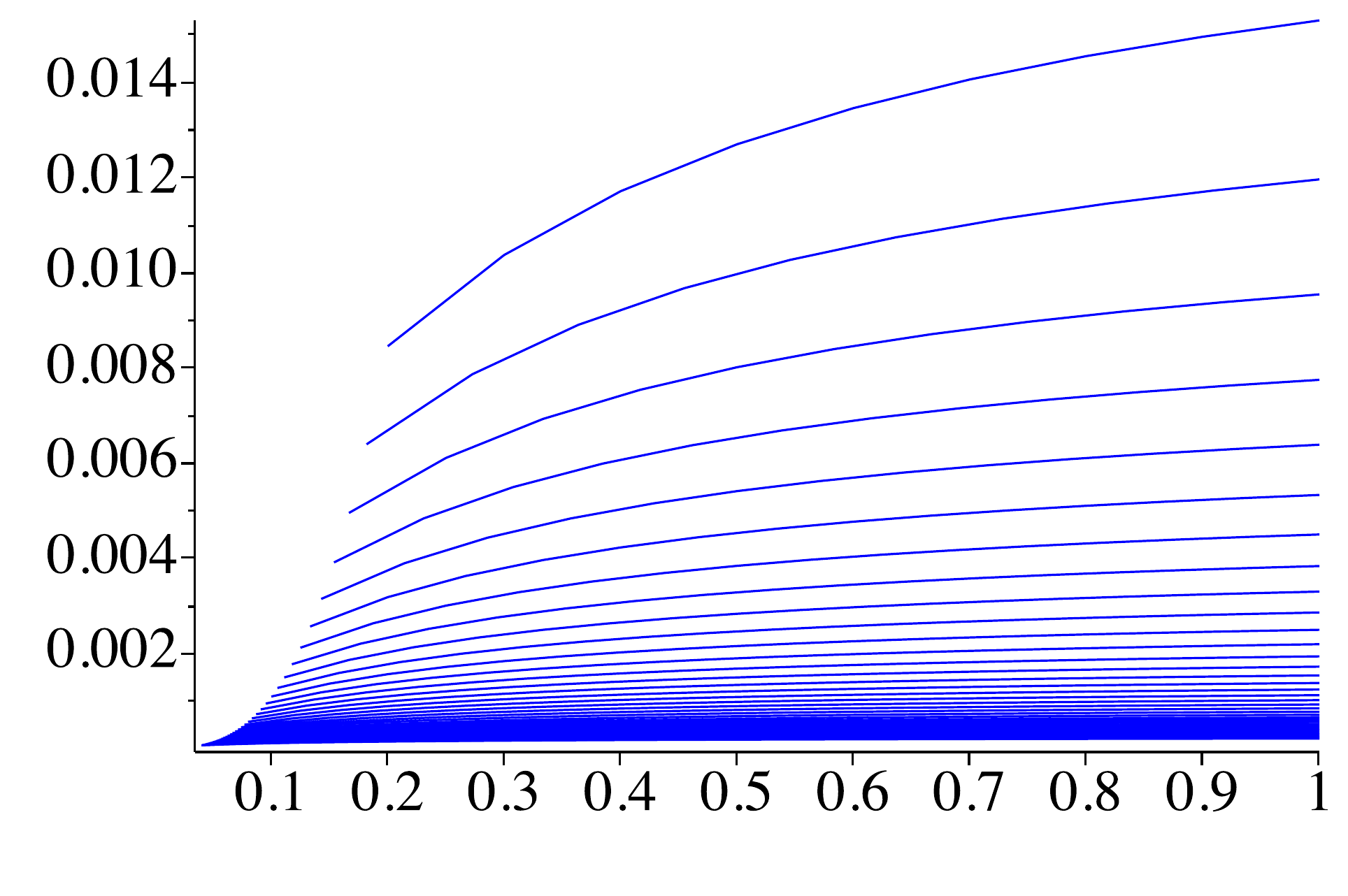}\;\;
\includegraphics[width=6cm]{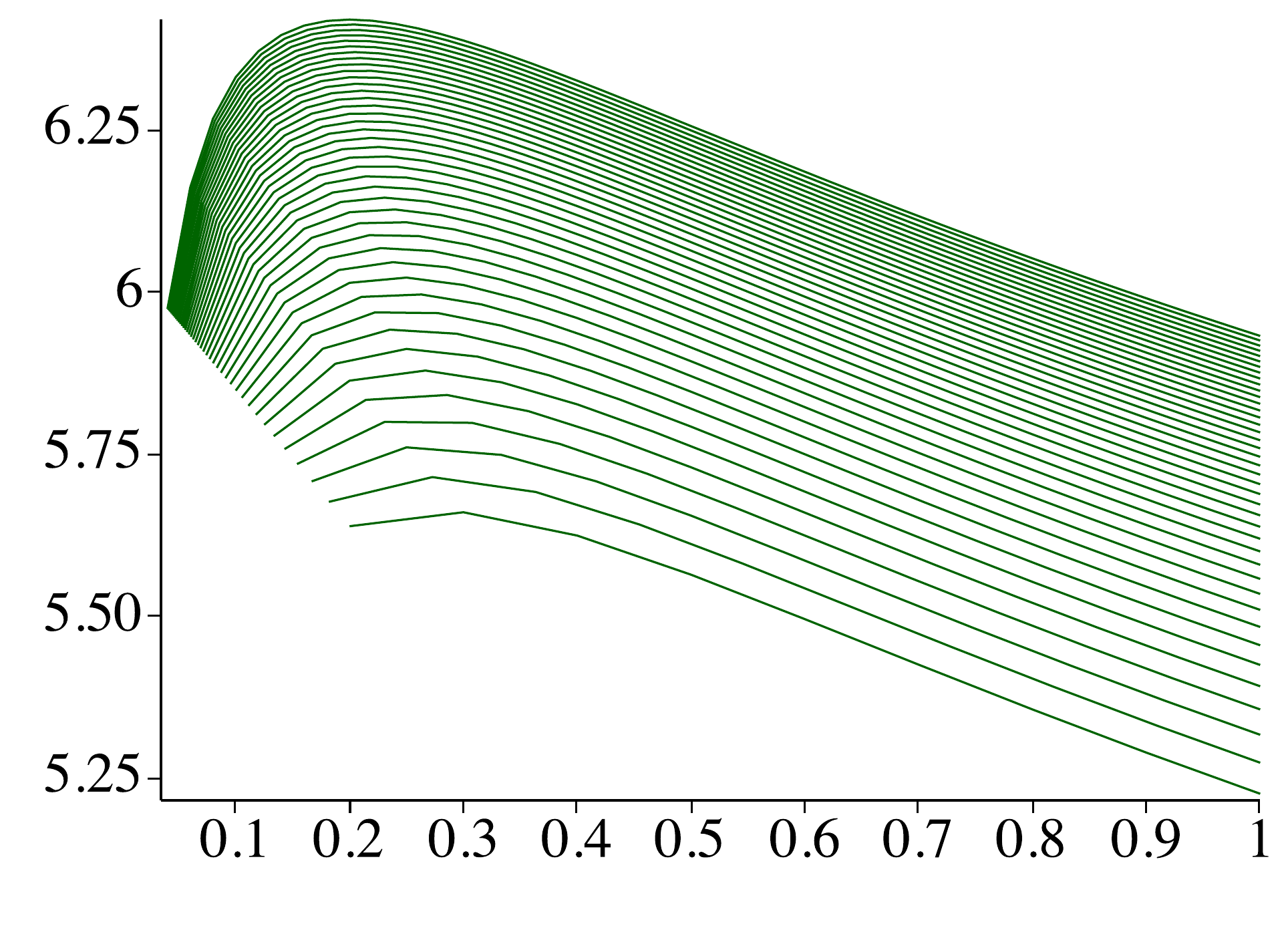}
\end{center}
\vspace*{-.5cm} \caption{\emph{The absolute differences
$|V_{n,m}^*-$ RHS of
\protect{\eqref{Vnm-star}}$|$ for $2\le m\le n$
(normalized to the unit interval) and $n=10,\dots,50$ (left in
top-down order), and the absolute normalized differences
$n^3H_m^{-1}|V_{n,m}^*-$ RHS of \protect{\eqref{Vnm-star}}$|$ for
$n=10,\dots,50$ (right).}} \label{fig:var-diff}
\end{figure}

The variance of $X_n$ is computed by the relation
\[
    \mathbb{V}(X_n) = \sum_{0\le m\le n}
    \pi_{n,m}\left(\sigma_{n,m}^2
    + \mu_{n,m}^2\right)-\left(
    \sum_{0\le m\le n}\pi_{n,m} \mu_{n,m} \right)^2,
\]
where $\pi_{n,m}=\binom{n}{m}\rho^m (1-\rho)^{n-m}$, $\mu_{n,m} :=
\mathbb{E}(X_{n,m})$ and $\sigma_{n,m}^2 := \mathbb{V}(X_{n,m})$.

\begin{thm} The variance of $X_n$ satisfies asymptotically
\label{thm:var2}
\begin{align*}
    \frac{\mathbb{V}(X_n)}{en}
    &= \frac{\pi^2}{6}\,en
    -(2e+1)(\log \rho n+\gamma)+v_1 \\
    &\qquad -\frac{(11e+1)(\log \rho n+\gamma)-v_2}
    {2n}+O\left(n^{-2}\log n\right),
\end{align*}
where ($\bar{\rho} := 1-\rho$)
\[
    v_1 := e\left(\frac{\pi^2}{6}-1\right)
    +e\psi_1(\rho)-\phi_{1}(\rho)
    +2e\bar{\rho}\phi_1'(\rho)+e\bar{\rho}\rho\phi_1'(\rho)^2,
\]
and
\begin{align*}
    v_2 &= e\bar{\rho}^2\rho^2\phi_1''(\rho)^2
    +2e\bar{\rho}^2\rho(1+\rho\phi_1'(\rho))\phi_1'''(\rho)
    +2e\rho\bar{\rho}(1+\phi_1'(\rho))\phi_1''(\rho)\\
    &\qquad +4e\bar{\rho}(1+\rho\phi_2'(\rho))\phi_1'(\rho)
    +2e\bar{\rho}\rho\phi_1'(\rho)^2+4e\bar{\rho}\phi_2'(\rho)
    +e\bar{\rho}\rho\psi_1''(\rho)-\bar{\rho}\rho\phi_1''(\rho)\\
    &\qquad +2e\psi_2(\rho)-2\phi_2(\rho)+2e\rho\psi_1'(\rho)
    -2\rho\phi_1'(\rho)+\phi_1(\rho)+\tfrac5{6}\,e\pi^2-3e-1.
\end{align*}
\end{thm}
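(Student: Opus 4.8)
The plan is to follow the template of the proof of Theorem~\ref{thm:EYn} in Section~\ref{sec:Proof_thm_EYn}, now averaging the conditional second moments of the cost rather than its conditional mean. Conditioning on the initial distance $m$ to the optimum, which is $\mathrm{Binomial}(n,\rho)$-distributed with weights $\pi_{n,m}=\binom{n}{m}\rho^m\bar{\rho}^{\,n-m}$, the cost $X_n$ is distributed as $X_{n,m}$; thus the identity preceding the theorem is precisely the law of total variance
\[
    \mathbb{V}(X_n)
    = \sum_{0\le m\le n}\pi_{n,m}\,\sigma_{n,m}^2
    \;+\;\Bigl(\sum_{0\le m\le n}\pi_{n,m}\,\mu_{n,m}^2
    -\bigl(\sum_{0\le m\le n}\pi_{n,m}\,\mu_{n,m}\bigr)^2\Bigr)
    =: \mathbb{E}_\pi\bigl[\sigma_{n,m}^2\bigr]
    +\mathbb{V}_\pi\bigl[\mu_{n,m}\bigr].
\]
Writing it this way is convenient because the pieces of $\mu_{n,m}$ that are, to the order we need, independent of $m$ --- in particular the $\Theta(n\log n)$ part $en(\log\rho n+\gamma+\phi_1(\rho))$ --- cancel automatically in $\mathbb{V}_\pi[\mu_{n,m}]$, which is therefore only of order $n$; one thus never has to extract an $O(n)$ quantity from the difference of two quantities of order $n^2\log^2 n$ by hand.

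For each of the two pieces I would repeat the mechanism of Section~\ref{sec:Proof_thm_EYn}: put $m=\rho n+x\sqrt{\rho\bar{\rho}n}$, insert the Edgeworth-type local expansion of $\pi_{n,m}$ obtained there (with its polynomials $p_1,p_2,p_3$ in $x$), substitute the asymptotic expansions of $\sigma_{n,m}^2$ from Theorem~\ref{thm:var1} and of $\mu_{n,m}$ from Theorem~\ref{thm:mu}, and Taylor-expand the $\alpha$-dependent ingredients $\phi_1,\phi_2,\psi_1,\psi_2$ and their derivatives about $\alpha=\rho$, together with $H_m=\log m+\gamma+\tfrac{1}{2m}-\tfrac{1}{12m^2}+\cdots$, $H_m^{(2)}=\tfrac{\pi^2}{6}-\tfrac{1}{m}+\tfrac{1}{2m^2}-\cdots$, and $m=\rho n\bigl(1+x\sqrt{\bar{\rho}/(\rho n)}\bigr)$. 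After these substitutions the computation reduces to finite linear combinations of the sums
\[
    \frac1{\sqrt{2\pi\rho\bar{\rho}n}}
    \sum_{x=(m-\rho n)/\sqrt{\rho\bar{\rho}n}} x^r e^{-x^2/2}
    = \frac1{\sqrt{2\pi}}\int_{-\infty}^{\infty} x^r e^{-x^2/2}\dd x
    + O(n^{-L}),
\]
evaluated by the Euler--Maclaurin formula exactly as in the proof of Theorem~\ref{thm:EYn}; the odd moments vanish, only the even Gaussian moments $1,1,3,15,\dots$ survive, the range $|x|\gg n^{1/6}$ contributes negligibly because the binomial tail is there smaller than any power of $n$, and the two exceptional states $m=0,1$ (for which Theorem~\ref{thm:var1} needs correction terms) carry exponentially small weight. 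Collecting the terms of orders $n^2$, $n\log n$, $n$, $\log n$, $1$ then gives the stated expansion: the leading $\tfrac{\pi^2}{6}en$ and the $-(2e+1)(\log\rho n+\gamma)$ terms come from $\mathbb{E}_\pi[\sigma_{n,m}^2]$; the constant $v_1$ combines its $O(n)$ part with the leading term of $\mathbb{V}_\pi[\mu_{n,m}]$, which equals $e^2\rho\bar{\rho}n/S_1(\rho)^2$ after using $\phi_1'(\rho)=1/S_1(\rho)-1/\rho$; and $v_2$ gathers the remaining $O(1)$ contributions of both pieces.

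The only real obstacle is the bookkeeping, of the same flavour as --- but heavier than --- in Theorem~\ref{thm:EYn}. One must carry the three expansions (of $\pi_{n,m}$, of $\sigma_{n,m}^2$, of $\mu_{n,m}$) to mutually compatible order while tracking the parity of every polynomial in $x$, and for $\mathbb{V}_\pi[\mu_{n,m}]$ it is cleanest to expand $\mu_{n,m}=A(n)+B(n)(m-\rho n)+C(n)(m-\rho n)^2+\cdots$ so that the cancellation of the $m$-free part $A(n)$ is manifest; here $B(n)=e/S_1(\rho)+O(n^{-1})$ and $C(n)=-\tfrac{1}{2}\,e S_1'(\rho)/(nS_1(\rho)^2)+O(n^{-2})$, and the binomial central moments $\mathbb{E}_\pi[(m-\rho n)^k]$ for $k\le4$ feed into $\mathbb{V}_\pi[\mu_{n,m}]=B^2\mathbb{V}_\pi[m]+2BC\,\mathbb{E}_\pi[(m-\rho n)^3]+C^2\bigl(\mathbb{E}_\pi[(m-\rho n)^4]-\mathbb{V}_\pi[m]^2\bigr)+\cdots$. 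Assembling the final constants, above all $v_2$ --- which collects $\phi_1,\phi_1',\phi_1'',\phi_1''',\phi_2,\phi_2',\psi_1,\psi_1',\psi_1'',\psi_2$ all at $\rho$ --- is then best left to symbolic computation, just as the analogous step in Section~\ref{sec:Proof_thm_EYn} already is.
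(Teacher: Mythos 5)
Your proposal is correct and follows essentially the same route as the paper, which states the total-variance identity $\mathbb{V}(X_n)=\sum_m\pi_{n,m}(\sigma_{n,m}^2+\mu_{n,m}^2)-(\sum_m\pi_{n,m}\mu_{n,m})^2$ and then leaves the asymptotic evaluation to the Gaussian-approximation-plus-Euler--Maclaurin machinery of Section~\ref{sec:Proof_thm_EYn}, fed with the expansions of Theorems~\ref{thm:mu} and~\ref{thm:var1}. Your regrouping as $\mathbb{E}_\pi[\sigma_{n,m}^2]+\mathbb{V}_\pi[\mu_{n,m}]$ and the explicit expansion of $\mu_{n,m}$ in central moments of $m-\rho n$ are just a cleaner bookkeeping of the same computation.
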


\paragraph{Recurrences for the centered moment generating function.}
To compute the variance, one may start with the second moment and
then consider the difference with the square of the mean; however,
it is computationally more advantageous to study directly the
recurrence satisfied by the variances themselves.

From \eqref{Qnmt}, we have, by substituting $t=e^y$,
\[
    \left(1-\left(1-\Lambda_{n,m}\right)e^y \right)P_{n,m}(e^y)
    = e^y \sum_{1\le \ell \le m}
    \lambda_{n,m,\ell}P_{n,m-\ell}(e^y),
\]
which can be rewritten as
\[
    \left(e^{-y}-1-\Lambda_{n,m} \right)P_{n,m}(e^y)
    = \sum_{1\le \ell \le m}
    \lambda_{n,m,\ell}P_{n,m-\ell}(e^y),
\]
or
\[
    \sum_{1\le \ell \le m}
    \lambda_{n,m,\ell} \bigl(P_{n,m}(e^y)
    - P_{n,m-\ell}(e^y)\bigr)
    = (1-e^{-y}) P_{n,m}(e^y).
\]
This is a simpler recurrence to start as fewer terms are involved
for the moments.

We now consider the moment generating function for the centered
random variables $X_{n,m}-\mu_{n,m}$
\[
    R_{n,m}(y) := P_n(e^y) e^{-\mu_{n,m}y},
\]
which then satisfies the recurrence
\[
    \sum_{1\le \ell \le m}
    \lambda_{n,m,\ell} \bigl(R_{n,m}(y)
    - R_{n,m-\ell}(y) e^{-(\mu_{n,m}-\mu_{n,m-\ell})y}\bigr)
    = (1-e^{-y}) R_{n,m}(y),
\]
for $1\le m\le n$ with $R_{n,0}(y)=1$.

\paragraph{Variance.}
Let $\sigma_{n,m}^2 = \mathbb{V}(X_{n,m})=R_{n,m}''(0)$ be the
variance of $X_{n,m}$. Then $\sigma_{n,m}^2$ satisfies the
recurrence
\begin{equation*}
    \sum_{1\le \ell \le m} \lambda_{n,m,\ell}
    \left(\sigma_{n,m}^2-\sigma_{n,m-\ell}^2\right)
    = -1+ \sum_{1\le \ell \le m}
    \lambda_{n,m,\ell} \,
    \left(\mu_{n,m}-\mu_{n,m-\ell}\right)^2.
\end{equation*}
In terms of $V_{n,m}^*:= e_n^2(\sigma_{n+1,m}^2 + \mu_{n+1,m})/n^2$,
we have $V_{n,0}^*=0$, and for $1 \le m \le n$
\begin{equation}\label{eqn:Vnm_rec}
    \sum_{1\le \ell \le m} \lambda_{n,m,\ell}^*
    \left(V_{n,m}^*-V_{n,m-\ell}^*\right) = T_{n,m}^*,
\end{equation}
where
\begin{equation}\label{eqn:Tnm}
    T_{n,m}^* := \sum_{1\le \ell \le m} \lambda^*_{n,m,\ell}
    \left(\mu_{n,m}^*-\mu_{n,m-\ell}^*\right)^2.
\end{equation}
In particular, this gives
\begin{align*}
    V_{n,1}^* &= 1,\\
    V_{n,2}^* &= \frac{5n^4+8n^3-n^2-4n+1}{(2n^2+2n-1)^2}.
\end{align*}
The expressions become very lengthy as $m$ increases. In Appendix~C
we give asymptotic expansions for $V_{n,m}^*$ for a few small $m$ as
$n \to \infty$. Based on these expansions, a suitable Ansatz for the
asymptotic behavior of $V_{n,m}^*$ can be deduced (assisted again by
computer algebra system), which then can be proven analogous to the
method of proof presented in Section~\ref{sec:Exp}.

\paragraph{Proof of asymptotics with error analysis.} By the same
procedure used for $\mu_{n,m}^*$, we start from computing the
asymptotic expansions for $V_{n,m}^*$ for small $m$. These
expansions suggest the more uniform (for $1\le m\le n$) asymptotic
expansion
\[
    V_{n,m}^* \sim c_{0} H_{m}^{(2)} + \frac{a_{1} H_{m}
    + \psi_{1}(\alpha) + c_{1} H_{m}^{(2)}}{n},
\]
for some constants $c_{0}$, $c_{1}$ and $a_{1}$, and some function
$\psi_{1}(z)$. Such an asymptotic form can be justified by the same
approach we used above for $\mu_{n,m}^*$. More precisely, we now
prove that
\begin{equation}\label{eqn:Vnm_first}
    V_{n,m}^* = H_{m}^{(2)} + \frac{-2 H_{m}
    + \psi_{1}(\alpha) + 2 H_{m}^{(2)}}{n} + O(n^{-2} H_{m}),
\end{equation}
uniformly for $0 \le m \le n$ and $n \ge 1$, where $\psi_{1}(z)$ is
given in \eqref{psi-1}. Our proof start from considering the
difference
\begin{equation*}
    \Delta_{n,m}^* := V_{n,m}^* - c_{0} H_{m}^{(2)}
    - \frac{a_{1} H_{m} + \psi_{1}(\alpha) + c_{1} H_{m}^{(2)}}{n},
\end{equation*}
and specify the involved coefficients and $\psi_{1}(z)$ such that
$\Delta_{n,m}^* = O(n^{-2} H_{m})$. By \eqref{eqn:Vnm_rec},
$\Delta_{n,m}^*$ satisfies, for $1 \le m \le n$, the recurrence
\begin{equation}\label{V-Delta}
    \sum_{1 \le \ell \le m} \lambda_{n,m,\ell}^*
    \left(\Delta_{n,m}^* - \Delta_{n,m-\ell}^*\right)
    = \tilde{E}_{1}(n,m),
\end{equation}
with the initial value is $\Delta_{n,0}^* = -
\frac{\psi_{1}(0)}{n}$, where ($T_{n,m}^*$ being defined in
\eqref{eqn:Tnm})
\begin{align*}
\begin{split}
    \tilde{E}_{1}(n,m) & := T_{n,m}^*
    -  \sum_{1 \le \ell \le m} \lambda_{n,m,\ell}^*
    \biggl\{(c_{0}+\frac{c_1}{n})
    \left( H_{m}^{(2)} - H_{m-\ell}^{(2)}\right)
    +\frac{a_{1}}{n}\left( H_{m} - H_{m-\ell}\right)  \\
    & \hspace*{4cm}  +\frac{\psi_{1}\big(\frac{m}{n}\big) -
    \psi_{1}\big(\frac{m-\ell}{n}\big)}{n}\biggr\}.
\end{split}
\end{align*}

We will derive an asymptotic expansion for $\tilde{E}_{1}(n,m)$. For
that purpose, we use the expansions \eqref{eqn:phi_expansion},
\eqref{eqn:harmonic_expansion} as well as Theorem~\ref{thm:mu} in
Section~\ref{sec:Exp}, and apply the same error analysis used for
$\mu_{n,m}^*$. A careful analysis then leads to
\begin{align}\label{eqn:munm2_expansion}
    \sum_{1 \le \ell \le m} \lambda_{n,m,\ell}^*
    \left(\mu_{n,m}^* - \mu_{n,m-\ell}^*\right)^{2}
    &= \frac{1}{mn} + \frac{1}{n^{2}}
    \left(-\frac1\alpha +\frac{(1+\alpha \phi_1'(\alpha))^2}
    {\alpha^2}S_2(\alpha)\right)\\
    & \qquad + \frac{3}{2mn^{2}}
    + \frac{\dbbracket{m \ge 2}}{2m(m-1)n^{2}}
    - \frac{\dbbracket{m=1}}{n^{2}}+ O(n^{-3}),\notag
\end{align}
and
\begin{align}\label{eqn:harmonic2_expansion}
\begin{split}
    & \sum_{1 \le \ell \le m} \lambda_{n,m,\ell}^* \,
    \left(H_{m}^{(2)}-H_{m-\ell}^{(2)}\right)\\
    &\qquad = \frac{1}{m n} + \frac{1}{n^{2}}
    \left(\frac{S_{1}(\alpha)-\alpha}{\alpha^{2}}\right)
    - \frac{1}{2 m n^{2}} + \frac{\dbbracket{m \ge 2}}
    {2m(m-1)n^{2}} - \frac{\dbbracket{m=1}}{n^{2}}
    + O(n^{-3}),
\end{split}
\end{align}
both holding uniformly for $1 \le m \le n$ as $n \to \infty$.

Collecting the expansions \eqref{eqn:phi_expansion},
\eqref{eqn:harmonic_expansion}, \eqref{eqn:munm2_expansion} and
\eqref{eqn:harmonic2_expansion}, we obtain
\begin{align*}
\begin{split}
    \tilde{E}_{1}(n,m) & = \frac{1-c_{0}}{m n}
    + \frac{1}{n^{2}} \bigg\{-\frac1\alpha
    +\frac{(1+\alpha \phi_1'(\alpha))^2}
    {\alpha^2}S_2(\alpha) - \frac{c_{0}}{\alpha^2}
    \left(S_{1}(\alpha) - \alpha\right)\\
    & \qquad \quad - \left(\frac{a_{1}}{\alpha}+\psi_1'(\alpha)
    \right) S_{1}(\alpha)\bigg\}
    + \frac{1}{mn^2} \left(\frac{3}{2} + \frac{c_{0}}{2}
    - c_{1}\right)\\
    & \qquad - \frac{\dbbracket{m=1} (1-c_{0})}{n^{2}}
    + \frac{\dbbracket{m \ge 2} (1-c_{0})}{2m(m-1)n^{2}}
    +O(n^{-3}),
\end{split}
\end{align*}
uniformly for $1 \le m \le n$.

We can now specify all the undetermined constants and $\psi_1(z)$
such that all terms except the last will vanish and
$\tilde{E}_1(n,m)=O(n^{-3})$. This entails first the choices
$c_{0}=1$ and $c_1=2$.

It remains only the $\frac{1}{n^{2}}$-term. We consider the limit
when $\alpha$ tends to zero using the Taylor expansions
\eqref{eqn:SrTr_expansion}, and deduce that $a_1=-2$. These values
give the equation satisfied by $\psi_1'(z)$
\begin{equation*}
    \psi_1'(z)S_1(z) = -\frac{S_{1}(z)}{z^2}
    +\frac{(1+z \phi_1'(z))^2}{z^2}S_2(z) +\frac{2 S_{1}(z)}{z},
\end{equation*}
which in view of \eqref{phi1-z} leads to the differential equation
\begin{align}
    \psi_{1}'(z) & = \frac{S_{2}(z)}{S_{1}^{3}(z)}
    - \frac{1}{z^{2}} + \frac{2}{z}.
    \label{eqn:psi1diff}
\end{align}

Thus with the choices $c_{0}=1$, $a_{1}=-2$, $c_{1}=2$, and the
function $\psi_{1}'(z)$ by \eqref{eqn:psi1diff}, we get the bound
$\tilde{E}_{1}(n,m) = O(n^{-3})$ uniformly for $1 \le m \le n$.
Accordingly, by \eqref{V-Delta}, the sequences $\Delta_{n,m} :=
n^{2} \Delta_{n,m}^*$ satisfy the recurrence
\begin{equation*}
    \sum_{1 \le \ell \le m} \lambda_{n,m,\ell}^*
    \left(\Delta_{n,m} - \Delta_{n,m-\ell}\right)
    = O(n^{-1}) \qquad (1 \le m \le n),
\end{equation*}
with $\Delta_{n,0} = - \psi_{1}(0) n$. Choose now the initial value
$\psi_{1}(0) = 0$, so that $\Delta_{n,0}=0$ and Lemma~\ref{lmm-at}
can be applied. This implies that $\Delta_{n,m} = O(H_{m})$, and
consequently $\Delta_{n,m}^* = O(n^{-2} H_{m})$.

Also $\psi_{1}(z)$ is indeed given by \eqref{psi-1}. In particular,
the first few terms in the Taylor expansion of $\psi_{1}(z)$ are
given as follows.
\begin{small}
\begin{align*}
    \psi_{1}(z) & = \tfrac{11}{4} z - \tfrac{49}{36} z^{2}
    + \tfrac{2473}{4320} z^{3} + \tfrac{1307}{14400} z^{4}
    - \tfrac{12743687}{18144000} z^{5}
    + \tfrac{194960323}{152409600} z^{6} + \cdots.
\end{align*}
\end{small}
This completes the proof of the asymptotic expansion
\eqref{eqn:Vnm_first} for $V_{n,m}^*$. The more refined
approximation \eqref{Vnm-star} follows the same line of proof but
with more detailed expansions.

\section{Limit laws when $m\to\infty$}\label{sec:LL}

We show in this section that the distribution of $X_{n,m}$, when
properly normalized, tends to a Gumbel (or extreme-value or double
exponential) distribution, as $m\to\infty$, $m\le n$. The proof
consists in showing that the result \eqref{Y-mO1} when $m=O(1)$
extends to all $m\le n$ but requires an additional correction term
$\phi_1$ coming from the linear part of the random variables, which
complicates significantly the proof.

The standard Gumbel distribution $\mathscr{G}(1)$ (with mode zero,
mean $\gamma$) is characterized by the distribution function
$e^{-e^{-x}}$ and the moment generating function $\Gamma(1-s)$,
respectively. Note that if $X\sim \text{Exp}(1)$, then $-\log X
\sim\mathscr{G}(1)$, which was the description used in \cite{GKS99}.

The genesis of the Gumbel distribution is easily seen as follows.
\begin{lmm} Let $\eta_m := \sum_{1\le r\le m} \Exp(r)$, where the
$m$ exponential random variables are independent. Then $\eta_m-\log
m$ converges in distribution to the Gumbel distribution
\label{lmm-gumbel}
\[
    \mathbb{P}\left(\eta_m - \log m \le x\right)
    \to e^{-e^{-x}}\qquad(x>0; m\to\infty).
\]
\end{lmm}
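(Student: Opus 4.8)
The plan is to compute the moment generating function of $\eta_m$ in closed form and then apply Curtiss's convergence theorem, exactly in the spirit of the $m=O(1)$ argument used for Theorem~\ref{thm:YO1}. Since the $m$ summands are independent and $\mathrm{Exp}(r)$ has moment generating function $r/(r-s)=(1-\frac sr)^{-1}$, the Euler product for the Gamma function gives
\[
    \mathbb{E}\bigl(e^{s\eta_m}\bigr)
    = \prod_{1\le r\le m}\frac1{1-\frac sr}
    = \frac{\Gamma(m+1)\,\Gamma(1-s)}{\Gamma(m+1-s)},
\]
valid for $s$ in a fixed neighbourhood of the origin, say $|s|\le 1-\ve$ (so that none of the factors vanishes and $\Gamma(1-s)$ is finite).

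Next I would estimate the ratio $\Gamma(m+1)/\Gamma(m+1-s)$ as $m\to\infty$. Using Stirling's formula \eqref{eqn:StirForm} (or directly the expansion $\log\Gamma(m+1)-\log\Gamma(m+1-s)=s\log m+O(|s|(|s|+1)/m)$), one gets
\[
    \frac{\Gamma(m+1)}{\Gamma(m+1-s)} = m^s\left(1+O\left(\frac1m\right)\right),
\]
uniformly for $|s|\le 1-\ve$. Dividing the moment generating function by $e^{s\log m}=m^s$ therefore yields
\[
    \mathbb{E}\bigl(e^{s(\eta_m-\log m)}\bigr)
    = \Gamma(1-s)\left(1+O\left(\frac1m\right)\right)
    \longrightarrow \Gamma(1-s),
\]
uniformly on $|s|\le 1-\ve$ as $m\to\infty$. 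Since $\Gamma(1-s)$ is precisely the moment generating function of the standard Gumbel law $\Gumbel(1)$ (indeed $\mathbb{E}(X^{-s})=\int_0^\infty x^{-s}e^{-x}\dd x=\Gamma(1-s)$ for $X\sim\mathrm{Exp}(1)$, and $-\log X\sim\Gumbel(1)$ as noted before the statement), Curtiss's convergence theorem \cite[\S 5.2.3]{HMC13} gives $\eta_m-\log m\xrightarrow{(d)}\Gumbel(1)$, which is the assertion.

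There is no serious obstacle here; the only point requiring a little care is the \emph{uniformity} of the Gamma-ratio estimate in a complex neighbourhood of $s=0$, which is what licenses the application of Curtiss's theorem — but this is just the standard expansion $\log\Gamma(z+a)-\log\Gamma(z)=a\log z+O(|a|^2/|z|)$ for bounded $a$. Alternatively, one may avoid moment generating functions altogether and run the same computation with characteristic functions (take $s=it$): the product formula and the asymptotics of the Gamma ratio are unchanged, and Lévy's continuity theorem replaces Curtiss's.
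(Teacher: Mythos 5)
Your proof is correct and follows essentially the same route as the paper: compute the moment generating function of the centered sum as a finite product, identify its limit with $\Gamma(1-s)$ via a product representation of the Gamma function, and conclude by Curtiss's theorem. The only cosmetic difference is that the paper centers at $H_m$ and invokes the Weierstrass product $\Gamma(1+s)=e^{-\gamma s}\prod_{r\ge1}\frac{e^{s/r}}{1+s/r}$ together with $H_m=\log m+\gamma+O(m^{-1})$, whereas you center at $\log m$ directly and estimate the ratio $\Gamma(m+1)/\Gamma(m+1-s)=m^s(1+O(m^{-1}))$ by Stirling's formula.
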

\pf We have
\begin{align*}
    \mathbb{E}\left(e^{(\eta_m-H_m)s}\right)
    =\prod_{1\le r\le m} \frac{e^{-\frac sr}}{1-\frac sr}
    \to \prod_{r\ge 1} \frac{e^{-\frac sr}}{1-\frac sr}
    = e^{-\gamma s}\Gamma(1-s),
\end{align*}
uniformly for $|s|\le 1-\ve$. Here we used the infinite-product
representation of the Gamma function
\[
    \Gamma(1+s) = e^{-\gamma s} \prod_{r\ge1}
    \frac{e^{\frac sr}}{1+\frac sr} \qquad(s\in\mathbb{C}
    \setminus \mathbb{Z}^-).
\]
The lemma then follows from the asymptotic estimate
\[
    H_m = \log m + \gamma + O(m^{-1})\qquad
    (m\to\infty),
\]
and Curtiss's theorem (see \cite[\S 5.2.3]{HMC13}). \qed

Unlike the case when $m=O(1)$, we need to subtract more terms to
have the limit distribution.
\begin{prop} For $1\le m\le n$, we have the uniform asymptotic
approximation \label{prop-Y-all}
\[
    \mathbb{E}\left(e^{\frac{X_{n,m}}{en}\,s
    -(H_m+\phi_1(\frac mn))s}\right)
    = \left(1+O\left(\frac{H_m}{n}\right)\right)
    \prod_{1\le r\le m} \frac{e^{-\frac sr}}{1-\frac sr},
\]
for $|s|\le 1-\ve$, where $\phi_1$ is defined in \eqref{phi1-z}.
\end{prop}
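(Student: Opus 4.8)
The plan is to prove Proposition~\ref{prop-Y-all} by induction on $m$, mirroring the error analysis developed for the mean in Section~\ref{sec:Exp}, but now carried out at the level of the (centered, rescaled) moment generating function rather than just its first derivative. Introduce the normalized generating function
\[
    Q_{n,m}(s) := \mathbb{E}\left(e^{\frac{X_{n,m}}{en}\,s
    -(H_m+\phi_1(\alpha))s}\right)
    = P_{n,m}\!\left(e^{\frac{s}{en}}\right)
    e^{-(H_m+\phi_1(\alpha))s},
\]
and let $\Pi_m(s) := \prod_{1\le r\le m} \frac{e^{-s/r}}{1-s/r}$ be the target product, which satisfies the clean one-step identity $\Pi_m(s) = \frac{e^{-s/m}}{1-s/m}\,\Pi_{m-1}(s)$. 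From the recurrence \eqref{Qnmt} for $P_{n,m}$, substituting $t=e^{s/(en)}$ and dividing through, one obtains a recurrence of the schematic form
\[
    Q_{n,m}(s)\Bigl(1-\beta_{n,m}(s)\Bigr)
    = \sum_{1\le \ell\le m} \frac{\lambda_{n,m,\ell}}{\Lambda_{n,m}}\,
    e^{\frac{s}{en}}\,
    e^{-(H_m-H_{m-\ell})s}\,
    e^{-(\phi_1(\alpha)-\phi_1(\alpha-\ell/n))s}\,
    Q_{n,m-\ell}(s),
\]
where $\beta_{n,m}(s)=\bigl(1-\tfrac1{en\Lambda_{n,m}}\bigr)\bigl(e^{s/(en)}-1\bigr)\cdot\tfrac{e n\Lambda_{n,m}}{\dots}$ collects the geometric-tail factor; the precise bookkeeping is routine. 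The point is that $\Lambda_{n,m}=e^{-1}S_1(\alpha)(1+O(n^{-1}))$ by \eqref{LS}, $e^{s/(en)}-1 = \frac{s}{en}+O(n^{-2})$, and $H_m-H_{m-\ell}+\phi_1(\alpha)-\phi_1(\alpha-\ell/n) = \frac\ell m + \phi_1'(\alpha)\frac\ell n + (\text{higher order})$, so that the combination of the exponent corrections against the weights $\lambda_{n,m,\ell}$ is exactly what the mean analysis (via Lemma~\ref{lmm-phi} and Corollary~\ref{cor:Ur}) was designed to control.

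The key steps, in order, are: (i) establish the base case $m=O(1)$, which is already contained in \eqref{Qnm-m-bdd} — there $Q_{n,m}(s)=\Pi_m(s)(1+o(1))$ uniformly for $|s|\le 1-\ve$, and a slightly sharpened version of that computation gives the $O(H_m/n) = O(1/n)$ error; (ii) set up the inductive hypothesis $Q_{n,m-\ell}(s) = \Pi_{m-\ell}(s)\bigl(1+O(H_{m-\ell}/n)\bigr)$ for all $1\le\ell\le m$, uniformly in $s$; (iii) insert this into the recurrence, Taylor-expand the exponential correction factors $e^{-(H_m-H_{m-\ell})s-(\phi_1(\alpha)-\phi_1(\alpha-\ell/n))s}$ and the geometric factor, and use the asymptotics of $\sum_\ell a_\ell\lambda_{n,m,\ell}$ from Section~\ref{sec:Asymptotic_sum} (with $a_\ell$ built from $H_m-H_{m-\ell}$, powers of $\ell$, etc.) to show that the right-hand side equals $\frac{e^{-s/m}}{1-s/m}\Pi_{m-1}(s) + (\text{error})$; (iv) read off $Q_{n,m}(s) = \Pi_m(s) + (\text{error})$ and track that the accumulated error is $O(H_m/n)$. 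The harmonic-number growth enters because at stage $m$ one divides by $\Lambda_{n,m}\asymp m/(en)$, contributing a factor $\asymp n/m$ that, multiplied against a per-step error $O(n^{-2})$ and summed, telescopes into $O(H_m/n)$ — precisely the mechanism of Lemma~\ref{lmm-at}, which I would invoke (or re-derive in this mgf setting) to close the induction cleanly.

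The main obstacle will be step (iii): controlling the error \emph{uniformly in the complex parameter} $s$ over $|s|\le 1-\ve$, including near the poles $s=1,\dots$ of the limiting product (these lie outside the disk, but $\Pi_m(s)$ can still be large when $s$ is close to $1$). Specifically, the Taylor remainder in $H_m-H_{m-\ell} = \frac\ell m + O(\ell^2/m^2)$ is only uniform for $\ell=o(m)$, so — exactly as in the derivation of \eqref{harmonic-split}–\eqref{large-ell} — the sum over $\ell$ must be split at $\cl{\sqrt m}$, with the small-$\ell$ range handled by the Taylor expansion plus Corollary~\ref{cor:Ur}, and the large-$\ell$ range bounded crudely using $\sum_{\ell>\sqrt m}\lambda_{n,m,\ell}^* = O(\alpha^{\sqrt m+1}/\Gamma(\sqrt m+2))$, which Stirling's formula \eqref{eqn:StirForm} renders $O(n^{-2})$. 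One also has to verify that multiplying the inductive error $O(H_{m-\ell}/n)$ by $|\Pi_{m-\ell}(s)|$ does not destroy uniformity: since $\prod_{r}|e^{-s/r}/(1-s/r)|$ is bounded on $|s|\le 1-\ve$ uniformly in $m$ (the product converges), this is fine, but it needs to be stated. Once the uniform-in-$s$ bookkeeping is in place, Curtiss's theorem together with Lemma~\ref{lmm-gumbel} will convert Proposition~\ref{prop-Y-all} into the Gumbel limit law asserted in Table~\ref{tab:max-lead}; that conversion, however, belongs to the proof of Theorem~\ref{thm:Y-all} rather than to the Proposition itself.
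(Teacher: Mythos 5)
Your proposal follows essentially the same route as the paper: the paper works with the ratio $F_{n,m}(s) = P_{n,m}(e^{s/(en)})e^{-(H_m+\phi_1(\alpha))s}/\prod_{1\le r\le m}\frac{e^{-s/r}}{1-s/r}$, proves by induction on $m$ that $F_{n,m}(s)=1+O(H_m/n)$ uniformly for $|s|\le 1-\ve$, isolates the auxiliary quantity $G_{n,m}(s)$ obtained by replacing the unknown functions by $1$ (your step (iii)), shows that the choice $\phi=\phi_1$ makes $G_{n,m}(s)=1+O((mn)^{-1})$, and lets the per-step error $O((mn)^{-1})$ telescope into $O(H_m/n)$ exactly via the Lemma~\ref{lmm-at}-style mechanism you invoke, with the sum-splitting at $\lceil\sqrt m\rceil$ and the Stirling tail bound appearing essentially verbatim. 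The only cosmetic difference is that you carry the product $\Pi_m(s)$ along rather than dividing it out at the start.
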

Note that $\phi_1(x) = O(x)$ as $x\to0$, and thus
$\phi_1(\frac{m}{n})=o(1)$ when $m=O(1)$. In this case, the
proposition re-proves Theorem~\ref{thm:YO1} (with an explicit error
term).

A combination of Lemma~\ref{lmm-gumbel} and
Proposition~\ref{prop-Y-all} leads to the limit law for $X_{n,m}$ in
the remaining range.
\begin{thm} If $m \to \infty$ with $n$ and $m\le n$, then
\label{thm:Y-all}
\[
    \mathbb{P}\left(
    \frac{X_{n,m}}{en}-\log m-\phi_1(\tfrac mn)
    \le x\right) \to e^{-e^{-x}} \qquad(x>0),
\]
where $\phi_1$ is defined in \eqref{phi1-z}.
\end{thm}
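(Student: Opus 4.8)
The plan is to combine Proposition~\ref{prop-Y-all} with Lemma~\ref{lmm-gumbel} via a bootstrap of moment generating functions, exactly as the introductory remarks of this section suggest. Write $Z_{n,m} := \frac{X_{n,m}}{en} - \log m - \phi_1(\tfrac mn)$, so that
\[
    \mathbb{E}\left(e^{Z_{n,m}s}\right)
    = e^{(H_m - \log m)s}
    \,\mathbb{E}\left(e^{\frac{X_{n,m}}{en}s
    - (H_m + \phi_1(\frac mn))s}\right).
\]
By Proposition~\ref{prop-Y-all}, the second factor equals $\bigl(1 + O(H_m/n)\bigr)\prod_{1\le r\le m} \frac{e^{-s/r}}{1 - s/r}$ uniformly for $|s|\le 1-\ve$, while $H_m - \log m = \gamma + O(m^{-1})$, so $e^{(H_m-\log m)s} \to e^{\gamma s}$ uniformly on the same disk. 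First I would observe that, since $m\to\infty$ forces $n\to\infty$, the error term $O(H_m/n)$ tends to $0$ (recall $H_m \le H_n = O(\log n) = o(n)$), and likewise $O(m^{-1})\to 0$. Hence
\[
    \mathbb{E}\left(e^{Z_{n,m}s}\right)
    \to e^{\gamma s}\prod_{r\ge1}\frac{e^{-s/r}}{1-s/r}
    = e^{\gamma s}\cdot e^{-\gamma s}\,\Gamma(1-s)
    = \Gamma(1-s),
\]
uniformly for $|s|\le 1-\ve$, where the product evaluation is the infinite-product identity for $\Gamma$ recalled in the proof of Lemma~\ref{lmm-gumbel}. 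Since $\Gamma(1-s)$ is precisely the moment generating function of the standard Gumbel distribution $\mathscr{G}(1)$, and this convergence holds in a fixed complex neighbourhood of the origin, Curtiss's convergence theorem (\cite[\S 5.2.3]{HMC13}) yields $Z_{n,m} \xrightarrow{(d)} \mathscr{G}(1)$, which is the assertion $\mathbb{P}(Z_{n,m}\le x) \to e^{-e^{-x}}$.

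The only genuinely delicate point is making sure the passage to the limit is legitimate when both indices move together: all the estimates feeding into the argument — the $O(H_m/n)$ in Proposition~\ref{prop-Y-all}, the expansion $H_m = \log m + \gamma + O(m^{-1})$, and the uniform convergence of the truncated product $\prod_{1\le r\le m}\frac{e^{-s/r}}{1-s/r}$ to $e^{-\gamma s}\Gamma(1-s)$ — must hold uniformly for $|s|\le 1-\ve$, so that one may take $m,n\to\infty$ in any coupled manner with $m\le n$. Since each of these is already available (the first from the stated proposition, the latter two from standard facts used in Lemma~\ref{lmm-gumbel}), the remaining work is just bookkeeping: fix $\ve>0$, collect the three error contributions, and note each is $o(1)$ as $m\to\infty$. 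I do not anticipate any real obstacle here; the heavy lifting has been done in establishing Proposition~\ref{prop-Y-all}, and this theorem is essentially its corollary together with the elementary Gumbel computation of Lemma~\ref{lmm-gumbel}.
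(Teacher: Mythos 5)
Your argument is correct and is essentially identical to the paper's own proof: the paper likewise multiplies the estimate of Proposition~\ref{prop-Y-all} by $e^{\gamma s}(1+O(m^{-1}))$ coming from $H_m=\log m+\gamma+O(m^{-1})$, identifies the limit $\Gamma(1-s)$ via the infinite product used in Lemma~\ref{lmm-gumbel}, and concludes by Curtiss's theorem. No substantive difference.
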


\begin{thm} The number $X_n$ of steps used by the ($1+1$)-EA to reach
the final state $f(\mathbf{x})=n$, when starting from the initial
state $f(\mathbf{x})\sim \mathrm{Binom}(n;1-\rho)$, satisfies
\label{thm:Y}
\[
    \mathbb{P}\left(
    \frac{X_n}{en}-\log \rho n-\phi_1(\rho)
    \le x\right) \to e^{-e^{-x}} \qquad(x>0).
\]
\end{thm}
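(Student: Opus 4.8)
The plan is to carry the full moment generating function through the argument of Theorem~\ref{thm:EYn} in \S\ref{sec:Proof_thm_EYn} and then to condition on the number of zeros in the random initial string. Writing $\pi_{n,m}=\binom{n}{m}\rho^m\bar{\rho}^{\,n-m}$ ($\bar{\rho}:=1-\rho$) for the probability that the initial string has $m$ zeros (equivalently, $f(\mathbf{x})=n-m$), we have
\[
    \mathbb{E}\left(e^{\frac{X_n}{en}\,s}\right)
    = \sum_{0\le m\le n}\pi_{n,m}\,
    \mathbb{E}\left(e^{\frac{X_{n,m}}{en}\,s}\right).
\]
Since $\Gamma(1-s)$ is the moment generating function of the standard Gumbel law $\mathscr{G}(1)$, whose distribution function is $e^{-e^{-x}}$, it suffices by Curtiss's theorem (see \cite[\S5.2.3]{HMC13}) to prove that
\[
    W_n:=\frac{X_n}{en}-\log\rho n-\phi_1(\rho)
\]
satisfies $\mathbb{E}(e^{W_n s})\to\Gamma(1-s)$ uniformly for $|s|\le 1-\ve$.

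For each $m$ with $1\le m\le n$, Proposition~\ref{prop-Y-all} gives, uniformly in $s$,
\[
    \mathbb{E}\left(e^{\frac{X_{n,m}}{en}\,s}\right)
    = e^{(H_m+\phi_1(m/n))s}\left(1+O\left(\tfrac{H_m}{n}\right)\right)
    \prod_{1\le r\le m}\frac{e^{-s/r}}{1-s/r}.
\]
We split the outer sum at the bulk $|m-\rho n|\le n^{2/3}$. There $m\to\infty$, so $\prod_{1\le r\le m}e^{-s/r}/(1-s/r)=e^{-\gamma s}\Gamma(1-s)(1+O(m^{-1}))$ by the infinite-product formula for $\Gamma$ used in the proof of Lemma~\ref{lmm-gumbel}, $H_m=\log m+\gamma+O(m^{-1})$, and $H_m/n=O(\log n/n)$; moreover $m/(\rho n)=1+O(n^{-1/3})$, hence $\log(m/\rho n)=O(n^{-1/3})$ and, $\phi_1$ being smooth on $[0,1]$, $\phi_1(m/n)-\phi_1(\rho)=O(n^{-1/3})$. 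All these errors are $o(1)$ uniformly, so after dividing out the centering factor $e^{(\log\rho n+\phi_1(\rho))s}$ we obtain
\begin{align*}
    e^{-(\log\rho n+\phi_1(\rho))s}\,\mathbb{E}\left(e^{\frac{X_{n,m}}{en}\,s}\right)
    &= e^{\left(\log\frac{m}{\rho n}+\phi_1(m/n)-\phi_1(\rho)\right)s}\,\Gamma(1-s)\,(1+o(1))\\
    &= \Gamma(1-s)\,(1+o(1)),
\end{align*}
uniformly for $|s|\le 1-\ve$ and for $m$ in the bulk.

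For the tail $|m-\rho n|>n^{2/3}$ we use the crude uniform bound $|\mathbb{E}(e^{X_{n,m}s/(en)})|=O\bigl(e^{(1-\ve)(H_m+\phi_1(m/n))}\bigr)=O(n)$, which follows from Proposition~\ref{prop-Y-all} together with $H_m\le H_n=\log n+O(1)$ and $\phi_1=O(1)$, combined with the Chernoff bound $\sum_{|m-\rho n|>n^{2/3}}\pi_{n,m}=O(e^{-cn^{1/3}})$ for some $c>0$; hence the tail, as well as the single term $m=0$ (which equals $\bar{\rho}^{\,n}$), contribute $o(1)$ to $\mathbb{E}(e^{W_n s})$. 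Since $\sum_{|m-\rho n|\le n^{2/3}}\pi_{n,m}=1-o(1)$, the bulk contributes $\Gamma(1-s)(1+o(1))$, and altogether $\mathbb{E}(e^{W_n s})\to\Gamma(1-s)$ uniformly for $|s|\le 1-\ve$. Curtiss's theorem then yields the assertion.

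The main obstacle — and the reason the statement does not follow formally from Theorem~\ref{thm:Y-all} — is that we are mixing the limit law of $X_{n,m}$ over a random $m$ whose fluctuations are of order $\sqrt n$, which forces us to control the convergence \emph{uniformly in $m$}; this is exactly what Proposition~\ref{prop-Y-all} provides, with its explicit $O(H_m/n)$ error. The fluctuations of $m$ leave the limit undisturbed because the induced perturbation of the centering, $\log(m/\rho n)+\phi_1(m/n)-\phi_1(\rho)$, is $O(n^{-1/3})$ throughout the bulk; controlling the right tail $m\to n$, where the individual moment generating functions grow with $n$, is handled exactly as in the proof of Theorem~\ref{thm:EYn}. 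No analytic tool beyond those already developed is required.
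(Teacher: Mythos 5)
Your proposal is correct and follows essentially the same route as the paper: write the moment generating function of $X_n$ as a binomial mixture of the $P_{n,m}$, invoke the uniform estimate of Proposition~\ref{prop-Y-all} (equivalently the expansion \eqref{G-m}) in the central range of $m$, where the perturbation of the centering is $O(n^{-1/3})$, dispose of the tails by concentration of the binomial, and conclude by Curtiss's theorem. The only difference is one of presentation: the paper states the central-range cutoff as $m=\rho n+x\sqrt{\rho\bar\rho n}$ with $x=o(n^{1/6})$ and leaves the tail estimate as ``a standard argument,'' whereas you spell out the bulk/tail split and the crude $O(n)$ bound explicitly.
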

From Figure~\ref{fig:Yn}, we see the fast convergence of the
distribution to the limit law.
\begin{figure}[!ht]
\begin{center}
\includegraphics[width=8cm]{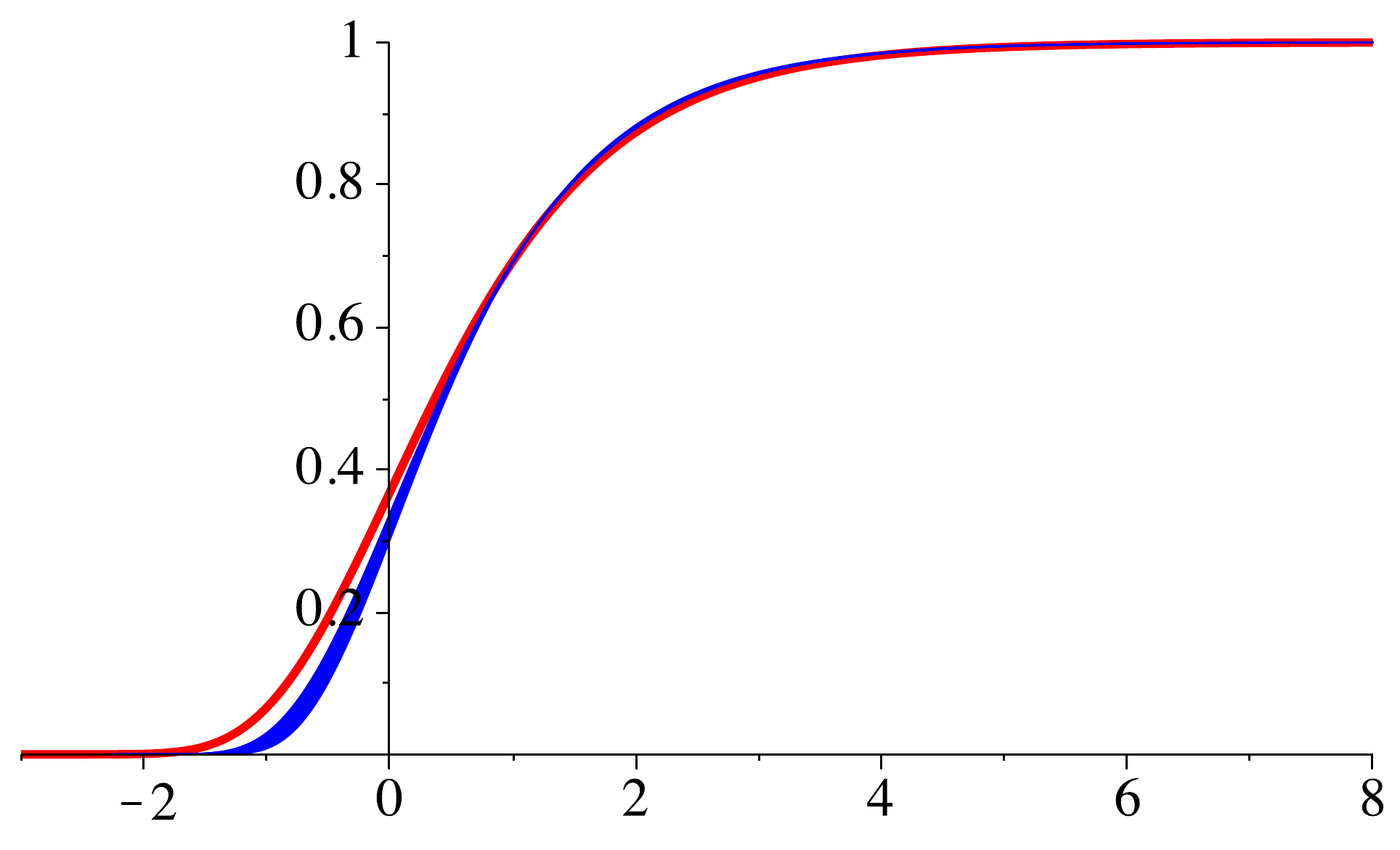}
\end{center}
\vspace*{-.5cm} \caption{\emph{Distributions of $\frac{X_n}{en}-\log
n -\log 2-\phi_1(\frac12) $ for $n=15,\dots,35$, and the limiting
Gumbel curve.}} \label{fig:Yn}
\end{figure}

\paragraph{Outline of proofs.}
We focus on the proof of Proposition~\ref{prop-Y-all} for which we
introduce the following normalized function
\begin{equation*}
    F_{n,m}(s) := \frac{
    \mathbb{E}\left(e^{\frac{X_{n,m}}{en}\,s}\right)
    e^{-\phi\left(\frac{m}{n}\right)s}}
    {\prod\limits_{1 \le r \le m}\frac{1}{1-\frac{s}{r}}}
    = \frac{ P_{n,m}\bigl(e^{\frac{s}{e n}}\bigr)
    e^{-H_{m} s - \phi\left(\frac{m}{n}\right)s}}
    {\prod\limits_{1 \le r \le m}
    \frac{e^{-\frac sr}}{1-\frac{s}{r}}}.
\end{equation*}
Here the probability generating function $P_{n,m}(t) :=
\mathbb{E}\left(t^{X_{n,m}}\right)$ of $X_{n,m}$ satisfies the
recurrence~\eqref{Qnmt} and the function $\phi(x)$ is any
$C^{2}[0,1]$-function with $\phi(0)=0$ (because in the proof we will
require a Taylor expansion of order two). It turns out that if we
choose $\phi(x)=\phi_1(x)$, where $\phi_1$ (see \eqref{phi1-z})
appears as the second-order term in the asymptotic expansion of the
mean (see \eqref{mu-asymp}), then
\begin{equation*}
    F_{n,m}(s) \sim 1,
\end{equation*}
uniformly for all $1 \le m \le n$, $n \to\infty$, and $|s| \le
1-\ve$, where $\ve > 0$ is independent of $m, n$. Indeed, our
induction proof here does not rely on any information of the mean
asymptotics and entails particularly the right choice of $\phi(x)$.
This is why we specify $\phi$ only at a later stage.

\paragraph{The recurrence satisfied by $F_{n,m}$.}
By \eqref{Qnmt}, $F_{n,m}(s)$ satisfies the following recurrence
\begin{align*}
    F_{n,m}(s) = \frac{e^{\frac{s}{e n}}
    \sum\limits_{1 \le \ell \le m} \lambda_{n,m,\ell}
    F_{n,m-\ell}(s)
    e^{-\big(\phi\left(\frac{m}{n}\right)
    -\phi\left(\frac{m-\ell}{n}\right)\big)s}
    \prod\limits_{m-\ell+1 \le r \le m}\left(1-\tfrac{s}{r}\right) }
    {1-\big(1-\Lambda_{n,m}\big) e^{\frac{s}{e n}}} ,
\end{align*}
for $1\le m \le n$, with $F_{n,0}(s)=1$, where $\Lambda_{n,m}
:=\sum_{1 \le \ell \le m} \lambda_{n,m,\ell}$.

\paragraph{An auxiliary sum.} Since we expect $F_{n,m}(s)$
to be close to $1$, we replace all occurrences of $F$ on the
right-hand side by $1$ and consider the following function
\begin{align*}
    G_{n,m}(s) &:= \frac{e^{\frac{s}{e n}}
    \sum\limits_{1 \le \ell \le m}
    \lambda_{n,m,\ell} e^{-\big(\phi\left(\frac{m}{n}\right)
    -\phi\left(\frac{m-\ell}{n}\right)\big)s}
    \prod\limits_{m-\ell+1 \le r \le m}\left(1-\frac{s}{r}\right)}
    {1-\big(1-\Lambda_{n,m}\big) e^{\frac{s}{e n}}} .
\end{align*}
The following lemma is the crucial step in our proof.
\begin{lmm} \label{lem:Gmns_general}
Let $\phi(x)$ be a $C^{2}$-function on the unit interval satisfying
$\phi(0)=0$. Then
\begin{equation}\label{Gnms}
    G_{n,m}(s) = \frac{1-\frac{s}{m}
    \left(1+\alpha\phi'(\alpha)\right)
    \frac{S_{1}(\alpha)}
    {S(\alpha)}
    + O\left(\frac{1}{m n}\right)}{1-\frac{s}{m}\cdot
    \frac{\alpha}{S(\alpha)}
    + O\left(\frac{1}{m n}\right)},
\end{equation}
where the $O$-terms hold uniformly for $1 \le m \le n$, and $|s|\le
1-\ve$.
\end{lmm}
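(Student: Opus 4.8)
The plan is to write $G_{n,m}(s)=N_{n,m}(s)/D_{n,m}(s)$ with numerator $N_{n,m}(s):=e^{s/(en)}\Sigma_{n,m}(s)$, where
\[
    \Sigma_{n,m}(s):=\sum_{1\le\ell\le m}\lambda_{n,m,\ell}\,W_{\ell}(s),\qquad
    W_{\ell}(s):=e^{-\left(\phi(\alpha)-\phi\left(\alpha-\frac{\ell}{n}\right)\right)s}\prod_{m-\ell+1\le r\le m}\left(1-\tfrac{s}{r}\right),
\]
and denominator $D_{n,m}(s):=1-(1-\Lambda_{n,m})e^{s/(en)}$, to pull out of each the common factor $\Lambda_{n,m}e^{s/(en)}$ (which then cancels in the quotient), and to expand the two leftover factors. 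Throughout $|s|\le1-\ve$, $\alpha=m/n$, and I use freely $\Lambda_{n,m}^{(r)}=e^{-1}S_r(\alpha)(1+O(n^{-1}))$ from \eqref{LS} and $S_r(\alpha)\asymp\alpha$ from \eqref{L-ub}; here $S(\alpha)$ is the function $S_0(\alpha)$, so $\Lambda_{n,m}\asymp\alpha$ and $\alpha/S(\alpha),\,S_1(\alpha)/S(\alpha)=O(1)$.

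For the denominator, factor
\[
    D_{n,m}(s)=\Lambda_{n,m}e^{\frac{s}{en}}\left(1-\frac{1-e^{-\frac{s}{en}}}{\Lambda_{n,m}}\right),
\]
and combine $1-e^{-s/(en)}=\frac{s}{en}+O(n^{-2})$ with $\Lambda_{n,m}=e^{-1}S(\alpha)(1+O(n^{-1}))$, $S(\alpha)\asymp\alpha$, to get
\[
    \frac{1-e^{-\frac{s}{en}}}{\Lambda_{n,m}}=\frac{s}{nS(\alpha)}\bigl(1+O(n^{-1})\bigr)=\frac{s}{m}\cdot\frac{\alpha}{S(\alpha)}+O\Bigl(\tfrac1{mn}\Bigr),
\]
which is the denominator of \eqref{Gnms}. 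For the numerator the leftover factor is $\Sigma_{n,m}(s)/\Lambda_{n,m}$. Split the $\ell$-sum at $\lceil\sqrt m\rceil$. For $\ell>\sqrt m$ the crude bound $|W_\ell(s)|\le\exp(|s|H_m+|s|\sup_{[0,1]}|\phi'|)=O(m^{|s|})$ together with the super-exponential smallness of $\sum_{\ell>\sqrt m}\lambda_{n,m,\ell}$ (the Stirling estimate of \eqref{large-ell}) makes that part negligible. For $\ell\le\sqrt m$, take logarithms,
\[
    \log\prod_{m-\ell+1\le r\le m}\left(1-\tfrac sr\right)=-s\bigl(H_m-H_{m-\ell}\bigr)+O\bigl(s^2(H_m^{(2)}-H_{m-\ell}^{(2)})\bigr),
\]
and insert the expansions $H_m-H_{m-\ell}=\frac\ell m+O(\ell^2/m^2)$ and, since $\phi\in C^2[0,1]$, $\phi(\alpha)-\phi(\alpha-\frac\ell n)=\phi'(\alpha)\frac\ell n+O(\ell^2/n^2)$; using $\frac\ell n=\alpha\frac\ell m$ this gives $W_\ell(s)=1-\frac{s\ell}m(1+\alpha\phi'(\alpha))+O(\ell^2/m^2)$. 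Summing against $\lambda_{n,m,\ell}$ and applying \eqref{LS} to $\Lambda_{n,m}^{(0)},\Lambda_{n,m}^{(1)},\Lambda_{n,m}^{(2)}$,
\[
    \Sigma_{n,m}(s)=\Lambda_{n,m}-\frac sm\bigl(1+\alpha\phi'(\alpha)\bigr)\Lambda_{n,m}^{(1)}+O\Bigl(\tfrac{\Lambda_{n,m}^{(2)}}{m^2}\Bigr),
\]
and dividing by $\Lambda_{n,m}\asymp\alpha$, with $\Lambda_{n,m}^{(1)}/\Lambda_{n,m}=S_1(\alpha)/S(\alpha)+O(n^{-1})$, yields the numerator of \eqref{Gnms}. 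Forming $G_{n,m}=N_{n,m}/D_{n,m}$, the common factor $\Lambda_{n,m}e^{s/(en)}$ cancels and \eqref{Gnms} follows.

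The main obstacle is the uniform control of the error in the numerator. Because the $\ell$-sum extends over all of $1\le\ell\le m$, one must carry the second-order Taylor remainders of both the finite product $\prod(1-s/r)$ and of $\phi$, split off the range $\ell>\sqrt m$ and bound it by a super-exponentially small quantity (using the explicit shape of $\lambda_{n,m,\ell}$ in \eqref{lmbd-star}), and then show — via $S_r(\alpha)\asymp\alpha$ and $\Lambda_{n,m}^{(r)}\sim e^{-1}S_r(\alpha)$ of Section~\ref{sec:Asymptotic_sum} — that, once the common factor $\Lambda_{n,m}e^{s/(en)}$ is removed, what survives has the form displayed in \eqref{Gnms} with the asserted remainders. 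By contrast the denominator, the cancellation of the common factor, and the algebra of the explicit terms are routine.
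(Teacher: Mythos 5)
Your overall architecture is the same as the paper's: the same factorization of $\Lambda_{n,m}e^{\frac{s}{en}}$ out of numerator and denominator, the same split of the $\ell$-sum at $\lceil\sqrt{m}\rceil$, and the same use of \eqref{LS} and \eqref{L-ub}. The denominator and the tail $\ell>\sqrt{m}$ are handled correctly. However, there is a genuine quantitative gap in the numerator. You bound the Taylor remainder of $W_\ell(s)$ uniformly by $O(\ell^2/m^2)$ for \emph{all} $1\le\ell\le\lceil\sqrt m\rceil$, which gives an error $O\bigl(\Lambda_{n,m}^{(2)}/m^2\bigr)=O(\alpha/m^2)$ in $\Sigma_{n,m}(s)$. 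After dividing by $\Lambda_{n,m}\asymp\alpha$, this leaves an error $O(1/m^2)$ in the numerator of \eqref{Gnms} — not the asserted $O\bigl(\frac{1}{mn}\bigr)$, and for $m=o(n)$ the two are not comparable. This precision is not cosmetic: the induction proving Lemma~\ref{lemma:Fnms} telescopes $\sum_{k\le m}\frac{1}{kn}=\frac{H_m}{n}$, and an error of order $1/m^2$ at each step would only give $|F_{n,m}-1|=O(1)$, destroying the Gumbel limit law.

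The repair, which is exactly what the paper's Iverson bracket $\dbbracket{\ell\ge2}$ encodes, is to observe that for $\ell=1$ the finite product equals $1-\frac{s}{m}$ \emph{exactly} and $H_m-H_{m-1}=\frac1m$ exactly, so the only error at $\ell=1$ comes from the $\phi$-factor and cross terms and is $O\bigl(\frac{1}{mn}\bigr)$, contributing $O\bigl(\frac{\alpha}{mn}\bigr)=O(n^{-2})$ to $\Sigma_{n,m}(s)$. The quadratic remainders then enter only for $\ell\ge2$, where
\begin{equation*}
    m^{-2}\sum_{2\le\ell\le m}\ell^{2}\lambda_{n,m,\ell}^*=O\bigl(m^{-2}\alpha^{2}\bigr)=O(n^{-2})
\end{equation*}
(the sum over $\ell\ge2$ is $O(\alpha^{2})$, not $O(\alpha)$, as in \eqref{small-ell}). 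The total error in $\Sigma_{n,m}(s)$ is then $O(n^{-2})$, which upon division by $\Lambda_{n,m}\asymp\frac{m}{n}$ yields the required $O\bigl(\frac{1}{mn}\bigr)$. A secondary, minor point: your Taylor expansions of $H_m-H_{m-\ell}$ and of the product require $m$ bounded away from small values (the paper disposes of $m=O(1)$ separately by noting both numerator and denominator equal $1-\frac{s}{m}+O(n^{-1})$ there); you should add that case or verify your estimates degrade gracefully for $m\le 3$.
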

\pf The proof consists in a detailed inspection of all factors,
using estimates \eqref{LS} and \eqref{L-ub} we derived earlier for
$\Lambda_{n,m}^{(r)}$. We consider first the case when $m=O(1)$. In
this case, $S(\alpha), S_1(\alpha)=\alpha+O(\alpha^2)$ and the
numerator and the denominator of \eqref{Gnms} both have the form
\[
    1-\frac sm +O(n^{-1}),
\]
which can be readily checked by using the estimates \eqref{mO1} and
\[
    \Lambda_{n,m} = e^{-1}\alpha +O(\alpha^2).
\]
From now on, we assume $m\ge m_0$, where $m_0$ is sufficiently
large, say $m_0\ge 10$. Throughout the proof, all $O$-terms hold
uniformly for $|s|\le 1-\ve$ and $m_0 \le m \le n$ and $n$ large
enough.

We begin with the denominator of $G_{n,m}(s)$, which satisfies
\begin{align*}
    1-(1-\Lambda_{n,m})e^{\frac{s}{e n}}
    & = \Lambda_{n,m} - \frac{s}{en}
    + \Lambda_{n,m} \frac{s}{en} + O\left(n^{-2}\right)\\
    &= \Lambda_{n,m} \left(1+\frac{s}{en}\right)
    \left(1-\frac{s}{en \Lambda_{n,m}} +
    O\left((mn)^{-1}\right)\right),
\end{align*}
where we used the estimate $\Lambda_{n,m} = \Omega(\alpha)$; see
\eqref{L-ub}. By \eqref{LS} and \eqref{L-ub}, the second-order term
on the right-hand side satisfies
\begin{align*}
    \frac{s}{en \Lambda_{n,m}}
    = \frac{s}{nS(\alpha) (1+O(n^{-1}))}
    = \frac{s}{m} \cdot \frac{\frac{m}{n}}{S(\frac{m}{n})}
    + O\left((mn)^{-1}\right).
\end{align*}
Thus we obtain
\begin{equation}\label{denom}
    1-(1-\Lambda_{n,m}) e^{\frac{s}{e n}} =
    \Lambda_{n,m} \left(1+\frac{s}{en}\right)
    \left(1 - \frac{s}{m} \cdot \frac{\alpha}{S(\alpha)}
    + O\left((mn)^{-1}\right)\right).
\end{equation}

Now we turn to the numerator of $G_{n,m}(s)$ and look first at the
exponential term
\begin{align*}
    e^{-\big(\phi\left(\frac{m}{n}\right)
    -\phi\left(\frac{m-\ell}{n}\right)\big)s}
    & = e^{-\frac{\ell}{n} \phi'(\alpha) s
    + O\left(\frac{\ell^{2}}{n^{2}}\right)}\\
    & = \left(1-\frac{\ell}{n} \,\phi'(\alpha) s\right)
    \left(1+O\left(\frac{\ell^{2}}{n^{2}}\right)\right),
\end{align*}
uniformly for $1\le \ell\le m$, where we used the twice continuous
differentiability of $\phi$.

Consider now the finite product $\prod_{m-\ell+1 \le r \le
m}\big(1-\frac{s}{r}\big)$. Obviously, for $|s|\le1$, we have the
uniform bound
\begin{align*}
    \prod_{m-\ell+1 \le r \le m}
    \left|1-\frac{s}{r}\right|
    \le \prod_{m-\ell+1 \le r \le m}\left(1+\frac1r\right)
    \le e^{H_m} =O(m).
\end{align*}
On the other hand, we also have the finer estimates
\begin{align*}
    \prod_{m-\ell+1 \le r \le m}
    \left(1-\frac{s}{r}\right)
    &= e^{-(H_{m} - H_{m-\ell})s}
    \left(1+O\left(\frac{\ell^{2}}{m^{2}}\right)\right)\\
    &= e^{-\frac \ell m \,s +O\left(\frac{\ell^{2}}{m^{2}}\right)}
    \left(1+O\left(\frac{\ell^{2}}{m^{2}}\right)\right)\\
    &= \left(1-\frac{\ell}{m}\,s\right)
    \left(1+O\left(\frac{\ell^{2}}{m^{2}}\right)\right),
\end{align*}
uniformly for $1\le \ell = o(m)$.

Combining these two estimates, we obtain the following approximation
\begin{equation*}
    \prod_{m-\ell+1 \le r \le m}\left(1-\frac{s}{r}\right)
    = \left(1 - \frac{\ell}{m}\, s\right)
    \left(1 + \dbbracket{\ell \ge 2}
    O\left(\frac{\ell^{2}}{m^{2}}\right)
    + \dbbracket{\ell > \textstyle{\lceil\sqrt{m} \rceil}}
    O(m)\right),
\end{equation*}
which holds uniformly for $1 \le \ell \le m$. Thus the numerator, up
to the factor $e^{\frac{s}{e n}}$, satisfies
\begin{align*}
    & \sum\limits_{1 \le \ell \le m}
    \lambda_{n,m,\ell} e^{-\big(\phi\left(\frac{m}{n}\right)
    -\phi\left(\frac{m-\ell}{n}\right)\big)s}
    \prod\limits_{m-\ell+1 \le r \le m}\left(1-\frac{s}{r}\right) \\
    & \qquad = \sum\limits_{1 \le \ell \le m}
    \lambda_{n,m,\ell} - \frac{s}{m}\left(1
    +\alpha\phi'(\alpha) \right) \sum\limits_{1 \le \ell \le m}
    \ell \lambda_{n,m,\ell}  \\
    & \quad\qquad \mbox{} + O\left(\frac{1}{m^{2}}
    \sum\limits_{2 \le \ell \le m} \ell^{2}
    \lambda_{n,m,\ell}+ \frac{1}{m n}
    \sum\limits_{1 \le \ell \le m} \ell^{2}
    \lambda_{n,m,\ell}
    + m \sum\limits_{\lceil\sqrt{m}\rceil+1 \le \ell \le m}
    \lambda_{n,m,\ell}\right).
\end{align*}
Each of the sums can be readily estimated as in \eqref{small-ell}
and \eqref{large-ell}, and we have
\begin{align*}
    m^{-2} \sum\limits_{2 \le \ell \le m}
    \ell^{2} \lambda_{n,m,\ell}
    =O\left(n^{-2}\right).
\end{align*}
Similarly,
\begin{align*}
    (m n)^{-1} \sum\limits_{1 \le \ell \le m} \ell^{2}
    \lambda_{n,m,\ell}
    =O\left((mn)^{-1} \alpha\right) = O\left(n^{-2}\right).
\end{align*}
Finally, for $m\ge 1$,
\begin{align*}
    m \sum\limits_{\lceil\sqrt{m}\rceil+1 \le \ell \le m}
    \lambda_{n,m,\ell} &= O\left(\frac{m \alpha^{\sqrt{m}+1}}
    {\Gamma(\sqrt{m}+2)} \right)\\
    &= O\left(m^{7/4}n^{-1} e^{-\sqrt{m}(\log n-\frac12\log m -1)}
    \right)=O(n^{-2})
\end{align*}

Collecting these estimates, we get
\begin{align}
    & \sum\limits_{1 \le \ell \le m}
    \lambda_{n,m,\ell} e^{-\big(\phi\left(\frac{m}{n}\right)
    -\phi\left(\frac{m-\ell}{n}\right)\big)s}
    \prod\limits_{m-\ell+1 \le r \le m}
    \left(1-\frac{s}{r}\right)\nonumber \\
    & \qquad = \Lambda_{n,m} - \frac{s}{m}\left(1+\alpha
    \phi'(\alpha) \right) \Lambda_{n,m}^{(1)}
    + O\left(n^{-2}\right)\nonumber  \\
    & \qquad = \Lambda_{n,m} \left( 1 - \frac{s}{m}
    \left(1+\alpha\phi'(\alpha) \right)
    \frac{\Lambda_{n,m}^{(1)}}{\Lambda_{n,m}}
    + O\left((m n)^{-1}\right) \right)\nonumber \\
    &\qquad = \Lambda_{n,m} \left( 1 - \frac{s}{m}
    \left(1+\alpha\phi'(\alpha) \right)
    \frac{S_1(\alpha)}{S(\alpha)}
    + O\left((m n)^{-1}\right) \right),\label{numer}
\end{align}
by applying \eqref{LS}.

By \eqref{denom}, \eqref{numer} and the simple estimate
\[
    e^{\frac{s}{en}}= \left(1+\tfrac s{en}\right)
    \left(1+O(n^{-2})\right),
\]
we conclude \eqref{Gnms}. \qed

\begin{cor} \label{lem:Gmns_special}
Let $\phi(x) = \phi_{1}(x) = \int_{0}^{x} \big(\frac{1}{S_{1}(t)} -
\frac{1}{t}\big) \dd t$. Then
\begin{equation*}
    G_{n,m}(s) = 1 + O\big((m n)^{-1}\big),
\end{equation*}
where the $O$-term holds uniformly for $1 \le m \le n$, $n$ large
enough and $|s|\le 1-\ve$.
\end{cor}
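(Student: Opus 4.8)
The plan is to read the statement off Lemma~\ref{lem:Gmns_general} by specializing $\phi$ to $\phi_{1}$ and exploiting an algebraic cancellation. First I would verify the hypotheses: $\phi_{1}$, defined in \eqref{phi1-z}, is analytic in a neighbourhood of $[0,1]$ with $\phi_{1}(0)=0$, hence in particular a $C^{2}$-function on the unit interval, so Lemma~\ref{lem:Gmns_general} applies and gives the representation \eqref{Gnms} for $G_{n,m}(s)$, uniformly for $1\le m\le n$ and $|s|\le 1-\ve$. No separate treatment of the regime $m=O(1)$ is needed here, since \eqref{Gnms} is already uniform in $m$.

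The key observation is that the two factors in the numerator of \eqref{Gnms} collapse. Using the defining relation $\phi_{1}'(z)=\frac{1}{S_{1}(z)}-\frac{1}{z}$ from \eqref{phi1-z}, I would compute
\[
    1+\alpha\phi_{1}'(\alpha)=\frac{\alpha}{S_{1}(\alpha)},
\]
so that the coefficient $(1+\alpha\phi_{1}'(\alpha))\frac{S_{1}(\alpha)}{S(\alpha)}$ appearing in the numerator of \eqref{Gnms} simplifies to $\frac{\alpha}{S(\alpha)}$ (recall that $S=S_{0}$, since $\Lambda_{n,m}\sim e^{-1}S_{0}(\alpha)$ by \eqref{LS} with $r=0$). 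Thus the numerator and the denominator of \eqref{Gnms} have the \emph{same} principal part
\[
    N:=1-\frac{s}{m}\cdot\frac{\alpha}{S(\alpha)},
\]
and differ only through their respective $O((mn)^{-1})$ errors; it then remains to conclude that the quotient of $N+O((mn)^{-1})$ over $N+O((mn)^{-1})$ equals $1+O((mn)^{-1})$.

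The only step needing a little care — and the main, modest, obstacle — is a uniform lower bound $|N|\ge c>0$, without which dividing by $N$ could inflate the error. I would obtain it from the series $S_{0}(z)=\sum_{\ell\ge1}\frac{z^{\ell}}{\ell!}\sum_{0\le j<\ell}\frac{(1-z)^{j}}{j!}$: its $\ell=1$ term equals $z$ and all further terms are nonnegative on $[0,1]$, whence $S(\alpha)=S_{0}(\alpha)\ge\alpha$ and $0\le\frac{\alpha}{S(\alpha)}\le 1$ on $(0,1]$. Combined with $\frac{|s|}{m}\le|s|\le1-\ve$ this yields $\ve\le|N|\le 2$ uniformly for $1\le m\le n$ and $|s|\le1-\ve$, so that $|N|^{-1}=O(1)$; dividing numerator and denominator of \eqref{Gnms} by $N$ and expanding the resulting quotient then gives $G_{n,m}(s)=1+O((mn)^{-1})$, uniformly for $1\le m\le n$, $n$ large and $|s|\le 1-\ve$, as claimed.
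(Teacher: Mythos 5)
Your proof is correct and follows essentially the same route as the paper: choosing $\phi=\phi_1$ makes the two middle terms in \eqref{Gnms} coincide (via $1+\alpha\phi_1'(\alpha)=\alpha/S_1(\alpha)$), after which the quotient is $1+O((mn)^{-1})$. Your explicit lower bound $|N|\ge\ve$, obtained from $S_0(\alpha)\ge\alpha$ and $|s|\le 1-\ve$, merely fills in a detail the paper leaves implicit.
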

\pf To obtain the error term $O((mn)^{-1})$, we choose $\phi$ in a
way that the two middle terms in the fraction of \eqref{Gnms} are
identical, which means
\[
    \frac{x}{S(x)} = (1+x\phi'(x))\frac{S_{1}(x)}{S(x)}.
\]
Observe that $S(x)>0$ for $x>0$. This, together with
$\phi_{1}(0)=0$, implies $\phi=\phi_1$, which is not only a
$C^2$-function but also analytic in the unit circle. \qed

\paragraph{Proof of Proposition~\ref{prop-Y-all}.}
We now prove Proposition~\ref{prop-Y-all} by induction.
\begin{lmm} \label{lemma:Fnms}
Let $\phi=\phi_{1}$. Then
\begin{equation*}
    F_{n,m}(s) = 1 + O\left(n^{-1}H_{m}\right),
\end{equation*}
uniformly for $0 \le m \le n$, $n$ large enough and $|s|\le \kappa$,
$\kappa\in(0,1)$.
\end{lmm}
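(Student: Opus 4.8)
The plan is to prove the statement by induction on $m$, reading the recurrence for $F_{n,m}$ displayed just above as a perturbed fixed‑point equation. Write $\delta_{n,m}(s):=F_{n,m}(s)-1$ and, for $1\le\ell\le m$,
\[
    w_\ell := \frac{e^{\frac{s}{en}}\,\lambda_{n,m,\ell}\,
    e^{-\left(\phi_1\left(\frac mn\right)-\phi_1\left(\frac{m-\ell}n\right)\right)s}
    \prod_{m-\ell+1\le r\le m}\left(1-\frac sr\right)}
    {1-(1-\Lambda_{n,m})e^{\frac{s}{en}}}
\]
(these of course depend on $n,m,s$ too). Since $G_{n,m}$ was defined by replacing every $F$ on the right by $1$, one has $\sum_{1\le\ell\le m}w_\ell=G_{n,m}(s)$, so the recurrence for $F_{n,m}$ is equivalent to
\[
    \delta_{n,m}(s)=\sum_{1\le\ell\le m}w_\ell\,\delta_{n,m-\ell}(s)+\bigl(G_{n,m}(s)-1\bigr)\qquad(1\le m\le n),
\]
with $\delta_{n,0}(s)=0$. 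By Corollary~\ref{lem:Gmns_special}, $G_{n,m}(s)-1=O\bigl((mn)^{-1}\bigr)$ uniformly for $1\le m\le n$, $n$ large, $|s|\le\kappa$. Hence everything is reduced to controlling the total weight $\sum_\ell|w_\ell|$ and then iterating.

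The key step is to show
\[
    \sum_{1\le\ell\le m}|w_\ell|\le 1+\frac Kn
\]
for an absolute constant $K=K(\kappa)$, uniformly for $1\le m\le n$ and $|s|\le\kappa$ (with $n$ large). This is the same factor‑by‑factor inspection used in the proof of Lemma~\ref{lem:Gmns_general}, but now taking moduli throughout and splitting the sum at $\ell=\cl{\sqrt{m}}$. The term $\ell=1$ carries almost all the weight: using \eqref{LS}, \eqref{L-ub} and the denominator expansion \eqref{denom}, together with $1-\alpha/S_0(\alpha)\asymp\alpha$ (from \eqref{eqn:SrTr_expansion}), one checks that $e^{s/(en)}e^{-(\phi_1(\alpha)-\phi_1(\alpha-1/n))s}/(1+\tfrac s{en})$ and $(1-\tfrac sm)/(1-\tfrac sm\tfrac{\alpha}{S_0(\alpha)})$ both have modulus $1+O(1/n)$ (here $\phi_1\in C^1[0,1]$ enters), so $|w_1|\le\frac{\lambda_{n,m,1}}{\Lambda_{n,m}}\bigl(1+O(1/n)\bigr)$. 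For $\ell\ge2$ the crude bounds $\bigl|e^{-(\ldots)s}\bigr|=O(1)$ and $\bigl|\prod_{m-\ell+1\le r\le m}(1-s/r)\bigr|=1+O(\ell/m)$ for $2\le\ell\le\cl{\sqrt m}$ (and $=O(m^\kappa)$ otherwise, the large‑$\ell$ range being negligible by \eqref{large-ell} and Stirling's formula \eqref{eqn:StirForm}), combined with the fact that $\sum_{\ell\ge2}\ell\,\lambda_{n,m,\ell}=O\bigl(\sum_{\ell\ge2}\lambda_{n,m,\ell}\bigr)$ (both are $\Theta(\alpha^2)$ by \eqref{LS}), give $\sum_{\ell\ge2}|w_\ell|\le\frac{\Lambda_{n,m}-\lambda_{n,m,1}}{\Lambda_{n,m}}\bigl(1+O(1/m)\bigr)$. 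Since $1-\lambda_{n,m,1}/\Lambda_{n,m}=\Theta(\alpha)=\Theta(m/n)$, that $O(1/m)$ correction only contributes $O(1/n)$; adding the two pieces the leading ratios reconstitute $1$ and the remainder is $O(1/n)$. The range $m=O(1)$ is dealt with directly, exactly as in the proof of Lemma~\ref{lem:Gmns_general}, giving $|w_1|=1+O(1/n)$ and $\sum_{\ell\ge2}|w_\ell|=O(1/n)$ there as well.

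Granting the weight bound, the conclusion is immediate. Put $M_m:=\max_{0\le k\le m}|\delta_{n,k}(s)|$; taking absolute values in the recurrence yields $M_m\le(1+K/n)M_{m-1}+O\bigl((mn)^{-1}\bigr)$ with $M_0=0$, so
\[
    M_m\le\sum_{1\le k\le m}\Bigl(1+\tfrac Kn\Bigr)^{m-k}O\bigl((kn)^{-1}\bigr)
    \le O\bigl(e^{Km/n}\bigr)\,\frac1n\sum_{1\le k\le m}\frac1k=O\bigl(n^{-1}H_m\bigr),
\]
the decisive point being that $m\le n$ forces $e^{Km/n}=O(1)$. This gives $F_{n,m}(s)=1+O(n^{-1}H_m)$ uniformly for $0\le m\le n$ and $|s|\le\kappa$, as claimed.

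The main obstacle is precisely the uniform estimate $\sum_\ell|w_\ell|\le 1+O(1/n)$: no cancellation among the $w_\ell$ may be invoked, so one must show that the sum of their moduli exceeds $1$ by only $O(1/n)$ — an order of magnitude below the $O(1/m)$ deviations the finite product and the exponential factor contribute individually, and below the $\Theta(m/n)$ mass borne by the terms $\ell\ge2$. What makes it work is, first, the choice $\phi=\phi_1$ (Corollary~\ref{lem:Gmns_special}), which aligns the $\ell=1$ weight with the denominator, and second, the circumstance that every error term is multiplied by an extra power of $\ell$, hence of $\alpha=m/n$, so that after division by $\Lambda_{n,m}\asymp m/n$ the residual is genuinely $O(1/n)$. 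Even a bound of the form $\sum_\ell|w_\ell|\le 1+O(1/m)$ would be fatal, since iterated over the $\Theta(m)$ levels of the recurrence it would inflate $|\delta_{n,m}|$ by a positive power of $m$ instead of a bounded factor.
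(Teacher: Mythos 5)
Your proposal is correct, and it travels the same road as the paper — induction on $m$ through the recurrence for $F_{n,m}$, with Corollary~\ref{lem:Gmns_special} supplying the driving estimate $G_{n,m}(s)=1+O((mn)^{-1})$ — but it closes the induction differently. The paper never isolates your pivotal bound $\sum_{\ell}|w_\ell|\le 1+K/n$: it bounds $|F_{n,m}(s)-G_{n,m}(s)|$ by $\frac{CH_{m-1}}{n}\,G_{n,m}(s)$ using the monotonicity $H_{m-\ell}\le H_{m-1}$, and then closes the induction exactly by choosing $C=2C_1$ and $n$ large, the increment $H_m-H_{m-1}=\frac1m$ being precisely what absorbs the $O((mn)^{-1})$ error contributed by $G_{n,m}$. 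Your route replaces this sharp bookkeeping by the running maximum $M_m$ and a Gronwall-type unrolling; this tolerates the weaker per-step factor $1+O(n^{-1})$ (harmless since $m\le n$ keeps $(1+K/n)^m$ bounded), but obliges you to establish the modulus bound $\sum_\ell|w_\ell|\le 1+K/n$ separately, by re-running the factor-by-factor analysis of Lemma~\ref{lem:Gmns_general} with absolute values — which your sketch does correctly: the $\ell=1$ term matches the denominator to within $1+O(n^{-1})$ because $(1+\alpha\phi_1'(\alpha))S_1(\alpha)=\alpha$, and the $\ell\ge2$ mass is $\Theta(\alpha)$ with only an $O(1/m)$ relative perturbation, hence an $O(1/n)$ absolute one. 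In fact your version makes explicit a point the paper leaves tacit: for complex $s$ the weights are not nonnegative, so the paper's step $\sum_\ell|w_\ell|H_{m-\ell}\le H_{m-1}G_{n,m}(s)$ also secretly relies on a modulus estimate of exactly this kind (for real $|s|\le\kappa$ the two coincide, and real $s$ suffices for Curtiss's theorem). The trade-off is that the paper's telescoping yields a cleaner constant, while your argument is more mechanical and more robust to the size of the per-step error.
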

\pf We use induction on $m$ and show that there exists a constant $C
> 0$, such that
\[
    |F_{n,m}(s)-1| \le C n^{-1} H_m,
\]
for all $1 \le m \le n$, $n \ge n_0$ large enough and $|s| \le
\kappa$.

When $m=0$, the lemma holds, since $F_{n,0}(s) \equiv 1$.

Assume that the lemma holds for all functions $F_{n,k}(s)$ for $0
\le k \le m$ and $n \ge n_0$. By Corollary~\ref{lem:Gmns_special},
there exists a constant $C_{1} > 0$ such that for all $1 \le m \le
n$, $n \ge n_1$ large enough and $|s| \le \kappa_1$, $\kappa_{1} >
0$,
\begin{equation*}
    |G_{n,m}(s) - 1| \le C_{1}(m n)^{-1}.
\end{equation*}
Now
\begin{align*}
    |F_{n,m}(s)-1| &= |F_{n,m}(s)-G_{n,m}(s)
    + G_{n,m}(s)-1| \\
    &\le |F_{n,m}(s)-G_{n,m}(s)| + C_{1}(m n)^{-1} .
\end{align*}
The first term on the right-hand side can be re-written as
\begin{align*}
    &|F_{n,m}(s)-G_{n,m}(s)|\\
    &\qquad =
    \frac{\left|e^{\frac{s}{en}}
    \sum\limits_{1 \le \ell \le m} \lambda_{n,m,\ell}
    (F_{n,m-\ell}(s)-1)
    e^{-\big(\phi\left(\frac{m}{n}\right)
    -\phi\left(\frac{m-\ell}{n}\right)\big)s}
    \prod\limits_{m-\ell+1 \le r \le m}
    \left(1-\tfrac{s}{r}\right)\right|}
    {\left|1-\big(1-\Lambda_{n,m}\big) e^{\frac{s}{en}}\right|}.
\end{align*}
Since we assume $|s| \le 1$, the product involved in the sum on the
right-hand side is nonnegative and we have, by the induction
hypothesis,
\begin{align*}
    |F_{n,m}(s)-G_{n,m}(s)|
    &\le \frac{e^{\frac{s}{en}} \sum\limits_{1 \le \ell \le m}
    \lambda_{n,m,\ell} e^{-\big(\phi\left(\frac{m}{n}\right)
    -\phi\left(\frac{m-\ell}{n}\right)\big)s}\frac{C H_{m-\ell}}{n}
    \prod\limits_{m-\ell+1 \le r \le m}\left(1-\frac{s}{r}\right)}
    {1-\big(1-\Lambda_{n,m}\big) e^{\frac{s}{en}}}\\
    & \le  \frac{C H_{m-1}}{n}\, G_{n,m}(s)
    \le  \frac{C H_{m-1}}{n} + \frac{C H_{m-1}}{n}
    \cdot |G_{n,m}(s)-1| \\
    & \le  \frac{C H_{m-1}}{n} + \frac{C H_{m-1}}{n}
    \cdot \frac{C_{1}}{m n}.
\end{align*}
It follows that
\begin{align*}
    |F_{n,m}(s)-1|\le
    \frac{C H_{m}}{n} + \frac{1}{m n}
    \left(C_{1} - C + C_{1} C\frac{H_{m-1}}{n}\right).
\end{align*}
Choose first $n_2 \ge n_1$ such that $\frac{H_{n_2-1}}{n_2} \le
\frac{1}{2 C_1}$, which implies that $C_{1} C \frac{H_{m-1}}{n} \le
\frac{C}{2}$ for $1 \le m \le n$ and $n \ge n_2$. Then choose $C = 2
C_1$. We then have
\[
    C_{1} - C + \frac{C_{1} C H_{m-1}}{n}
    \le C_{1} - \frac{C}{2} \le 0,
\]
and thus
\[
    |F_{n,m}(s)-1| \le \frac{C H_{m}}{n}.
\]
Note that apart from requiring $|s| \le 1$ the only restriction on
$s$ comes from $G_{n,m}(s)$, thus we may choose $\kappa =
\min(1,\kappa_{1})$. This completes the proof. \qed

\paragraph{The Gumbel limit laws for $X_{n,m}$ ($m\to\infty$).}
We prove Theorem~\ref{thm:Y-all} by Proposition~\ref{prop-Y-all}.
Since $m \to \infty$, we have
\begin{align}
    \mathbb{E}\left(e^{\frac{X_{n,m}}{en}\,s - (\log m +
    \phi_{1}(\alpha))s}\right)
    &= P_{n,m}\left(e^{\frac s{e n}}\right)
    e^{-H_{m} s +\gamma s- \phi_{1}(\frac{m}{n}) s}
    \left(1+O\left(m^{-1}\right)\right)\nonumber \\
    &= e^{\gamma s} F_{n,m}(s) \prod_{1 \le r \le m}
    \frac{e^{-\frac sr}}{1-\frac{s}{r}}
    \left(1+O\left(m^{-1}\right)\right)\nonumber \\
    &= \Gamma(1-s)\left(1+O\left(\frac{\log m}{n}
    + \frac1m\right)\right) .\label{G-m}
\end{align}
Thus Theorem~\ref{thm:Y-all} follows from another application of
Curtiss's theorem (see \cite[\S 5.2.3]{HMC13}).

\paragraph{The Gumbel limit law for $X_n$.}
We now prove Theorem~\ref{thm:Y}, starting from the moment
generating function ($\bar{\rho}:=1-\rho$)
\[
    \mathbb{E}\left(e^{X_ns}\right)
    = \sum_{0\le m\le n} \binom{n}{m}\rho^m\bar{\rho}^{n-m}
    P_{n,m}\left(e^s\right) .
\]
Then
\begin{align*}
    \mathbb{E}\left(e^{\frac{X_n}{en}s
    - (\log \rho n +\phi_1(\rho))s}\right)
    &= \sum_{0\le m\le n} \binom{n}{m}
    \rho^m \bar{\rho}^{n-m}
    P_{n,m}\left(e^{\frac{s}{en}}\right)
    e^{-(H_m-\gamma+\phi_1(\alpha))s+ \delta_{n,m} s},
\end{align*}
where
\[
    \delta_{n,m}
    := H_m- \log \rho n-\gamma+\phi_1(\alpha) -\phi_1(\rho).
\]
Since the binomial distribution is highly concentrated around the
range $m=\rho n+x\sqrt{\rho\bar{\rho}n}$ where $x=o(n^{\frac16})$,
we see that
\[
    \delta_{n,m} = O\left(n^{-\frac12}|x|\right),
\]
for $m$ in this central range. By a standard argument (Gaussian
approximation of the binomial and exponential tail estimates) using
the expansion \eqref{G-m}, we then deduce that
\[
    \mathbb{E}\left(e^{\frac{X_n}{en}s
    - (\log \rho n +\phi_1(\rho))s}\right)
    = \Gamma(1-s)\left(1+O\left(n^{-\frac12}
    \right)\right).
\]
This proves Theorem~\ref{thm:Y}.

\section{Analysis of the ($1+1$)-EA for \textsc{LeadingOnes}}
\label{sec:LO}

We consider the complexity of the ($1+1$)-EA when the underlying
fitness function is the number of leading ones. This problem has
been examined repeatedly in the literature due to the simple
structures it exhibits; see \cite{BDN10,DJW02,Ladret05} and the
references therein. The strongest results obtained were those by
Ladret \cite{Ladret05} (almost unknown in the EA literature) where
she proved that the optimization time under \textsc{LeadingOnes} is
asymptotically normally distributed with mean asymptotic to
$\frac{e^c-1}{2c^2}n^2$ and variance to $\frac{3(e^{2c}-1)}{8c^3}
n^3$, where $p=\frac cn$, $c>0$.

We re-visit this problem and obtain similar type of results by a
completely different approach, which can be readily amended for
obtaining the convergence rate.

Throughout this section, the probability $p$ still carries the same
meaning from Algorithm $(1+1)$-EA and $q=1-p$.

\begin{lmm} Let $Y_{n,m}$ denote the conditional optimization
time when beginning with a random input (each bit being 1 with
probability $\frac12$) that has $n-m$ leading ones. Then the moment
generating function $Q_{n,m}(s)$ of $Y_{n,m}$ satisfies the
recurrence relation
\begin{align} \label{Qnms}
    Q_{n,m}(s)
    = \frac{pq^{n-m}e^s}{1-(1-pq^{n-m})e^s}
    \left(2^{1-m}+ \sum_{1\le \ell<m}
    \frac{Q_{n,\ell}(s)}{2^{m-\ell}}\right),
\end{align}
for $1\le m\le n$, where $q=1-p$.
\end{lmm}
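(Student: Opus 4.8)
The plan is to set up the natural state description for \textsc{LeadingOnes}, isolate the distributional invariant that the $(1+1)$-EA preserves, analyze a single iteration, and then read off the generating-function recurrence \eqref{Qnms} from a one-step renewal decomposition. First I would record the exact meaning of the conditioning: having $n-m$ leading ones (with $1\le m\le n$) means $x_1=\dots=x_{n-m}=1$, that $x_{n-m+1}=0$, and that the ``free'' bits $x_{n-m+2},\dots,x_n$ are i.i.d.\ $\mathrm{Bernoulli}(\tfrac12)$. The crucial observation, which I would state as an internal claim, is that this structure is \emph{invariant} under one iteration as long as the fitness remains $n-m$: if the mutant $\mathbf{y}$ is rejected the state is unchanged, while if it is accepted with $f(\mathbf{y})=f(\mathbf{x})=n-m$ the free bits of $\mathbf{y}$ are those of $\mathbf{x}$ each flipped independently with probability $p$, hence still i.i.d.\ $\mathrm{Bernoulli}(\tfrac12)$; moreover the conditioning that the leading block (and the bit $x_{n-m+1}$) stays put involves coordinates disjoint from the free bits and so does not disturb their law. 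Consequently the optimization time from \emph{any} such state has the single common law $Y_{n,m}$, which is what makes a renewal argument possible.

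Next I would analyze one iteration from a state with $f=n-m$. A mutation strictly increases the fitness exactly when none of the $n-m$ leading ones flips and the leading zero $x_{n-m+1}$ flips to $1$, an event of probability $\varrho:=pq^{n-m}$; otherwise $f$ stays at $n-m$. Hence the number $G$ of iterations up to and including the first fitness-increasing one is geometric with success probability $\varrho$, so $\mathbb{E}(e^{sG})=\varrho e^{s}/(1-(1-\varrho)e^{s})$. Conditionally on such a step, the new leading block is the old $n-m$ ones, the newly flipped $1$ at position $n-m+1$, and then the maximal run $L$ of $1$s among the (still i.i.d.\ uniform) bits $x_{n-m+2},\dots,x_n$, so $\mathbb{P}(L=j)=2^{-(j+1)}$ for $0\le j\le m-2$ and $\mathbb{P}(L=m-1)=2^{-(m-1)}$. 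Writing the new number of non-leading-ones as $\ell=m-1-L$, the optimum is reached ($\ell=0$) with probability $2^{1-m}$, and $\ell=j$ occurs with probability $2^{-(m-j)}$ for $1\le j\le m-1$. By the invariant, on $\{\ell\ge1\}$ the remaining time is an independent copy of $Y_{n,\ell}$, and the free bits beyond the new leading block are again i.i.d.\ uniform independently of the realized $\ell$, so $Y_{n,m}\stackrel{d}{=}G+\dbbracket{\ell\ge1}\,Y_{n,\ell}'$. Taking $\mathbb{E}(e^{s\,\cdot})$ and using the mutual independence of $G$, of $\ell$ (given a success), and of the continuation yields
\begin{align*}
    Q_{n,m}(s)=\frac{pq^{n-m}e^{s}}{1-(1-pq^{n-m})e^{s}}\left(2^{1-m}+\sum_{1\le\ell<m}\frac{Q_{n,\ell}(s)}{2^{m-\ell}}\right),
\end{align*}
which is \eqref{Qnms}; the case $m=1$ (empty sum, $Y_{n,1}$ geometric with parameter $pq^{n-1}$) serves as the base case.

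The main obstacle I anticipate is the careful justification of the invariant, and in particular arguing that after a fitness-increasing step the bits lying beyond the new leading block are again i.i.d.\ uniform and independent of the value of $\ell$, so that the continuation genuinely is a fresh copy of $Y_{n,\ell}$ rather than merely having the same marginal. This amounts to tracking, at each stage, which coordinates are conditioned on and verifying that those conditioning events involve coordinates disjoint from the ones whose unconditioned uniformity is being invoked; once this bookkeeping is in place the generating-function manipulation is routine. A secondary point to treat explicitly is the boundary of the run-length distribution, namely the event $L=m-1$ that the run of $1$s reaches the end of the string, since it is precisely this event that produces the $2^{1-m}$ term (and for $m=1$ it is the only possibility).
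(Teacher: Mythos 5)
Your proposal is correct and follows essentially the same route as the paper: compute the one-step transition probabilities $pq^{n-m}2^{-(m-\ell)}$ (with the boundary case $2^{1-m}$ for reaching the optimum) and read off the recurrence, the only cosmetic difference being that you aggregate the waiting time into an explicit geometric factor whereas the paper writes the first-step equation and solves for $Q_{n,m}(s)$. Your additional care in verifying that the free bits remain i.i.d.\ $\mathrm{Bernoulli}(\tfrac12)$ after accepted non-improving steps is a point the paper leaves implicit, and is a worthwhile clarification rather than a deviation.
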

\pf The probability of jumping from a state with $n-m$ leading ones
to another state with $n-m+\ell$ leading ones is given by
\[
    (1-p)^{n-m} \cdot p\cdot  2^{-\ell}\qquad(1\le \ell <m),
\]
which corresponds to the situation when the first $n-m$ bits do not
toggle their values, the $(n-m+1)$st bit toggles (from $0$ to $1$),
together with the following $\ell-1$ bits also being 1. When $\ell =
m$, the probability becomes
\[
    (1-p)^{n-m} \cdot p\cdot  2^{-\ell+1}.
\]
We thus obtain the recurrence relation
\[
    Q_{n,m}(s)
    = pq^{n-m}e^s \left(2^{1-m}+ \sum_{1\le \ell<m}
    \frac{Q_{n,\ell}(s)}{2^{m-\ell}}\right)+
    (1-pq^{n-m})e^sQ_{n,m}(s),
\]
which implies \eqref{Qnms}. \qed

The most interesting case is when $p\asymp n^{-1}$ (roughly, $pq^n$
is linear, giving rise to polynomial bounds for the cost), all other
cases when $pn\to\infty$ lead to higher-order complexity.

\paragraph{Small $m$.}
We start with the simplest case when $m=1$ and obtain, by
\eqref{Qnms},
\begin{align}\label{Pn1}
    Q_{n,1}(s) = \frac{pq^{n-1}e^s}{1-(1-pq^{n-1})e^s}.
\end{align}
Then the mean of $Y_{n,1}$ is simply given by
\[
    \mathbb{E}(Y_{n,1}) = \frac1{pq^{n-1}},
\]
which, by substituting $p=\frac cn$, yields
\[
    \mathbb{E}(Y_{n,1}) = \frac{e^c}{c}\, n
    -\left(1-\frac c2\right)e^c + O\left(\frac{e^c}
    {n}(1+c^3)\right).
\]
Note that this estimate holds as long as $c=o(\sqrt{n})$. Similarly,
the variance is given by
\[
    \mathbb{V}(Y_{n,1})
    = \frac1{(pq^{n-1})^2}-\frac1{pq^{n-1}},
\]
which satisfies, when $p=\frac cn$,
\begin{align*}
    \mathbb{V}(Y_{n,1}) &= \frac{e^{2c}}{c^2}\, n^2
    -\frac{e^c+(2-c)e^{2c}}{c}\,n
    +\left(1-\frac c2\right)e^c +
    \left(1-\frac43c+\frac{c^2}2\right)e^{2c}\\
    &\qquad + O\left(\frac{e^{2c}}
    {n}(c^2+c^4)\right),
\end{align*}
uniformly when $c=o(\sqrt{n})$.

We then consider the normalized random variables $cY_{n,1}/(e^cn)$.
By the expansion
\[
    \frac{\frac cn\left(1-\frac cn\right)^{n-m}
    \exp\left(\frac{ce^{-c}}{n}\,s\right)}
    {1-\left(1-\frac cn\left(1-\frac cn\right)^{n-m}\right)
    \exp\left(\frac{ce^{-c}}{n}\,s\right)}
    = \frac1{1-s}+ O\left(\frac{c|s|(m+c)}{n|1-s|^2}\right),
\]
uniformly when $s$ is away from $1$ and $m=o(n)$, we obtain
\[
    \mathbb{E}\left(e^{cY_{n,1}s/(e^cn)}\right)
    \to \frac1{1-s},
\]
implying that the limit law is an exponential distribution with the
density $e^{-x}$. While \eqref{Pn1} shows that $Y_{n,1}\equiv
X_{n,1}$ when $p=\frac1n$, they behave differently when $m\ge2$.

\begin{thm} For each $1\le m=O(1)$, the limit distribution of
$cY_{n,m}/(e^c n)$ is a binomial mixture of Gamma distributions;
more precisely, \label{thm:XO1}
\begin{align}\label{Xnm-mO1}
    \mathbb{P}\left(\frac{c Y_{n,m}}{e^c n}\le x\right)
    \to  \frac{1}{2^{m-1}}\sum_{0\le j<m}
    \binom{m-1}{j}\int_0^xe^{-t}\frac{t^j}{j!} \dd t
     \qquad(x>0),
\end{align}
as $n\to\infty$. The mean and the variance satisfy
\begin{align}\label{Xnm-mv-s}
    \mathbb{E}(Y_{n,m}) \sim \frac{m+1}{2ce^{-c}}\,n,
    \qquad \mathbb{V}(Y_{n,m}) \sim \frac{3m+1}
    {4c^2e^{-2c}}\,n^2.
\end{align}
\end{thm}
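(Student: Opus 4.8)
The plan is to follow the route of Theorem~\ref{thm:YO1}: rescale the variable in the recurrence~\eqref{Qnms}, pass to the limit to obtain a recurrence for the limiting moment generating function, solve that recurrence in closed form, and then apply Curtiss's convergence theorem. With $p=\frac cn$ and $c>0$ fixed, expanding $\log q^{n-m}$ (or using Stirling's formula~\eqref{eqn:StirForm}) gives $pq^{n-m}=\frac{ce^{-c}}{n}\bigl(1+O(n^{-1})\bigr)$ uniformly for $m=O(1)$. Hence, substituting $s=\frac{ct}{e^{c}n}=\frac{ce^{-c}}{n}\,t$ — so that $Q_{n,m}(s)=\mathbb{E}\bigl(e^{t\,cY_{n,m}/(e^{c}n)}\bigr)$ — the prefactor in~\eqref{Qnms} satisfies
\[
    \frac{pq^{n-m}e^{s}}{1-(1-pq^{n-m})e^{s}}
    = \frac{1}{1-t}\left(1+O\!\left(\frac{1}{n|1-t|}\right)\right),
\]
uniformly for $|t|\le 1-\ve$.

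First I would show, by induction on each fixed $m\ge1$, that $Q_{n,m}\bigl(\tfrac{ct}{e^{c}n}\bigr)\to \widehat Q_m(t)$ uniformly for $|t|\le 1-\ve$, where $\widehat Q_1(t)=\tfrac1{1-t}$ and, for $m\ge2$,
\[
    \widehat Q_m(t)=\frac1{1-t}\left(2^{1-m}+\sum_{1\le \ell<m}\frac{\widehat Q_\ell(t)}{2^{m-\ell}}\right).
\]
The base case is exactly~\eqref{Pn1}; the inductive step combines the displayed expansion of the prefactor with the finitely many induction hypotheses, the accumulated $O(n^{-1})$ errors being harmless because $m$ is bounded.

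Next I would solve this limiting recurrence, claiming
\[
    \widehat Q_m(t)=\frac1{2^{m-1}}\sum_{0\le j<m}\binom{m-1}{j}\frac{1}{(1-t)^{j+1}}.
\]
Inserting the inductive hypothesis for $\ell<m$ into the recurrence and interchanging the two summations, the inner sum $\sum_{j<\ell<m}\binom{\ell-1}{j}$ collapses to $\binom{m-1}{j+1}$ by the hockey-stick identity, and a reindexing yields the claimed form. Since $(1-t)^{-(j+1)}$ is the moment generating function of a $\mathrm{Gamma}(j+1,1)$ law, $\widehat Q_m$ is precisely the moment generating function of the binomial mixture of Gamma distributions on the right-hand side of~\eqref{Xnm-mO1}, so Curtiss's theorem (see \cite[\S5.2.3]{HMC13}) gives the convergence in distribution~\eqref{Xnm-mO1}.

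Finally, the moment estimates~\eqref{Xnm-mv-s} follow from the uniformity of the convergence on $|t|\le1-\ve$: Cauchy's estimates give $\frac{c\,\mathbb{E}(Y_{n,m})}{e^{c}n}\to\widehat Q_m'(0)$ and $\frac{c^{2}\,\mathbb{E}(Y_{n,m}^{2})}{e^{2c}n^{2}}\to\widehat Q_m''(0)$, whence also the variance. Using $\sum_j\binom{m-1}{j}=2^{m-1}$, $\sum_j j\binom{m-1}{j}=(m-1)2^{m-2}$ and $\sum_j j^{2}\binom{m-1}{j}=m(m-1)2^{m-3}$ one finds $\widehat Q_m'(0)=\frac{m+1}{2}$ and $\widehat Q_m''(0)-\widehat Q_m'(0)^{2}=\frac{3m+1}{4}$, which are exactly the constants in~\eqref{Xnm-mv-s}. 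The only genuinely non-routine point — and the one real difference from the \textsc{OneMax} case — is that~\eqref{Qnms} is a full-history recurrence, so the limit $\widehat Q_m$ is a \emph{sum} rather than a product; extracting its closed form via the hockey-stick identity is the main obstacle, everything else being a bounded-length iteration of the estimates already developed.
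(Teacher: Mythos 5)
Your proposal is correct and follows essentially the same route as the paper: induction on the recurrence \eqref{Qnms} after the substitution $s=\frac{ct}{e^c n}$ to get the limiting moment generating function, identification of that limit with the Gamma-mixture law, Curtiss's theorem for the convergence in distribution, and a direct computation of the first two moments from the limit. The only (cosmetic) difference is that you write the limit as the partial-fraction sum $\frac1{2^{m-1}}\sum_{0\le j<m}\binom{m-1}{j}(1-t)^{-(j+1)}$ obtained via the hockey-stick identity, whereas the paper records the equivalent product form $\frac{(1-\frac t2)^{m-1}}{(1-t)^m}$ and verifies the corresponding Laplace-transform identity; the two agree by the binomial theorem.
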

Note that when $m=1$, \eqref{Xnm-mO1} degenerates to the exponential
distribution. On the other hand, the normalizing factor $ce^{-c}n$
is not asymptotically equivalent to the mean.
\begin{proof}
By induction and \eqref{Qnms}, we see that
\[
    \mathbb{E}\left(e^{cY_{n,m}s/(e^cn)}\right)
    \to \frac{(1-\frac s2)^{m-1}}{(1-s)^m},
\]
when $m=O(1)$. Since
\[
    \int_0^\infty e^{-x(1-s)}
    \sum_{0\le j<m}\frac1{2^{m-1}}\binom{m-1}{j}
    \frac{x^j}{j!} \dd x
    = \frac{(1-\frac s2)^{m-1}}{(1-s)^m},
\]
we then deduce \eqref{Xnm-mO1}. The mean and the variance then
follows from straightforward calculations.
\end{proof}

\paragraph{Mean and the variance of $Y_{n,m}$: $1\le m\le n$.}
The recurrence \eqref{Qnms} is much simpler than \eqref{Qnmt} and we
can indeed obtain very precise expressions and approximations for
the mean and the variance.
\begin{thm} The mean $\nu_{n,m}$ and the variance $\varsigma_{n,m}$
of $Y_{n,m}$ are given explicitly as follows. For $1\le m\le n$
\label{thm:Xmv}
\begin{align}\label{mu-exact}
    \nu_{n,m} := \mathbb{E}(Y_{n,m}) = \frac1{pq^{n-1}}
    \left(\frac{1-q^{m-1}}{2p} + q^{m-1}\right),
\end{align}
and
\begin{align}\label{V-exact}
    \varsigma_{n,m}^2 := \mathbb{V}(Y_{n,m}) = -\nu_{n,m}
    + \frac{3q^2-(4q^2-1)q^{2m}}{4p^3(1+q)q^{2n}}.
\end{align}
\end{thm}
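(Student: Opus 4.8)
The plan is to extract from the recurrence \eqref{Qnms} two scalar recurrences, one for the mean and one for the second moment of $Y_{n,m}$, and to solve each by telescoping. Write $G_{n,m}(s):=\frac{pq^{n-m}e^{s}}{1-(1-pq^{n-m})e^{s}}$ for the geometric factor, with success probability $\theta_{m}:=pq^{n-m}$, and set $R_{n,m}(s):=Q_{n,m}(s)/G_{n,m}(s)=2^{1-m}+\sum_{1\le\ell<m}2^{\ell-m}Q_{n,\ell}(s)$, so that $Y_{n,m}$ is distributed as an independent sum of a $\mathrm{Geom}(\theta_{m})$ variable and a ``free-riding'' mixture variable with moment generating function $R_{n,m}$. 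The structural key is the one-term recurrence $R_{n,m}(s)=\tfrac12\bigl(R_{n,m-1}(s)+Q_{n,m-1}(s)\bigr)$ for $m\ge2$ (obtained by comparing $R_{n,m}$ with $\tfrac12R_{n,m-1}$, where the tails of the two sums cancel), with $R_{n,1}(s)=1$; combined with $Q_{n,m}=G_{n,m}R_{n,m}$ this uncouples the analysis from the full sum in \eqref{Qnms}.

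First I would differentiate at $s=0$. With $\nu_{n,m}=Q_{n,m}'(0)$, $\rho_{n,m}=R_{n,m}'(0)$, and $G_{n,m}'(0)=1/\theta_{m}$, the product rule gives $\nu_{n,m}=1/\theta_{m}+\rho_{n,m}$, while the structural recurrence gives $\rho_{n,m}=\tfrac12(\rho_{n,m-1}+\nu_{n,m-1})$ with $\rho_{n,1}=0$. Eliminating $\rho$ yields the first-order recurrence $\nu_{n,m}=\nu_{n,m-1}+\frac{2q-1}{2pq^{n-m+1}}$ with $\nu_{n,1}=1/(pq^{n-1})$ (consistent with \eqref{Pn1}). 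Telescoping and summing the resulting geometric series in $q^{-k}$ produces \eqref{mu-exact} after simplification via $p+q=1$.

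For the variance I would repeat the scheme at second order. With $M_{n,m}=Q_{n,m}''(0)=\mathbb{E}(Y_{n,m}^{2})$, $N_{n,m}=R_{n,m}''(0)$, and $G_{n,m}''(0)=(2-\theta_{m})/\theta_{m}^{2}$, the product rule gives $M_{n,m}=N_{n,m}+\frac{2-\theta_{m}}{\theta_{m}^{2}}+\frac{2\rho_{n,m}}{\theta_{m}}$, where $\rho_{n,m}=\nu_{n,m}-1/\theta_{m}$ is already in closed form from the first part; the structural recurrence gives $N_{n,m}=\tfrac12(N_{n,m-1}+M_{n,m-1})$ with $N_{n,1}=0$. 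Hence $N_{n,m}-N_{n,m-1}=\tfrac12(M_{n,m-1}-N_{n,m-1})=\tfrac12\bigl(\frac{2-\theta_{m-1}}{\theta_{m-1}^{2}}+\frac{2\rho_{n,m-1}}{\theta_{m-1}}\bigr)$, each summand being a linear combination of $q^{-2k}$, $q^{-k}$ and constants, so telescoping gives an explicit $N_{n,m}$, then $M_{n,m}$, and finally $\varsigma_{n,m}^{2}=M_{n,m}-\nu_{n,m}^{2}$; collecting powers of $q$ and using $p+q=1$ yields \eqref{V-exact}. As a sanity check, at $m=1$ both sides reduce to $\mathbb{V}(\mathrm{Geom}(pq^{n-1}))=(1-pq^{n-1})/(pq^{n-1})^{2}$.

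The one delicate point is the final assembly of the variance: $\varsigma_{n,m}^{2}$ is built from $N_{n,m}$, the term $\frac{2-\theta_{m}}{\theta_{m}^{2}}+\frac{2\rho_{n,m}}{\theta_{m}}$, and $-\nu_{n,m}^{2}$, and these contain mixed powers such as $q^{m-2n}$ that must cancel so that only the $q^{-2n}$ and $q^{2m-2n}$ contributions of \eqref{V-exact} survive; checking this cancellation and pinning down the rational constants and the factor $1+q$ is where the bookkeeping lies, but it is entirely elementary algebra once the telescoped forms of $\nu_{n,m}$ and $N_{n,m}$ are in hand. No asymptotics are required: the statement is an exact identity valid for all $1\le m\le n$.
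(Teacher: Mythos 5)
Your proposal is correct and follows essentially the same route as the paper: the paper reduces \eqref{Qnms} to the convolution recurrence $a_m=b_m+\sum_{1\le\ell<m}a_\ell 2^{\ell-m}$ and solves it by a generating-function lemma ($a_m=b_m+\tfrac12\sum_{j<m}b_j$), applied with $b_m=1/(pq^{n-m})$ for the mean and $b_m=(2\nu_{n,m}-1)/(pq^{n-m})$ for the second moment, which is exactly the inhomogeneous term your product rule produces since $\frac{2-\theta_m}{\theta_m^2}+\frac{2\rho_{n,m}}{\theta_m}=\frac{2\nu_{n,m}-1}{\theta_m}$. Your telescoping identity $R_{n,m}=\tfrac12(R_{n,m-1}+Q_{n,m-1})$ is just a direct derivation of that same lemma, so the two arguments coincide up to packaging.
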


With these closed-form expressions, we easily obtain, assuming
$p=\frac cn$, where $c>0$,
\[
    \nu_{n,m} = \frac{e^c(m+1)}{2c}\,n
    -\frac{e^c}{4}\left(m^2+(3-c)m-c\right)
    +O\left(\frac{ce^c}{n}\left(m^3+mc^2\right)
    \right),
\]
and
\begin{align*}
    \varsigma_{n,m}^2 &= \frac{e^{2c}(3m+1)}{4c^2}\, n^2
    -\frac{e^{2c}}{8c}\left(3m^2+(5-3c)m-c+
    2(m+1)e^{-c}\right)n \\
    &\qquad +O\left(e^{2c}\left(m^3+c^2\right)
    \right),
\end{align*}
uniformly for $cm=o(n)$. We see that the asymptotic equivalents
\eqref{Xnm-mv-s} indeed hold in the wider range $cm=o(n)$.

More uniform approximations have the following forms.
\begin{cor} Assume that $p=\frac cn$, where $c=o(\sqrt{n})$.
Then, uniformly for $0\le \alpha:=\frac mn\le 1$,
\begin{align}\label{LO-mu}
    \nu_{n,m} = \frac{e^c}{2c^2}\left(1-e^{-c\alpha}\right) n^2
    +\frac{e^c}{4c}\left(c-2+e^{-c\alpha}\left(
    4-c+c\alpha\right)\right)n+O\left( c(c+1)e^c
    \right),
\end{align}
and
\begin{align}\label{LO-var}
    \varsigma_{n,m}^2 =
    \frac{3e^{2c}}{8c^3}\left(1-e^{-2c\alpha}\right)n^3
    +O\left(c^{-2}e^{2c}(1+c)n^2\right).
\end{align}
\end{cor}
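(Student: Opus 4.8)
The plan is to obtain both approximations directly from the closed forms \eqref{mu-exact} and \eqref{V-exact} by substituting $p=\frac cn$, $q=1-\frac cn$, and expanding each power of $q$ asymptotically. The only analytic input needed is the logarithm expansion: since $-j\log\!\big(1-\frac cn\big)=\frac{cj}{n}+\frac{c^2j}{2n^2}+O\!\big(\frac{c^3j}{n^3}\big)$ for $0\le j\le 2n$ and $c\le n/2$, we get the uniform estimates
\[
    q^{-kn}=e^{kc}\Big(1+\tfrac{kc^2}{2n}+O\big(\tfrac{c^3+c^4}{n^2}\big)\Big),
    \qquad
    q^{km}=e^{-kc\alpha}\Big(1-\tfrac{kc^2\alpha}{2n}+O\big(\tfrac{c^3+c^4}{n^2}\big)\Big),
\]
for $k\in\{1,2\}$ and $1\le m\le n$ (which is what ``$0\le\alpha\le1$'' means here). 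The hypothesis $c=o(\sqrt n)$ is precisely what makes the relative errors $o(1)$ and keeps them below the order of the claimed remainders. Every $O$-term below is uniform in $\alpha\in[0,1]$ because it is uniform in $m$.

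For the mean, I would write \eqref{mu-exact} as $\nu_{n,m}=\frac{1-q^{m-1}}{2p^2q^{n-1}}+\frac{q^{m-1}}{pq^{n-1}}$ and use $q^{1-n}=q\,q^{-n}$, $q^{m-1}q^{1-n}=q^{m}q^{-n}$. The estimates above give $q\,q^{-n}=e^{c}\big(1+\frac{c^2/2-c}{n}+O(\frac{c^3+c^4}{n^2})\big)$ and $q^{m}q^{-n}=e^{c(1-\alpha)}\big(1+\frac{c^2(1-\alpha)/2}{n}+O(\frac{c^3+c^4}{n^2})\big)$. Multiplying by $\frac{n^2}{2c^2}$, by $-\frac{n^2}{2c^2}$, and by $\frac nc$ respectively and collecting powers of $n$: the $n^2$-coefficients add to $\frac{e^c}{2c^2}-\frac{e^{c(1-\alpha)}}{2c^2}=\frac{e^c}{2c^2}(1-e^{-c\alpha})$; the $n$-coefficients add to $\frac{e^c(c-2)}{4c}-\frac{e^{c(1-\alpha)}(1-\alpha)}{4}+\frac{e^{c(1-\alpha)}}{c}$, which regroups exactly as $\frac{e^c}{4c}\big(c-2+e^{-c\alpha}(4-c+c\alpha)\big)$; and every further term is $O(c(1+c)e^c)$ or smaller, because the $\frac1{n^2}$-corrections contribute a factor $\frac{n^2}{c^2}\cdot\frac{c^3+c^4}{n^2}=c(1+c)$ and the leftover $\frac1n$-corrections contribute $O(ce^c)$. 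This proves \eqref{LO-mu}.

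For the variance I would start from \eqref{V-exact}. The term $-\nu_{n,m}$ is $O(c^{-2}e^{c}n^2)=O(c^{-2}e^{2c}n^2)$, already inside the stated remainder, so it remains to expand $\frac{3q^2-(4q^2-1)q^{2m}}{4p^3(1+q)q^{2n}}$, where $\frac1{4p^3}=\frac{n^3}{4c^3}$, $\frac1{1+q}=\frac12\big(1+O(\frac cn)\big)$ and $q^{-2n}=e^{2c}\big(1+O(\frac{c^2}{n})\big)$. Using $3q^2=3-\frac{6c}{n}+O(\frac{c^2}{n^2})$, $4q^2-1=3-\frac{8c}{n}+O(\frac{c^2}{n^2})$ and the expansion of $q^{2m}$, the numerator equals $3(1-e^{-2c\alpha})+\frac1n\big(-6c+(8c+3c^2\alpha)e^{-2c\alpha}\big)+O\big(\frac{c^2(1+c)^2}{n^2}\big)$, whose error and $\frac1n$-term are $O\big(\frac{c(1+c)}{n}\big)$. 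Multiplying the three factors, the leading term is $\frac{n^3}{4c^3}\cdot\frac12\cdot e^{2c}\cdot3(1-e^{-2c\alpha})=\frac{3e^{2c}}{8c^3}(1-e^{-2c\alpha})n^3$, and every remaining contribution is at most of order $\frac{n^3}{c^3}e^{2c}\cdot\frac{c(1+c)}{n}=c^{-2}(1+c)e^{2c}n^2$; one checks, using $c^2=o(n)$, that the $\frac1{n^2}$-piece of the numerator is absorbed there too. This gives \eqref{LO-var}.

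The computations are elementary; the real work is the bookkeeping. I expect the main obstacle to be keeping all error terms genuinely uniform in $\alpha$ over the whole range while simultaneously tracking the correct $c$-dependence ($c(c+1)e^c$ for the mean, $c^{-2}(1+c)e^{2c}n^2$ for the variance) and the dominant exponential factor — in particular recognising that for the variance the $O(n^{-1})$ correction to the numerator must be retained because it feeds into the $n^2$-level of the answer, whereas the $O(n^{-2})$ correction and the whole of $-\nu_{n,m}$ are safely swallowed by the remainder.
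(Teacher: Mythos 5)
Your proposal is correct and follows exactly the route the paper intends: the paper states that ``The proofs of the two Corollaries are straightforward and omitted,'' the implicit argument being precisely the direct substitution of $p=\frac cn$ into the closed forms \eqref{mu-exact} and \eqref{V-exact} followed by uniform expansion of the powers of $q$. Your bookkeeping of the $n^2$- and $n$-coefficients for the mean and of the error orders $O(c(1+c)e^c)$ and $O(c^{-2}(1+c)e^{2c}n^2)$ checks out.
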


\paragraph{Proof of Theorem~\ref{thm:Xmv}.}
Our approach is based on \eqref{Qnms} and it turns out that all
moments satisfy the same simple recurrence of the following type.
\begin{lmm} The solution to the recurrence relation
\[
    a_m = b_m + \sum_{1\le\ell<m} \frac{a_\ell}{2^{m-\ell}}
    \qquad(m\ge1),
\]
is given by the closed-form expression
\begin{align} \label{am-bm}
    a_m = b_m + \frac12\sum_{1\le j<m}b_j.
\end{align}
\end{lmm}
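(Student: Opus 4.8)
The plan is to exploit the self-similar structure of the recurrence to turn this full-history (convolution-type) recurrence into a plain first-order one. Writing the recurrence out as
\[
    a_m = b_m + \tfrac12 a_{m-1} + \tfrac14 a_{m-2} + \cdots + \tfrac1{2^{m-1}}a_1 \qquad(m\ge1),
\]
I would take the same relation with $m$ replaced by $m-1$, multiply it by $\tfrac12$, and subtract it from the displayed one: every term $2^{-(m-\ell)}a_\ell$ with $1\le\ell\le m-2$ cancels, and what remains is
\[
    a_m - \tfrac12 a_{m-1} = b_m - \tfrac12 b_{m-1} + \tfrac12 a_{m-1},
\]
that is, $a_m = a_{m-1} + b_m - \tfrac12 b_{m-1}$ for $m\ge2$. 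The base case is $a_1 = b_1$, read off directly from the recurrence since the sum over $1\le\ell<1$ is empty.

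Next I would telescope this first-order recurrence:
\[
    a_m = a_1 + \sum_{2\le k\le m}\left(b_k - \tfrac12 b_{k-1}\right)
    = \sum_{1\le k\le m} b_k - \tfrac12\sum_{1\le j\le m-1} b_j,
\]
after reindexing $j=k-1$ in the second sum. Splitting the term $k=m$ off the first sum and combining the two remaining sums over $1\le k\le m-1$ gives precisely $a_m = b_m + \tfrac12\sum_{1\le j<m}b_j$, which is \eqref{am-bm}.

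As an independent check I would also verify \eqref{am-bm} directly by induction on $m$: substitute the candidate $a_\ell = b_\ell + \tfrac12\sum_{1\le j<\ell}b_j$ into the right-hand side $b_m + \sum_{1\le\ell<m}2^{-(m-\ell)}a_\ell$, swap the order of summation, and evaluate the inner geometric sums $\sum_{j+1\le\ell\le m-1}2^{-(m-\ell)} = 1-2^{1+j-m}$ and $\sum_{1\le\ell\le m-1}2^{-(m-\ell)} = 1-2^{1-m}$; collecting the coefficient of each $b_j$ then collapses everything to $b_m + \tfrac12\sum_{1\le j<m}b_j$. There is no genuine obstacle in this lemma — it is elementary bookkeeping. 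The only points that need a little care are the treatment of the empty sum at $m=1$ (which pins down the initial condition) and the reindexing after telescoping, so that the surviving sums combine over the correct ranges and the factor $\tfrac12$ ends up multiplying $\sum_{1\le j<m}b_j$ rather than the full sum $\sum_{1\le j\le m}b_j$.
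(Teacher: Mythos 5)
Your proof is correct, but it follows a different route from the paper's. The paper passes to generating functions: setting $f(z)=\sum_{m\ge1}a_mz^m$ and $g(z)=\sum_{m\ge1}b_mz^m$, the convolution becomes $f(z)=g(z)+\frac{z}{2-z}f(z)$, hence $f(z)=\frac{1-\frac z2}{1-z}\,g(z)$, and \eqref{am-bm} is read off from the coefficients of $\frac{1-\frac z2}{1-z}=1+\frac12\cdot\frac{z}{1-z}$. You instead work directly on the recurrence, subtracting $\tfrac12$ times the relation at $m-1$ to kill the full history and reduce to the first-order recurrence $a_m=a_{m-1}+b_m-\tfrac12 b_{m-1}$, which then telescopes; your algebra (including the base case $a_1=b_1$ and the reindexing that isolates the factor $\tfrac12$ on $\sum_{1\le j<m}b_j$) checks out, as does your direct inductive verification via the geometric sums. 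The two arguments are essentially the same computation in different clothing --- your differencing step is exactly the coefficient-level counterpart of clearing the denominator $2-z$ in the paper's functional equation --- but yours is marginally more elementary (no formal power series needed), while the paper's generating-function form is the one it actually reuses later, e.g.\ to derive the functional equation for $Q_n(z,s)$ and the product formula \eqref{Pnms-fp}.
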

\pf The corresponding generating functions $f(z) := \sum_{m\ge1}a_m
z^m$ and $g(z) := \sum_{m\ge1}b_m z^m$ satisfy the equation
\[
    f(z) = g(z)+ \frac{z}{2-z} \, f(z) ,
\]
or
\[
    f(z) = \frac{1-\frac z2}{1-z}\, g(z).
\]
This proves \eqref{am-bm}. \qed

From \eqref{Qnms} (by taking derivative with respect to $s$ and then
substituting $s=1$), we see that the mean $\nu_{n,m}$ satisfies the
recurrence
\[
    \nu_{n,m} = \frac1{pq^{n-m}}+
    \sum_{1\le\ell<m} \frac{\nu_{n,\ell}}{2^{m-\ell}}
    \qquad(m\ge1).
\]
Substituting $b_m= 1/(pq^{n-m})$ into \eqref{am-bm}, we obtain
\eqref{mu-exact}.

Similarly, for the second moment $s_{n,m} := \mathbb{E}(Y_{n,m}^2)$,
we have the recurrence
\[
    s_{n,m} = \frac{2\nu_{n,m}-1}{pq^{n-m}} +
    \sum_{1\le \ell<m}\frac{s_{n,\ell}}{2^{m-\ell}}.
\]
By the same procedure, we obtain
\begin{align}\label{snm}
    s_{n,m} = -\nu_{n,m} + \frac{q^2(2-q)-(q+1)q^{m+1}
    (2q-1)+(2q-1)(2q^2-1)q^{2m}}{2p^4(1+q)q^{2n}},
\end{align}
implying \eqref{V-exact}. This proves Theorem~\ref{thm:Xmv}. \qed

The proofs of the two Corollaries are straightforward and omitted.

\paragraph{A finite-product representation for $Q_{n,m}(s)$.}
The recurrence relation \eqref{Qnms} can indeed be solved explicitly
as follows.
\begin{prop} The moment generating function $Q_{n,m}(s)$ of $Y_{n,m}$
has the closed-form
\begin{align}\label{Pnms-fp}
    Q_{n,m}(s) = \frac1{1-\frac{1-e^{-s}}{pq^{n-m}}}
    \prod_{1\le j<m}
    \frac{1-\frac{1-e^{-s}}{2pq^{n-j}}}
    {1- \frac{1-e^{-s}}{pq^{n-j}}},
\end{align}
for $m\ge1$.
\end{prop}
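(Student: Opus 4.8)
The plan is to solve the recurrence \eqref{Qnms} explicitly by a substitution that strips away both the geometric factor in front and the dyadic weights inside the sum. First I would rewrite the prefactor: dividing numerator and denominator of $\frac{pq^{n-m}e^s}{1-(1-pq^{n-m})e^s}$ by $e^s$ gives $\bigl(1-\tfrac{1-e^{-s}}{pq^{n-m}}\bigr)^{-1}$. Setting $w_j := \tfrac{1-e^{-s}}{pq^{n-j}}$ and $c_j := (1-w_j)^{-1}$, the recurrence \eqref{Qnms} becomes
\[
    Q_{n,m}(s) = c_m\Bigl(2^{1-m} + \sum_{1\le\ell<m} 2^{\ell-m}\,Q_{n,\ell}(s)\Bigr)
    \qquad(m\ge1),
\]
with $Q_{n,1}(s)=c_1$, matching \eqref{Pn1}.

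Next I would clear the powers of two by putting $\widehat a_m := 2^{m-1}Q_{n,m}(s)$; multiplying the displayed recurrence by $2^{m-1}$ turns it into the cleaner form $\widehat a_m = c_m\bigl(1 + \sum_{1\le\ell<m}\widehat a_\ell\bigr)$. Introducing the partial sums $S_m := \sum_{1\le\ell\le m}\widehat a_\ell$ (with $S_0:=0$), this says $\widehat a_m = c_m(1+S_{m-1})$, whence
\[
    1+S_m = (1+S_{m-1}) + c_m(1+S_{m-1}) = (1+S_{m-1})(1+c_m).
\]
Telescoping from $1+S_0=1$ gives $1+S_m=\prod_{1\le j\le m}(1+c_j)$, and therefore $\widehat a_m = c_m\prod_{1\le j<m}(1+c_j)$, that is, $Q_{n,m}(s) = 2^{1-m}c_m\prod_{1\le j<m}(1+c_j)$.

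Finally I would unwind the notation. Since $1+c_j = 1+(1-w_j)^{-1} = \tfrac{2-w_j}{1-w_j} = 2\cdot\tfrac{1-w_j/2}{1-w_j}$, the factor $2^{1-m}\prod_{1\le j<m}(1+c_j)$ collapses to $\prod_{1\le j<m}\tfrac{1-w_j/2}{1-w_j}$; substituting back $w_j = \tfrac{1-e^{-s}}{pq^{n-j}}$ and $c_m = \bigl(1-\tfrac{1-e^{-s}}{pq^{n-m}}\bigr)^{-1}$ yields exactly \eqref{Pnms-fp}. I do not expect a genuine obstacle here, as the computation is elementary; the only mildly delicate point is the bookkeeping of the dyadic weights $2^{\ell-m}$, which the substitution $\widehat a_m = 2^{m-1}Q_{n,m}(s)$ disposes of cleanly. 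Alternatively one could simply verify that the product on the right of \eqref{Pnms-fp} satisfies \eqref{Qnms} together with the correct value at $m=1$, but the forward derivation above is shorter and additionally explains why the answer has this product shape.
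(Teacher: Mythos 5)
Your derivation is correct, and every step checks out: the prefactor does simplify to $(1-w_m)^{-1}$ with $w_m=\tfrac{1-e^{-s}}{pq^{n-m}}$, the substitution $\widehat a_m=2^{m-1}Q_{n,m}(s)$ cleans up the dyadic weights, the telescoping $1+S_m=(1+S_{m-1})(1+c_m)$ is exactly right, and the factor $1+c_j=2\cdot\tfrac{1-w_j/2}{1-w_j}$ absorbs the leftover $2^{1-m}$ to give \eqref{Pnms-fp}. Your route differs from the paper's: after the same initial rewriting $(1-\omega q^m)Q_{n,m}(s)=2^{1-m}+\sum_{\ell<m}2^{\ell-m}Q_{n,\ell}(s)$ (the paper writes $\omega q^m$ for your $w_m$), the paper first invokes its summation lemma \eqref{am-bm} to recast the sum, then passes to the bivariate generating function $Q_n(z,s)=\sum_m Q_{n,m}(s)z^m$, derives the $q$-difference functional equation $(1-z)Q_n(z,s)=z+\omega(1-\tfrac z2)Q_n(qz,s)$, and reads off the ratio $Q_{n,m}(s)/Q_{n,m-1}(s)$ from it. Your partial-sum telescoping is more elementary and self-contained --- it needs neither the auxiliary lemma nor generating functions, and it makes the product structure appear for a transparent reason ($1+S_m$ is multiplicative in the increments). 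What the paper's functional-equation route buys is a reusable mechanism: the same equation encodes all moments at once and is the natural object if one wants to iterate or perturb the recurrence, but for the purpose of establishing \eqref{Pnms-fp} alone your argument is shorter and equally rigorous.
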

\begin{proof}
Let $\omega := (1-e^{-s})/(pq^n)$. We start with the recurrence
(from \eqref{Qnms})
\[
    Q_{n,m}(s) = \omega q^m Q_{n,m}(s)
    + 2^{1-m} + \sum_{1\le \ell <m} \frac{Q_{n,\ell}(s)}
    {2^{m-\ell}},
\]
which, by \eqref{am-bm}, has the alternative form
\begin{align}\label{Qnms1}
    Q_{n,m}(s) = 1+ \omega q^m Q_{n,m}(s)
    + \frac{\omega}{2}\sum_{1\le h <m}
    q^h Q_{n,h}(s).
\end{align}
From \eqref{Qnms1}, we see that the bivariate generating function
$$Q_n(z,s) := \sum_{m\ge1}Q_{n,m}(s) z^m$$ of $Q_{n,m}(s)$ satisfies
\[
    Q_n(z,s) = \frac{z}{1-z} + \omega Q_n(qz,s)
    + \frac{\omega}{2}\cdot \frac{z}{1-z}\,Q_n(qz,s),
\]
which implies the simpler functional equation
\[
    Q_n(z,s) = \frac{z}{1-z} + \omega
    \frac{1-\frac z2}{1-z}\, Q_n(qz,s).
\]
Multiplying both sides by $1-z$ gives
\[
    (1-z)Q_n(z,s) = z + \omega
    \left(1-\frac z2\right) Q_n(qz,s),
\]
implying the relation
\[
    \frac{Q_{n,m}(s)}{Q_{n,m-1}(s)}
    = \frac{1-\frac12 \omega q^{m-1}}
    {1- \omega q^m} \qquad(m\ge2).
\]
Accordingly, we obtain the closed-form expression \eqref{Pnms-fp}.
\end{proof}

Let
\[
    G_m(t) := \frac{pq^{n-m} t}{1-(1-pq^{n-m})t}
\]
denote the probability generating function of a geometric
distribution $\text{Geo}(pq^{n-m})$ with parameter $pq^{n-m}$ and
support $\{1,2,\dots\}$.
\begin{cor} The random variables $Y_{n,m}$ can be decomposed as
the sum of $m$ independent random variables
\begin{align}\label{X-Z}
    Y_{n,m} \stackrel{d}{=} Z_{n,m}^{[0]} +\cdots
    + Z_{n,m}^{[m-1]},
\end{align}
where $Z_{n,m}^{[0]} \sim \text{Geo}(pq^{n-m})$ and the
$Z_{n,m}^{[j]}$ are mixture of $\text{Geo}(pq^{n-j})$
\[
    \mathbb{E}\left(t^{Z_{n,m}^{[j]}}\right)
    = \frac12\cdot\frac{1-(1-2pq^{n-j})t}{1-(1-pq^{n-j})t}
    = \frac12 + \frac{R_j(t)}2
    \qquad(j=1,\dots,m-1).
\]
\end{cor}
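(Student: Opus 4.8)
The plan is to read the decomposition straight off the product formula \eqref{Pnms-fp} for $Q_{n,m}(s)$ established in the preceding Proposition, after matching each factor with the moment generating function of one of the claimed summands. The starting observation is purely computational: if $W\sim\text{Geo}(\rho)$ on $\{1,2,\dots\}$ with probability generating function $\rho t/(1-(1-\rho)t)$, then substituting $t=e^{s}$ and simplifying gives $\mathbb{E}(e^{sW}) = 1/\bigl(1-(1-e^{-s})/\rho\bigr)$, valid for $s$ in a neighbourhood of $0$. Hence the leading factor $1/\bigl(1-\tfrac{1-e^{-s}}{pq^{n-m}}\bigr)$ in \eqref{Pnms-fp} is exactly the moment generating function of $Z_{n,m}^{[0]}\sim\text{Geo}(pq^{n-m})$.

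Next I would treat a generic factor of the product, $\dfrac{1-\tfrac{1-e^{-s}}{2pq^{n-j}}}{1-\tfrac{1-e^{-s}}{pq^{n-j}}}$ for $1\le j<m$. Writing $u:=\tfrac{1-e^{-s}}{pq^{n-j}}$, elementary algebra gives $\dfrac{1-u/2}{1-u}=\tfrac12+\tfrac12\cdot\dfrac1{1-u}=\tfrac12+\tfrac12 G_j(e^{s})$, where $G_j(t)=\dfrac{pq^{n-j}t}{1-(1-pq^{n-j})t}$ is the probability generating function of $\text{Geo}(pq^{n-j})$ (so $R_j(t)=G_j(t)$ in the notation of the statement). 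This identifies the factor as the moment generating function of $Z_{n,m}^{[j]}$, whose distribution is the $\tfrac12/\tfrac12$ mixture of the point mass at $0$ and $\text{Geo}(pq^{n-j})$; one checks that $\tfrac12+\tfrac12 G_j(t)$ has non-negative Taylor coefficients summing to $1$, so it is a bona fide probability distribution.

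Finally, since \eqref{Pnms-fp} exhibits $Q_{n,m}(s)=\mathbb{E}(e^{sY_{n,m}})$ as the product over $0\le j\le m-1$ of the moment generating functions just identified, and a product of moment generating functions (each analytic near $s=0$) is the moment generating function of the sum of independent copies of the corresponding variables, uniqueness of moment generating functions on a neighbourhood of the origin yields \eqref{X-Z}. There is essentially no obstacle here: the argument is a one-line consequence of \eqref{Pnms-fp} together with the two algebraic identities above. The only point requiring a word of care is that all series in play converge for $|s|$ small enough (equivalently $|t|<1$ under the substitution $t=e^{s}$), so that the manipulations and the appeal to uniqueness are legitimate; alternatively one can run the whole argument at the level of probability generating functions in $t$ and dispense with convergence considerations altogether.
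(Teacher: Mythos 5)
Your proposal is correct and follows exactly the route the paper intends: the corollary is stated as an immediate consequence of the finite-product representation \eqref{Pnms-fp}, with each factor identified as the moment generating function of the corresponding summand (the leading factor as $\mathrm{Geo}(pq^{n-m})$ and each remaining factor, via the algebraic identity $\frac{1-u/2}{1-u}=\frac12+\frac12\cdot\frac1{1-u}$, as the half-half mixture of a point mass at $0$ and $\mathrm{Geo}(pq^{n-j})$). The paper in fact omits any explicit argument, so your write-up supplies precisely the verification that is implicit there.
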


Thus the mean of $Y_{n,m}$ is given by
\[
    \mathbb{E}(Y_{n,m}) = \sum_{0\le j<m}
    \mathbb{E}\left(Z_{n,m}^{[j]}\right)
    = \frac1{pq^{n-m}}+ \frac12\sum_{1\le j<m} \frac1{pq^{n-j}},
\]
which is identical to \eqref{mu-exact}. Similarly, the variance of
$Y_{n,m}$ satisfies
\[
    \varsigma_{n,m}^2 = \sum_{0\le j<m}
    \mathbb{V}\left(Z_{n,m}^{[j]}\right)
    = \frac{3-2pq^{n-m}}{8(pq^{n-m})^2}+
    \frac12\sum_{1\le j<m} \frac{1-pq^{n-j}}{(pq^{n-j})^2},
\]
which is also identical to \eqref{V-exact}.

\begin{thm} The distributions of $\frac{Y_{n,m}-\nu_{n,m}}
{\varsigma_{n,m}}$ are asymptotically normal \label{thm:X-all}
\[
    \mathbb{P}\left(\frac{Y_{n,m}-\nu_{n,m}}
    {\varsigma_{n,m}}\le x\right)
    \to \Phi(x),
\]
uniformly as $m\to\infty$ (with $n$) and $m\le n$, where $\Phi(x) :=
\frac1{\sqrt{2\pi}} \int_{-\infty}^x e^{-\frac{t^2}2}\dd t$ denotes
the standard normal distribution function.
\end{thm}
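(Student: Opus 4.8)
The plan is to base everything on the explicit decomposition \eqref{X-Z} of $Y_{n,m}$ into a sum of $m$ \emph{independent} random variables, equivalently on the closed-form product \eqref{Pnms-fp} for the moment generating function $Q_{n,m}(s)=\mathbb{E}(e^{sY_{n,m}})$, and to establish the Gaussian limit by the method of cumulants, in the same spirit as the other limit laws in this paper (convergence of the centred and scaled moment generating function, followed by Curtiss's theorem). Write $\rho_0:=pq^{n-m}$ and $\rho_j:=pq^{n-j}$ for $1\le j<m$, and $\theta_j(s):=(1-e^{-s})/\rho_j$. Taking logarithms in \eqref{Pnms-fp} gives
\[
    \log Q_{n,m}(s)=-\log\bigl(1-\theta_0(s)\bigr)
    +\sum_{1\le j<m}\Bigl(\log\bigl(1-\tfrac12\theta_j(s)\bigr)
    -\log\bigl(1-\theta_j(s)\bigr)\Bigr),
\]
so that, for each fixed $k\ge1$, the $k$-th cumulant $\kappa_k:=\kappa_k(Y_{n,m})=k!\,[s^k]\log Q_{n,m}(s)$ is a finite linear combination, with coefficients depending only on $k$, of the quantities $\rho_j^{-r}$ ($0\le j<m$, $1\le r\le k$); since $[s^k](1-e^{-s})^k=1$ and $\rho_j\asymp n^{-1}$, the dominant part is
\[
    \kappa_k=(k-1)!\Bigl(\rho_0^{-k}
    +\bigl(1-2^{-k}\bigr)\sum_{1\le j<m}\rho_j^{-k}\Bigr)
    +O\bigl(m\,n^{k-1}\bigr).
\]

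First I would convert these exact formulas into uniform asymptotics. Using $p\asymp n^{-1}$ — say $p=c/n$, so $q^{n-j}=\exp\!\bigl(-c(1-j/n)\bigr)\bigl(1+O(n^{-1})\bigr)$ uniformly for $0\le j\le n$ — comparison with a Riemann integral gives
\[
    \sum_{0\le j<m}\rho_j^{-k}
    =\frac{e^{kc}}{kc}\bigl(1-e^{-kc\alpha}\bigr)\,n^{k+1}
    \bigl(1+o(1)\bigr)\qquad(\alpha=m/n),
\]
uniformly for $1\le m\le n$. Taking $k=2$ recovers $\varsigma_{n,m}^2=\kappa_2\asymp n^3\bigl(1-e^{-2c\alpha}\bigr)$, consistently with \eqref{LO-var}, and for $k\ge3$ the standardized cumulants obey the crucial uniform bound
\[
    \frac{\kappa_k(Y_{n,m})}{\varsigma_{n,m}^{\,k}}
    =O\!\left(n^{1-k/2}\,
    \frac{1-e^{-kc\alpha}}{\bigl(1-e^{-2c\alpha}\bigr)^{k/2}}\right)
    =O\!\left(m^{1-k/2}\right)\qquad(k\ge3),
\]
uniformly for $m\le n$, which tends to $0$ exactly because $m\to\infty$ (for $m=O(1)$ it remains bounded away from $0$, matching the non-Gaussian limit of Theorem~\ref{thm:XO1}). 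Consequently, with $W_{n,m}:=(Y_{n,m}-\nu_{n,m})/\varsigma_{n,m}$ and $\kappa_1=\nu_{n,m}$, $\kappa_2=\varsigma_{n,m}^2$, the cumulant generating function of $W_{n,m}$ is
\[
    \log\mathbb{E}\bigl(e^{sW_{n,m}}\bigr)
    =\tfrac12 s^2+\sum_{k\ge3}
    \frac{\kappa_k(Y_{n,m})}{k!\,\varsigma_{n,m}^{\,k}}\,s^k
    \;\longrightarrow\;\tfrac12 s^2,
\]
uniformly for $s$ in a fixed complex neighbourhood of $0$ (the tail series converges there because $s/\varsigma_{n,m}$ stays inside the domain of analyticity of $\log Q_{n,m}$ and is dominated termwise by the bound above), whence Curtiss's theorem gives $W_{n,m}\xrightarrow{(d)}\mathscr{N}(0,1)$, uniformly in the stated range. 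As an alternative one could invoke Lyapunov's condition with exponent $3$, which is immediate here since the third absolute central moment of $\mathrm{Geo}(\rho_j)$ is $\asymp\rho_j^{-3}$.

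The main difficulty is the \emph{uniformity} of the cumulant estimates over the entire range $1\le m\le n$, and in particular treating the two regimes $\alpha=m/n\to0$ and $\alpha$ bounded away from $0$ in one stroke: one must check that the post-normalization factor $\bigl(1-e^{-kc\alpha}\bigr)/\bigl(1-e^{-2c\alpha}\bigr)^{k/2}$ is $O(m^{1-k/2})$ for every $\alpha\in(0,1]$ — its worst behaviour occurring near $\alpha=0$, where it is $\asymp(c\alpha)^{1-k/2}=O\bigl((m/n)^{1-k/2}\bigr)$ — and that the relative $O(n^{-1})$ errors in $q^{n-j}=e^{-c(1-j/n)}(1+O(n^{-1}))$ do not accumulate when summed over $j$ (so that the $O(mn^{k-1})$ remainder in $\kappa_k$ is genuinely of smaller order than the main term $\asymp n^{k+1}(1-e^{-kc\alpha})$). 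These are routine book-keeping tasks; the convergence of the cumulant series and the appeal to Curtiss's theorem are then standard.
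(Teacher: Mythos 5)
Your proposal is correct and takes essentially the same route as the paper: both arguments rest on the independence decomposition \eqref{X-Z} (equivalently the product form \eqref{Pnms-fp}) and conclude by verifying a standard CLT criterion uniformly for $m\le n$ --- the paper via explicit third and fourth central moments of the summands together with Lyapunov's condition, you via the $k$-th cumulants of $\log Q_{n,m}$ and Curtiss's theorem, which is the same mechanism. (One small slip: with $p=c/n$ the Riemann-sum constant should be $e^{kc}/(kc^{k+1})$ rather than $e^{kc}/(kc)$, as your own $k=2$ check against \eqref{LO-var} confirms; this does not affect any of the order estimates.)
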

\begin{proof}
Again from the decomposition \eqref{X-Z}, we derive the following
expression for the third central moment
\[
    \kappa_{n,m}
    :=\mathbb{E}\left(Y_{n,m}-\nu_{n,m}\right)^3
    = \frac{7q^3-(8q^3-1)q^{3m}}{4(1-q^3)(pq^n)^3}
    -3\varsigma_{n,m}^2-2\nu_{n,m}.
\]
Similarly, the fourth central moment satisfies
\begin{align*}
    \mathbb{E}\left(Y_{n,m}-\nu_{n,m}\right)^4
    -3\varsigma_{n,m}^4
    &= \frac{3(15q^4-(16q^4-1)q^{4m})}{8(1-q^4)(pq^n)^4}\\
    &\quad -6\mathbb{E}\left(Y_{n,m}-\nu_{n,m}\right)^3
    -11\varsigma_{n,m}^2-6\nu_{n,m},
\end{align*}
which implies that
\[
    \mathbb{E}\left(Y_{n,m}-\nu_{n,m}\right)^4
    =3\varsigma_{n,m}^4(1+o(1)),
\]
uniformly for $1\le m\le n$. We then deduce a central limit theorem
by, say Lyapounov's condition, or by Levy's continuity theorem; see,
for example, \cite{Petrov75}. We can indeed derive an optimal
Berry-Esseen bound by more refined Fourier argument, details being
omitted here.
\end{proof}

In particular, we have
\begin{align}\label{4th-mm}
    \mathbb{E}\left(Y_{n,m}-\nu_{n,m}\right)^4
    -3\varsigma_{n,m}^4 \sim \frac{45(1-e^{-4c\alpha})}
    {32 c^5 e^{-4c}}\, n^5,
\end{align}
when $m\to\infty$ and $m\le n$. This will be needed later.

\paragraph{Random input.} Now consider the cost $Y_n$ used by
Algorithm $(1+1)$-EA when starting from a random input (each bit
being $1$ with probability $\frac12$). Then its moment generating
function satisfies
\[
    \mathbb{E}\left(e^{Y_ns}\right)
    := 2^{-n} + \sum_{1\le m\le n}2^{m-n-1} Q_{n,m}(s).
\]
\begin{thm} The random variables $Y_n$ are asymptotically
normally distributed
\[
    \mathbb{P}\left(\frac{Y_n-\nu_n}
    {\varsigma_n}\le x\right)
    \to \Phi(x),
\]
with mean $\nu_n$ and variance $\varsigma_n$ asymptotic to
\begin{align}\label{nu-sg}
\begin{split}
    \nu_n &= \frac{e^c-1}{2c^2}\,n^2
    + \frac{(c-2)e^c+2}{4c}\,n+O(1) \\
    \varsigma_n &= \frac{e^{2c}-1}{8c^3}\,n^3 + \frac{3e^{2c}(2c-3)
    -8e^c+17}{16c^2}\,n^2 +O(n),
\end{split}
\end{align}
respectively.
\end{thm}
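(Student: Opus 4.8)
The plan is to reduce the statement about the random-input cost $Y_n$ to the already-established results for the conditional variables $Y_{n,m}$ via the mixture decomposition, following the same blueprint used in Section~\ref{sec:Proof_thm_EYn} for \textsc{OneMax}. Recall that the starting state of a random input has exactly $n-m$ leading ones with probability $2^{m-n-1}$ for $1\le m\le n$ (and the all-ones state with probability $2^{-n}$), so
\[
    \mathbb{E}\left(e^{Y_ns}\right)
    = 2^{-n} + \sum_{1\le m\le n}2^{m-n-1} Q_{n,m}(s).
\]
The geometric weight $2^{m-n-1}$ concentrates almost all the mass on $m$ in a window of width $O(1)$ near $m=n$; more precisely, writing $m=n-k$, the contribution of $k\ge K$ is $O(2^{-K})$, so up to exponentially small errors only $k=O(\log n)$ matters. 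In that regime $\alpha=\frac mn=1-\frac kn\to1$, and Theorem~\ref{thm:X-all} gives a normal limit for each $Y_{n,m}$; the exact formulas \eqref{mu-exact}, \eqref{V-exact} together with Corollary statements \eqref{LO-mu}, \eqref{LO-var} pin down $\nu_{n,m}$ and $\varsigma_{n,m}^2$ with enough precision. Substituting $\alpha=1-\frac kn$ (equivalently $m=n-k$) into \eqref{LO-mu} and \eqref{LO-var} and expanding in powers of $n^{-1}$ will produce
\[
    \nu_{n,n-k} = \frac{e^c-1}{2c^2}\,n^2 + \frac{(c-2)e^c+2}{4c}\,n + O(1),
\]
uniformly for $k=O(\log n)$, with the $k$-dependence pushed into the $O(1)$ term; similarly for $\varsigma_{n,n-k}^2$. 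This yields the claimed asymptotics for $\nu_n$ and $\varsigma_n$ once one checks that averaging over the geometric weights only perturbs these expansions by $O(1)$ (resp.\ $O(n)$), which is immediate since each correction is summed against the convergent series $\sum_k 2^{-k}$.

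For the limit law itself I would work with the normalized moment generating function $\mathbb{E}\big(e^{(Y_n-\nu_n)s/\varsigma_n}\big)$ and split the sum at $m=n-K\log n$ for a suitably large constant $K$. The tail $m\le n-K\log n$ contributes at most $O(n^{-KC})$ to the mgf for $s$ in a fixed complex neighbourhood of the origin, using the crude bound $|Q_{n,m}(s)|\le \mathbb{E}(e^{Y_{n,m}\operatorname{Re}s})$ and the fact that $\mathbb{E}(e^{Y_{n,m}t})$ is finite and at most exponential in $n^2$ for small real $t$, which is killed by the factor $2^{m-n-1}=2^{-k-1}$. On the central window, I would use the expansion \eqref{G-m}-type estimate implicit in the proof of Theorem~\ref{thm:X-all}: each $Y_{n,m}$, suitably centered and scaled by \emph{its own} $\varsigma_{n,m}$, has mgf tending to $e^{s^2/2}$ with a rate controlled by the fourth-moment bound \eqref{4th-mm} (Lyapounov-type). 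Since $\nu_{n,m}$ and $\varsigma_{n,m}$ differ from $\nu_n,\varsigma_n$ only by lower-order amounts uniformly over the window, a change of centering/scaling introduces a factor $e^{(\nu_{n,m}-\nu_n)s/\varsigma_n+\cdots}$ that tends to $1$; summing the resulting estimates against the probabilities $2^{m-n-1}$ (whose total is $1-2^{-n}$) gives $\mathbb{E}\big(e^{(Y_n-\nu_n)s/\varsigma_n}\big)\to e^{s^2/2}$, and Curtiss's / Lévy's continuity theorem finishes the proof.

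The main obstacle I anticipate is the \emph{uniformity} of the normal approximation for $Y_{n,m}$ as $\alpha\to1$, i.e.\ controlling the Berry--Esseen-type error term in Theorem~\ref{thm:X-all} uniformly for $m$ ranging over the window $n-K\log n\le m\le n$ rather than for a single sequence $m=m(n)\to\infty$. This requires revisiting the fourth-moment estimate $\mathbb{E}(Y_{n,m}-\nu_{n,m})^4=3\varsigma_{n,m}^4(1+o(1))$ and checking the $o(1)$ is uniform in this range — which it is, since \eqref{4th-mm} and \eqref{LO-var} show the ratio is $1+O\big((1-e^{-2c\alpha})^{-2} n^{-1}\big)$ and $1-e^{-2c\alpha}$ stays bounded away from $0$ once $\alpha\ge\tfrac12$, say — and then propagating this through Lyapounov's condition for the triangular array of summands $Z_{n,m}^{[j]}$ in the decomposition \eqref{X-Z}. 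A secondary, more bookkeeping-heavy point is verifying that the second-order term in $\nu_n$ quoted in \eqref{nu-sg}, namely $\frac{(c-2)e^c+2}{4c}\,n$, really is the geometric-weighted average of the $n$-coefficients of $\nu_{n,n-k}$; this is a routine but slightly delicate expansion of \eqref{LO-mu} in $k/n$, and I would carry it out by writing $e^{-c\alpha}=e^{-c}e^{ck/n}=e^{-c}(1+ck/n+O((k/n)^2))$ and collecting the $n$-order terms, noting that the $k$-linear part averages to a constant against $\sum_k k\,2^{-k-1}$ and is therefore absorbed into the $O(1)$ remainder.
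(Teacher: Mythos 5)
Your overall architecture --- the mixture $\mathbb{E}(e^{Y_ns})=2^{-n}+\sum_m 2^{m-n-1}Q_{n,m}(s)$, concentration of the geometric weights near $m=n$, a local normal approximation on a central window, and a negligible tail --- is the same as the paper's. But the tail estimate as you describe it is genuinely wrong. You propose to bound the contribution of $m\le n-K\log n$ by $|Q_{n,m}(s)|\le\mathbb{E}\bigl(e^{Y_{n,m}\operatorname{Re}s}\bigr)$ and assert that a factor ``at most exponential in $n^2$'' is killed by $2^{m-n-1}=2^{-k-1}$. It is not: even for $k=n$ the product $2^{-n}e^{Cn^2}$ diverges, and more to the point, at the cut $k=K\log n$ one has $\log\mathbb{E}\bigl(e^{Y_{n,m}\operatorname{Re}(s)/\varsigma_n}\bigr)=\nu_{n,m}\operatorname{Re}(s)/\varsigma_n+O\bigl(\varsigma_{n,m}^2|s|^2/\varsigma_n^2\bigr)\asymp n^2\cdot n^{-3/2}=\sqrt{n}$, so the term is of size $n^{-K\log 2}e^{c'\sqrt{n}}\to\infty$. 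To salvage a real-argument (mgf) version you must keep the centering factor $e^{-\nu_n s/\varsigma_n}$ inside the bound and use $\nu_{n,m}-\nu_n=O(nk)$, which turns each tail term into $2^{-k}e^{O(kn^{-1/2})}$ and makes the tail summable. The paper sidesteps the issue entirely by working with the characteristic function, where $|Q_{n,m}(it/\varsigma_n)|\le1$ holds trivially, so the tail (cut at $n-m>n^{1/3}$ there) is $O\bigl(2^{-n^{1/3}}\bigr)$ with no estimate on $Q_{n,m}$ needed at all.

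On the moment asymptotics your route also falls short of \eqref{nu-sg} as written. The paper does not expand and average: it sums the exact formulas \eqref{mu-exact} and \eqref{snm} in closed form, getting $\nu_n=\frac{q}{2p^2}(q^{-n}-1)$ and $\varsigma_n^2=\frac{3q^2}{4p^3(1+q)}(q^{-2n}-1)-\nu_n$, and expands once at the end. Your plan to read the coefficients off \eqref{LO-mu} and \eqref{LO-var} cannot produce the $n^2$ coefficient of the variance, because \eqref{LO-var} carries an $O(n^2)$ error; you would have to return to \eqref{V-exact} anyway. Moreover your bookkeeping of the $k$-dependence is off in two places: the $k$-dependence of $\nu_{n,n-k}$ is \emph{not} confined to the $O(1)$ term (expanding $e^{-c\alpha}=e^{-c}(1+ck/n+O(k^2/n^2))$ produces a term $-\frac{k}{2c}n$), and after averaging against $\sum_{k\ge0}k\,2^{-k-1}=1$ this contributes $-\frac{n}{2c}$ to the \emph{linear} coefficient --- it is exactly what moves $\frac{(c-2)e^c+4}{4c}$ (the value at $\alpha=1$) to the stated $\frac{(c-2)e^c+2}{4c}$ --- rather than being ``absorbed into the $O(1)$ remainder.'' These computational points are repairable since the exact formulas are at hand, but as written neither the second-order terms in \eqref{nu-sg} nor the tail bound is established.
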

In particular, we also have, by replacing the exact mean and
variance by the corresponding asymptotic approximations
\[
    \mathbb{P}\left(\frac{Y_n-\frac{e^c-1}{2c^2}\,n^2}
    {\sqrt{\frac{e^{2c}-1}{8c^3}\,n^3}}\le x\right)
    \to \Phi(x).
\]
\begin{proof}
By \eqref{mu-exact}, we have
\begin{align*}
    \nu_n = \sum_{1\le m\le n} 2^{-n+m-1}\nu_{n,m}
    = \frac{q}{2p^2}\left(q^{-n}-1\right),
\end{align*}
and then the first estimate in \eqref{nu-sg} follows. Similarly, by
\eqref{snm},
\begin{align*}
    \varsigma_n^2 = \sum_{0\le m\le n}2^{-n+m-1}
    \mathbb{E}(Y_{n,m}^2) - \nu_n^2
    = \frac{3q^2}{4p^3(1+q)}\left(q^{-2n}-1\right)-\nu_n,
\end{align*}
and the second estimate in \eqref{nu-sg} also follows.

For the asymptotic normality, we consider the characteristic
function
\[
    \mathbb{E}\left(e^{\frac{Y_n-\nu_n}{\varsigma_n}it}\right)
    = 2^{-n}+ \sum_{0\le m< n}2^{-n+m-1} Q_{n,m}
    \left(\frac{it}{\varsigma_n}\right)
    e^{-\frac{\nu_n}{\varsigma_n}it}.
\]
We split the sum into two parts: $0\le n-m\le n^{\frac13}$ and $1\le
m<n-n^{\frac13}$. Observe that when $n-m\le n^{\frac13}$, we have
the uniform estimate
\[
    \nu_n - \nu_{n,m} = O(n|n-m+1|)
    \quad \text{and}\quad
    \varsigma_n^2-\varsigma_{n,m}^2 = O\left(n^2|n-m+1|\right),
\]
by \eqref{LO-mu} and \eqref{LO-var}. We then have the local
expansion (see \eqref{4th-mm})
\[
    Q_{n,m}\left(\tfrac{it}{\varsigma_n}\right)
    e^{-\frac{\nu_n}{\varsigma_n}it} = \exp\left(
    \frac{\nu_{n,m}-\nu_n}{\varsigma_n}\,it -
    \frac{\varsigma_{n,m}^2}{2\varsigma_n^2}\,t^2 +O\left(
    \frac{|t|^3}{n^{\frac32}} \right)\right).
\]
Thus
\begin{align*}
    Q_{n,m}\left(\tfrac{it}{\varsigma_n}\right)
    e^{-\frac{\nu_n}{\varsigma_n} it}
    &= \exp\left(-\frac{t^2}2 +
    O\left(\frac{|n-m+1|}{\sqrt{n}}\,|t|
    +\frac{|n-m+1|}{n}\,t^2\right)\right)\\
    &= \exp\left(-\frac{t^2}{2}+
    O\left(n^{-\frac16}|t| + n^{-\frac23}|t|^2\right)\right)\\
    &= e^{-\frac{t^2}2}(1+o(1)),
\end{align*}
uniformly in $m$. Consequently,
\[
    \sum_{n-n^{\frac13}\le m\le n} 2^{-n+m-1}
    Q_{n,m}\left(\tfrac{it}{\varsigma_n}\right)
    e^{-\frac{\nu_n}{\varsigma_n}it}
    = e^{-\frac{t^2}2}(1+o(1)).
\]
The remaining part is negligible since $|Q_{n,m}(e^{it/\sigma})|\le
1$ and
\[
    \sum_{1\le m\le n-n^{\frac13}} 2^{-n+m-1}
    Q_{n,m}\left(\tfrac{it}{\varsigma_n}\right)
    e^{-\frac{\nu_n}{\varsigma_n}it} = O\left(\sum_{m>n^{\frac13}}
    2^{-m}\right)=O\left(2^{-n^{\frac13}}\right).
\]
We conclude that
\begin{align*}
    \mathbb{E}\left(e^{\frac{Y_n-\nu_n}{\varsigma_n}it}\right)
    \to e^{-\frac{t^2}2},
\end{align*}
which implies the convergence in distribution of $\frac{Y_n-\nu_n}
{\varsigma_n}$ to the standard normal distribution.
\end{proof}

\section*{Appendix. A. Some properties of $S_r(z)$.}
We collected here some interesting expressions for $S_r(z)$.

We begin with proving that all $S_r$ can be expressed in terms of
$S_0$ and the two modified Bessel functions
\[
\begin{split}
    \thickbar{I}_0(\alpha)
    &:= I_0\left(2\sqrt{\alpha(1-\alpha)}\right)
    = \sum_{\ell\ge0}\frac{\alpha^\ell(1-\alpha)^\ell}
    {\ell!\ell!},\\
    \thickbar{I}_1(\alpha) &:= \sqrt{\frac{\alpha}{1-\alpha}}
    I_1\left(2\sqrt{\alpha(1-\alpha)}\right)
    = \sum_{\ell\ge1}\frac{\alpha^\ell(1-\alpha)^{\ell-1}}
    {\ell!(\ell-1)!}.
\end{split}
\]
The starting point is the obvious relation ($E_r(z) :=
\sum_{\ell\ge1}\ell^r z^{\ell-1}$)
\[
    E_r(z) = zE_{r-1}'(z) + E_{r-1}(z)
    \qquad(r\ge1).
\]
Applying the integral representation \eqref{Sr-int-rep} and
integration by parts, we have
\[
    S_r(\alpha) = \frac1{2\pi i}\oint_{|z|=c}
    \left(\frac{\alpha}{z}-(1-\alpha)z\right)E_{r-1}(z)
    e^{\frac{\alpha}z+(1-\alpha)z} \dd z.
\]
By the same argument used for Corollary~\ref{cor:Ur}, we deduce the
recurrence
\begin{align} \label{Sr-alpha}
    S_r(\alpha) = \alpha\thickbar{I}_0(\alpha)
    + \sum_{0\le j<r}\binom{r-1}{j}S_j(\alpha)
    \left(\alpha + (-1)^{r-j}(1-\alpha)\right),
\end{align}
for $r\ge2$ with
\[
    S_1(\alpha)
    =(2\alpha-1)S_0(\alpha) + \alpha\thickbar{I}_0(\alpha)
    + (1-\alpha)\thickbar{I}_1(\alpha) .
\]
A closed-form expression can be obtained for the recurrence
\eqref{Sr-alpha} but it is very messy. More precisely, let $f(z) :=
\sum_{r\ge0} S_r(\alpha)z^r/r!$. Then $f$ satisfies the first-order
differential equation
\[
    f'(z) = \left(\alpha e^z-(1-\alpha)e^{-z}\right) f(z)
    + \alpha\thickbar{I}_0(\alpha)
    + (1-\alpha)\thickbar{I}_1(\alpha).
\]
The solution to the differential equation with the initial condition
$f(0)=S_0(\alpha)$ is given by
\[
\begin{split}
    f(z) &= S_0(\alpha)e^{\alpha(e^z-e^{-z})+e^{-1}-1}\\
    &\qquad +e^{\alpha e^z+(1-\alpha)e^{-z}}
    \int_0^z \left(\alpha\thickbar{I}_0(\alpha)
    e^u+(1-\alpha)\thickbar{I}_1(\alpha)\right)
    e^{-\alpha e^u-(1-\alpha)e^{-u}} \dd u.
\end{split}
\]
This implies that $S_r(\alpha)$ has the general form
\[
    S_r(\alpha) = p_r^{[0]}(\alpha) \thickbar{I}_0(\alpha)
    + p_r^{[1]}(\alpha)
    \thickbar{I}_1(\alpha)+p_r^{[2]}(\alpha) S_0(\alpha)
    \qquad(r\ge1),
\]
where the $p_r^{[i]}$ are polynomials of $\alpha$ of degree $r$.
Closed-form expressions can be derived but are less simpler than the
recurrence \eqref{Sr-alpha} for small values of $r$.

On the other hand, the same argument also leads to
\[
    S_r'(\alpha) = \thickbar{I}_0(\alpha)
    + \sum_{0\le j<r} \binom{r}{j}S_j(\alpha)
    \left(1-(-1)^{r-j}\right)\qquad(r\ge1).
\]
In particular, $S_1'(\alpha) = \thickbar{I}_0(\alpha)
+2S_0(\alpha)$. Note that
\[
    S_0'(\alpha) = \thickbar{I}_0(\alpha)
    + \thickbar{I}_1(\alpha),
\]
implying that
\[
    S_0(\alpha) = \int_0^\alpha
    \left(\thickbar{I}_0(u)
    + \thickbar{I}_1(u)\right) \dd u.
\]
This in turn gives
\[
    S_1(\alpha) = \int_0^\alpha
    \left((1+2(\alpha-u))\thickbar{I}_0(u)
    + 2(\alpha-u)
    \thickbar{I}_1(u)\right)\dd u.
\]
This expression can be further simplified by taking second
derivative with respect to $\alpha$ of the integral representation
\[
    S_1(\alpha) = \frac1{2\pi i}\oint_{|z|=c}
    \frac{e^{\frac \alpha z+(1-\alpha)z}}{(1-z)^2} \dd z,
\]
giving
\[
    S_1''(\alpha) = 2\thickbar{I}_0(\alpha)
    + \alpha^{-1}\thickbar{I}_1(\alpha),
\]
which implies that (with $S_1(0)=0, S_1'(0)=1$)
\[
    S_1(\alpha) = \int_0^\alpha (\alpha-u)
    \left(2\thickbar{I}_0(u)
    +u^{-1}\thickbar{I}_1(u) \right) \dd u
\]
Similarly, since $S_2' = I_0 + 4S_1$, we have
\begin{align*}
    S_2(\alpha) &= \int_0^\alpha
    (1+4(\alpha-u)(\alpha-u+1))\thickbar{I}_0(u)\dd u
    + 4\int_0^\alpha (\alpha-u)^2
    \thickbar{I}_1(u)\dd u.
\end{align*}
These expressions show not only the intimate connections of $S_r$ to
Bessel functions but also their rich algebraic aspects.

We now consider $S_r(1-\alpha)$. By the same integral representation
and a change of variables, we see that, for $r\ge1$,
\[
    (-1)^rS_r(\alpha) +S_r(1-\alpha)
    = [z^0] E_r(1-z) e^{\frac{\alpha}{1-z}
    +(1-\alpha)(1-z)}.
\]
Now
\[
    E_r(1-z) = r![w^r] \frac{e^w}{1-(1-z)e^w}
    = \sum_{0\le j\le r} (-1)^{r+j}j!\,
    \text{Stirling}_2(r,j) z^{-j-1}.
\]
Thus we deduce the identity (for $r\ge1$)
\begin{align*}
    &(-1)^rS_r(\alpha) +S_r(1-\alpha)\\
    &\qquad = e\sum_{0\le \ell \le r}(-1)^{r+\ell}\ell!\,
    \text{Stirling}_2(r,\ell) 
    \sum_{\substack{0\le h\le \ell\\
    0\le j<h/2}}\binom{h-j-1}{j-1}
    \frac{(2\alpha-1)^{\ell-h}\alpha^j}
    {(\ell-h)!j!}
\end{align*}
or
\begin{align*}
    &(-1)^rS_r(\alpha) +S_r(1-\alpha)\\
    &\qquad = e\sum_{0\le \ell \le r}(-1)^{r+\ell}\ell!\,
    \text{Stirling}_2(r,\ell) 
    \left(\frac{(\alpha-1)^{\ell}}
    {\ell!}+ \sum_{\substack{0\le h\le \ell\\
    0\le j<h}}\binom{h-1}{j}\frac{\alpha^{h-j}(\alpha-1)^{\ell-h}}
    {(\ell-h)!(h-j)!}\right).
\end{align*}
Note that for $r=0$
\[
    S_0(\alpha)+S_0(1-\alpha)
    = e-\thickbar{I}_0(\alpha).
\]
In particular, this gives $S(\frac12)=\frac12(e-I_0(1)) \approx
0.726107$. For $r\ge1$
\begin{align*}
    S_1(\alpha)-S_1(1-\alpha) &= e(2\alpha-1)\\
    S_2(\alpha)+S_2(1-\alpha) &= e(4\alpha^2-4\alpha+2)\\
    S_3(\alpha)-S_3(1-\alpha) &= e(8\alpha^3-12\alpha^2+14\alpha-5)\\
    S_4(\alpha)-S_4(1-\alpha) &= e(16\alpha^4-32\alpha^3
    +64\alpha^2-48\alpha+15).
\end{align*}

\section*{Appendix. B. Closeness of the approximation 
\eqref{mu-asymp} for $\mu_{n,m}^*$: graphical representations}

The successive improvements attained by adding more terms 
on the right-hand side of \eqref{mu-asymp} can be viewed  
in Figures~\ref{App:fig-mu} and \ref{App:fig-mu2}. 

\begin{figure}[!ht]
\begin{center}
\includegraphics[width=6cm]{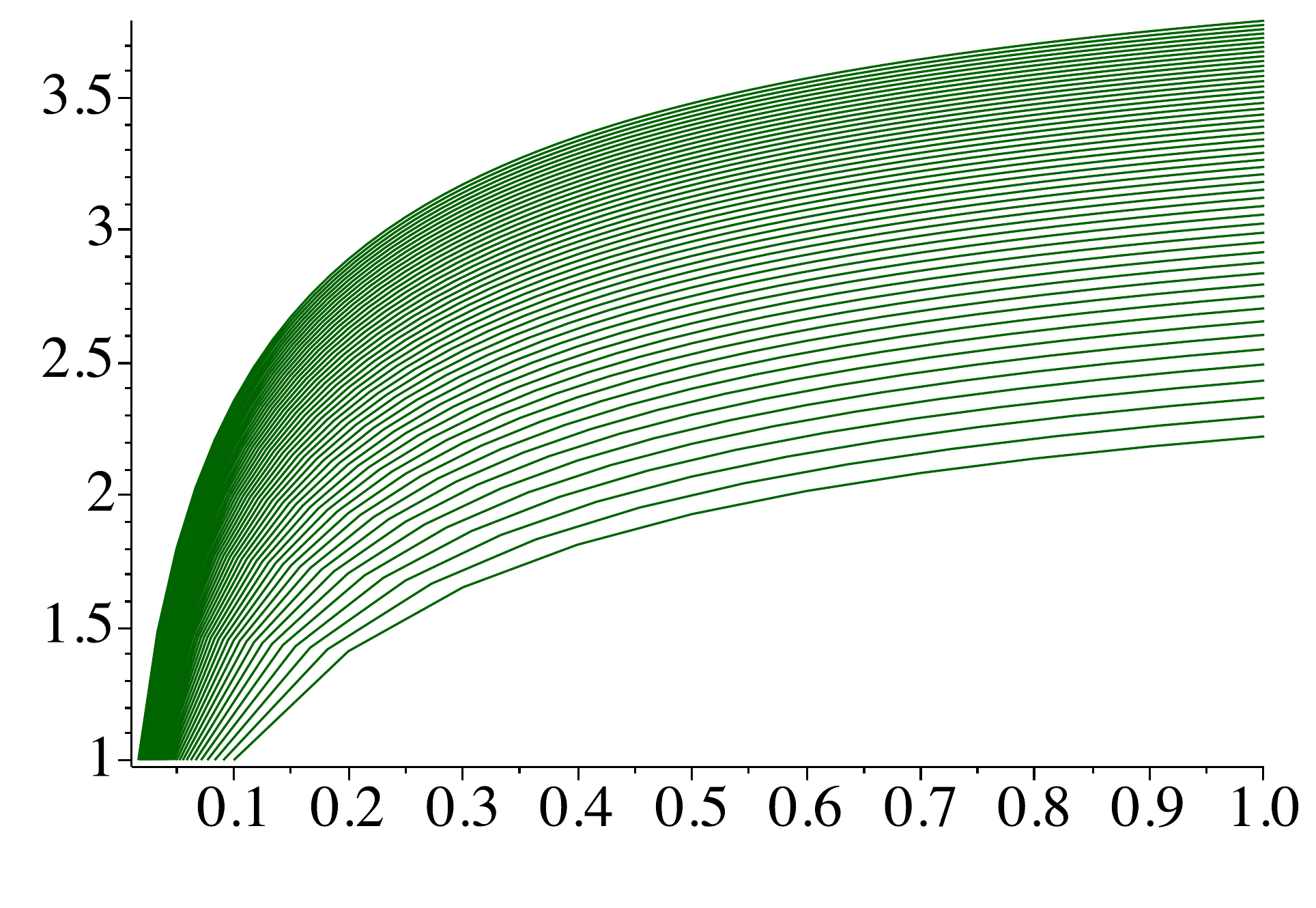}\quad
\includegraphics[width=6cm]{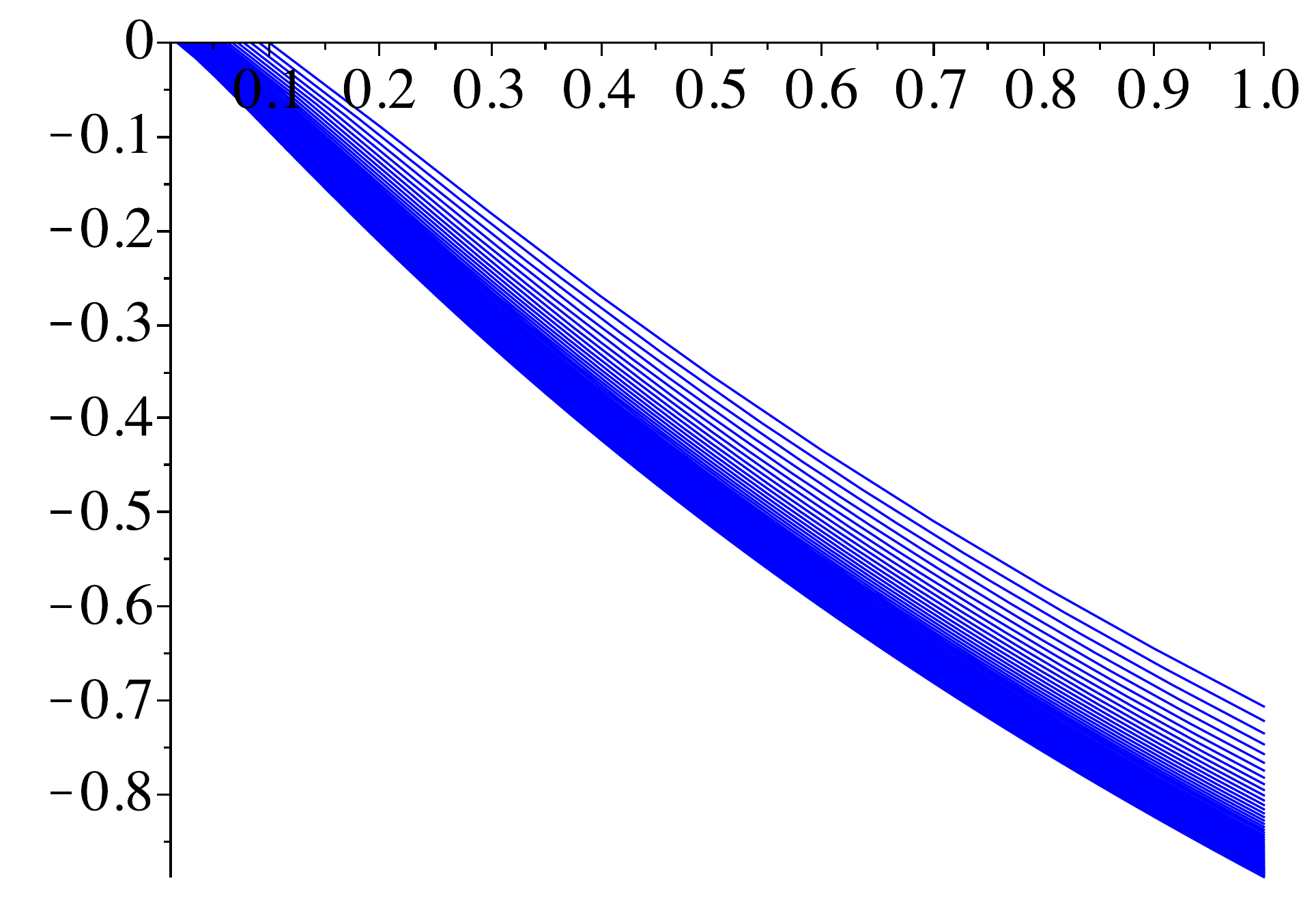}
\end{center}
\vspace*{-.6cm} \caption{\emph{Left: the sequence 
$\mu_{n,m}^*$ for $1\le m\le n$ and $n=10,\dots,60$; 
Right: the difference between $\mu_{n,m}^*-H_m$ 
for $n,m$ in the same ranges.}}\label{App:fig-mu}
\end{figure}

\begin{figure}[!ht]
\begin{center}
\includegraphics[width=6cm]{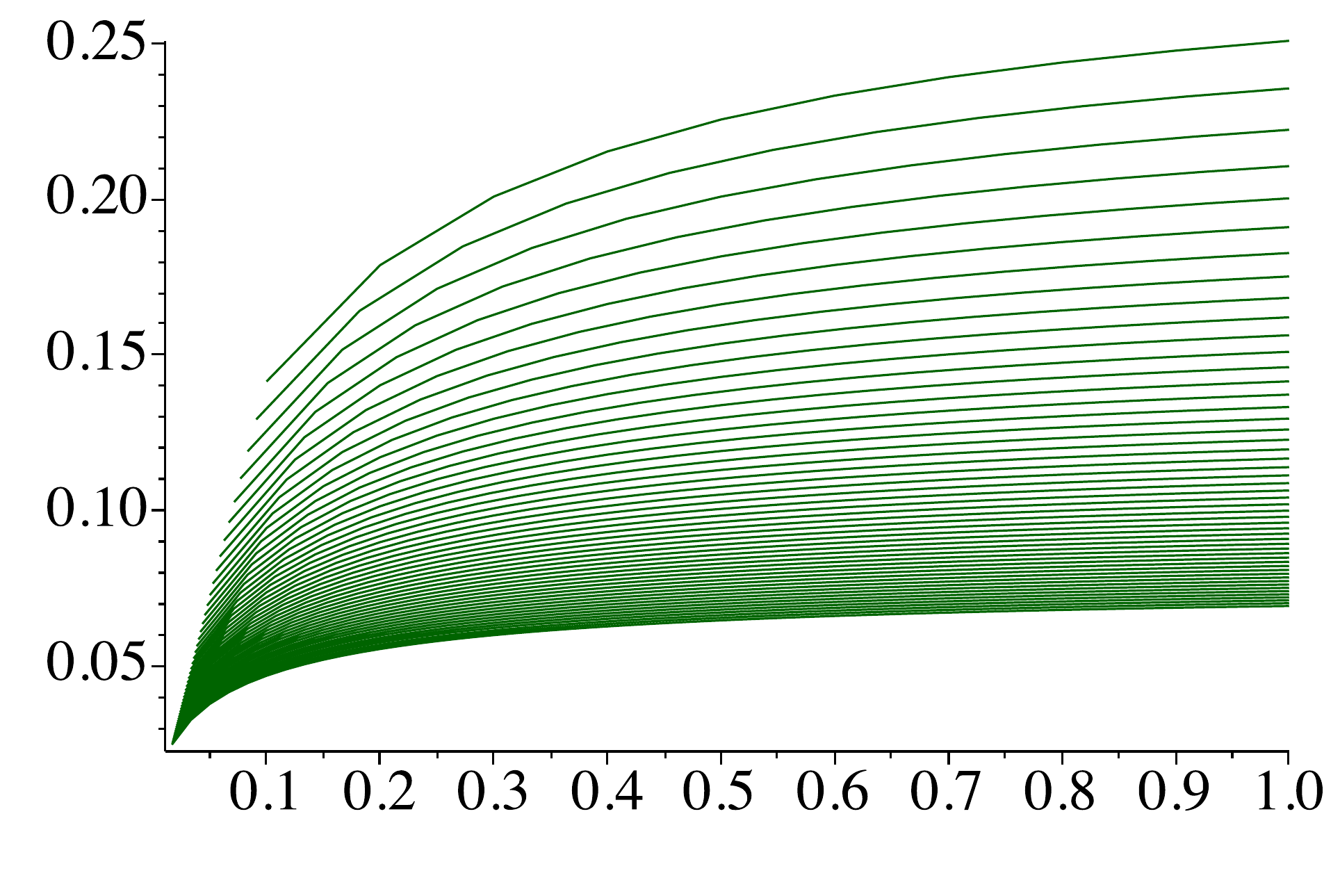}\quad
\includegraphics[width=6cm]{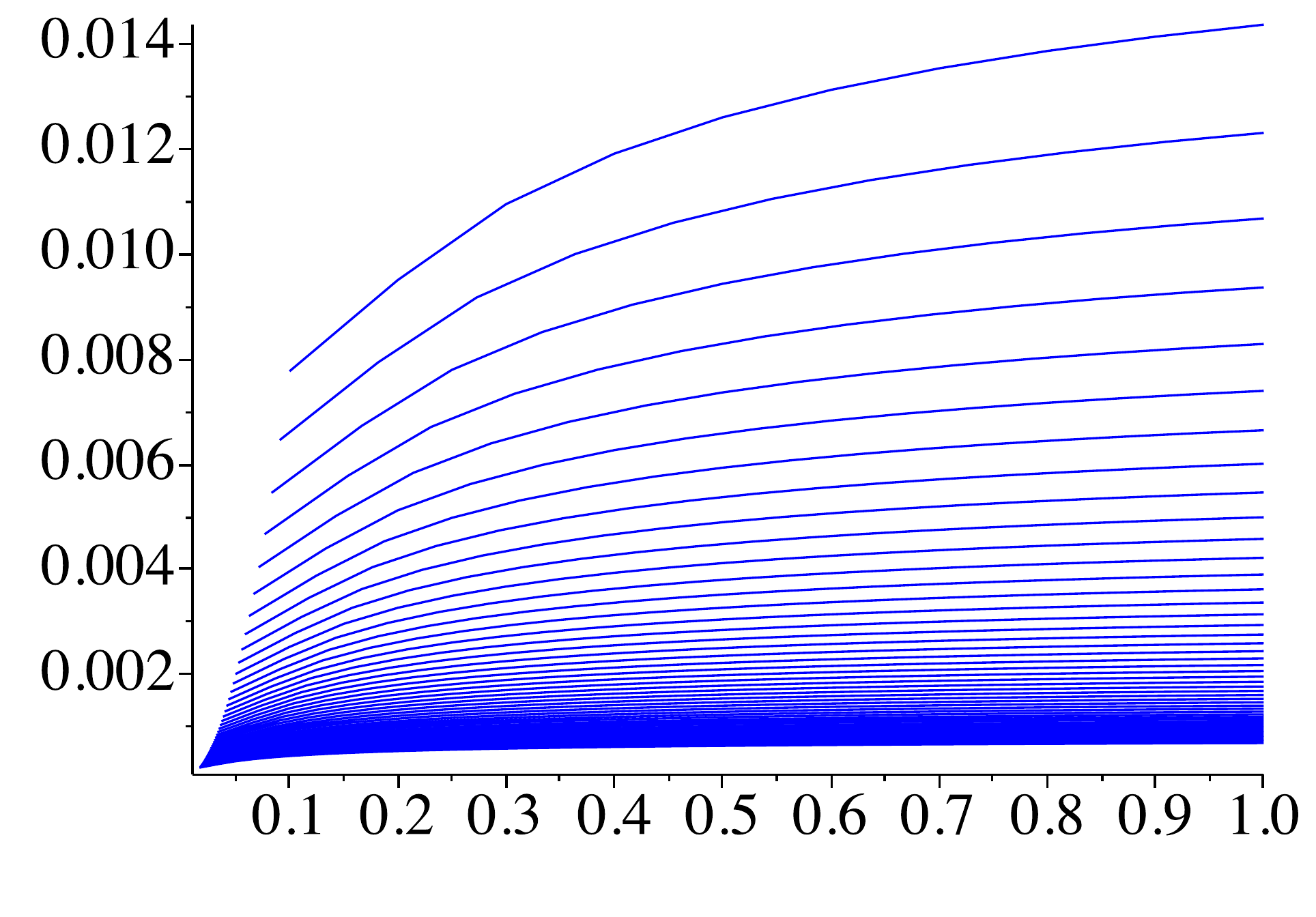}
\end{center}
\vspace*{-.6cm} \caption{\emph{The difference 
$\mu_{n,m}^*-(H_m+\phi_1(\frac mn))$ (left) and
$\mu_{n,m}^* -\bigl(H_m + \phi_1(\frac mn) +
\frac{H_m + \phi_2(\frac mn)}{n}\bigr)$ (right)
for $1\le m\le n$ and $n=10,\dots,60$.}}\label{App:fig-mu2}
\end{figure}

\section*{Appendix. C. Asymptotic expansions for $V_{n,m}^*$
for small $m$ and the refined approximation \eqref{Vnm-star} to
$V_{n,m}^*$}

Recall that
\[
    V_{n,m}^* = \frac{e_{n}^2}{n^2}
    \big(\mathbb{V}(X_{n+1,m})+\mathbb{E}(X_{n+1,m})\big).
\]
This sequence satisfies $V_{n,0}^*=0$, and for $1 \le m \le n$,
\begin{equation}\label{App:Vnm_rec}
    \sum_{1\le \ell \le m} \lambda_{n,m,\ell}^*
    \left(V_{n,m}^*-V_{n,m-\ell}^*\right) = T_{n,m}^*,
\end{equation}
where
\begin{equation*}
    T_{n,m}^* := \sum_{1\le \ell \le m} \lambda^*_{n,m,\ell}
    \left(\mu_{n,m}^*-\mu_{n,m-\ell}^*\right)^2.
\end{equation*}
From this recurrence, we obtain the following expansions.
\begin{align*}
    V_{n,1}^* &= 1,\\
    V_{n,2}^* & = \tfrac{5}{4} - \tfrac{1}{2} n^{-1}
    + \tfrac{3}{4} n^{-2} - \tfrac{5}{4} n^{-3}
    + \tfrac{31}{16} n^{-4} - 3 n^{-5} + O(n^{-6}),\\
    V_{n,3}^* & = \tfrac{49}{36} - \tfrac{17}{18} n^{-1}
    + \tfrac{52}{27} n^{-2} - \tfrac{139}{36} n^{-3}
    + \tfrac{3157}{432} n^{-4} - \tfrac{361}{27} n^{-5}
    + O(n^{-6}),\\
    V_{n,4}^* & = \tfrac{205}{144} - \tfrac{95}{72} n^{-1}
    + \tfrac{1489}{432} n^{-2} - \tfrac{1243}{144} n^{-3}
    + \tfrac{33091}{1728} n^{-4} - \tfrac{28979}{864} n^{-5}
    + O(n^{-6}).
\end{align*}
Observe that the leading constant terms are exactly given by
\[
    \left\{H_m^{(2)}\right\}
    = \left\{1,\tfrac54,\tfrac{49}{36},\tfrac{205}{144},
    \tfrac {5269}{3600},\tfrac{5369}{3600},\dots\right\}.
\]
These expansions suggest the general form
\begin{equation*}
    V_{n,m}^* \approx H_{m}^{(2)} + \sum_{k \ge 1}
    \frac{\tilde{d}_{k}(m)}{n^{k}}.
\end{equation*}
With this form using the technique of matched asymptotics, we are
then led to the following explicit expressions.
\begin{small}
\begin{align*}
    \tilde{d}_{1}(m) & = -2H_{m} + 2 H_{m}^{(2)},
    \quad \text{for $m \ge 0$},\\
    \tilde{d}_{2}(m) & = -\tfrac{11}{2} H_{m}
    + \tfrac{7}{3} H_{m}^{(2)} +\tfrac{7}{12} +\tfrac{11}{4}m,
    \quad \text{for $m \ge 2$},\\
    \tilde{d}_{3}(m) & = -\tfrac{73}{9} H_{m}
    +\tfrac{7}{3} H_{m}^{(2)}
    +\tfrac{1}{6} + \tfrac{239}{36} m
    - \tfrac{49}{36} m^{2}, \quad \text{for $m \ge 2$},\\
    \tilde{d}_{4}(m) & = -\tfrac{1349}{144} H_{m}
    + 2 H_{m}^{(2)} +\tfrac{197}{144}
    +\tfrac{14135}{1728}m -\tfrac{6283}{2880}m^{2}
    +\tfrac{2473}{4320}m^{3}, \quad \text{for $m \ge 4$}.
\end{align*}
\end{small}
The above expansions for small $m$ suggest the more uniform
asymptotic expansion for $V_{n,m}^*$ for $1\le m\le n$
\begin{equation}\label{App:Vnm_expansion}
    V_{n,m}^* \sim H_{m}^{(2)}
    + \sum_{k \ge 1} \frac{a_{k} H_{m}
    + \psi_{k}(\alpha) + c_{k}H_{m}^{(2)}}{n^{k}},
\end{equation}
in the sense that when omitting all terms with indices $k > K$
introduces an error of order $n^{-(K+1)} H_{m}$; furthermore, the
expansion holds uniformly for $K \le m \le n$. We elaborate this
approach by carrying out the required calculations up to $k=2$,
which then characterizes particularly the constant $a_{2}$ and the
function $\psi_{2}(z)$.

We start with the formal expansion \eqref{App:Vnm_expansion} and
expand in recurrence \eqref{App:Vnm_rec} all terms for large $m =
\alpha n$ in decreasing powers of $n$; we then match the
coefficients of $n^{-(K+1)}$ on both sides for each $K \ge 1$. To
specify the initial condition $\psi_{K}(0)$ we incorporate the
information from the asymptotic expansion for $V_{n,K}^*$ (obtained
by exact solution). With this algorithmic approach it is possible to
determine the coefficients $a_{k}$ and $c_{k}$ and the functions
$\psi_{k}(z)$ successively one after another. We remark that a
formalization of this procedure at the generating function level as
carried out for the expectation in Section~\ref{sec:mu-ae} could be
given also, but here we do not pursue this any further.

We use the expansions
\begin{align*}
    & \phi\Big(\frac{m}{n}\Big) - \phi\Big(\frac{m-\ell}{n}\Big)
    = \phi'(\alpha) \frac{\ell}{n} - \phi''(\alpha)
    \frac{\ell^{2}}{2n^{2}} + \phi'''(\alpha)
    \frac{\ell^{3}}{6n^{3}} + \cdots,\\
    & H_{m} - H_{m-\ell} = \frac{\ell}{\alpha \, n}
    + \frac{\ell(\ell-1)}{2 \alpha^{2} \, n^{2}}
    + \frac{\ell (\ell-1) (2\ell-1)}{6 \alpha^{3} \, n^{3}} 
    + \cdots,\\
    & H_{m}^{(2)} - H_{m-\ell}^{(2)}
    = \frac{\ell}{\alpha^{2} \, n^{2}}
    + \frac{\ell(\ell-1)}{\alpha^{3} \, n^{3}}
    + \frac{\ell (\ell-1) (2\ell-1)}{2 \alpha^{4} \, n^{4}} + \cdots
\end{align*}
as well as those for $\mu_{n,m}^*$ and
$\thickbar{\Lambda}_{n,m}^{(r)}$ in \eqref{mu-n-star-ae} and
\eqref{Lmbd}, respectively. The expansion of the right-hand side of
\eqref{App:Vnm_rec} then starts as follows.
\begin{equation*}
    T_{n,m}^* = \frac{T_{1}(\alpha)}{n^{2}}
    + \frac{T_{2}(\alpha)}{n^{3}} + \cdots,
\end{equation*}
where
\begin{align*}
    T_{1}(z) & =\frac{S_{2}(z)}{S_{1}^{2}(z)},\\
    T_{2}(z) & = -\frac{S_{2}^{2}(z) S_{1}'(z)}{S_{1}^{4}(z)}
    + \frac{S_{3}(z) S_{1}'(z)}{S_{1}^{3}(z)}
    + \frac{2 S_{0}(z) S_{2}(z)}{S_{1}^{3}(z)}
    + \frac{S_{0}(z)}{S_{1}^{2}(z)}
    - \frac{S_{2}(z)}{2 S_{1}^{2}(z)} - \frac{2}{S_{1}(z)}.
\end{align*}
For the left-hand side of \eqref{App:Vnm_rec}, the asymptotic form
\eqref{App:Vnm_expansion} leads to
\begin{equation*}
    \sum_{1 \le \ell \le m} \lambda_{n,m,\ell}^*
    \left(V_{n,m}^* - V_{n,m-\ell}^*\right)
    = \frac{V_{1}(\alpha)}{n^{2}} + \frac{V_{2}(\alpha)}{n^{3}}
    + \cdots,
\end{equation*}
where
\begin{align*}
    V_{1}(z) & = \left(\frac{1}{z^{2}}+\frac{a_{1}}{z}
    + \psi_{1}'(z)\right) S_{1}(z),\\
    V_{2}(z) & = \left(-\frac{1}{z^{2}} -\frac{a_{1}}{z}
    - \psi_{1}'(z)\right) S_{0}(z)\\
    & \quad \mbox{} + \left(-\frac{1}{z^{3}}-\frac{1}{2z^{2}}
    +\frac{c_{1}}{z^{2}}-\frac{a_{1}}{2z^{2}}-\frac{a_{1}}{2z}
    +\frac{a_{2}}{z}-\frac{\psi_{1}'(z)}{2}
    +\psi_{2}'(z)\right) S_{1}(z)\\
    & \quad \mbox{} + \left(\frac{1}{z^{3}}
    +\frac{a_{1}}{2z^{2}}-\frac{\psi_{1}''(z)}{2}\right) S_{2}(z).
\end{align*}
Observe that all functions $V_{k}(z)$, $T_{k}(z)$ have a simple pole
at $z=0$.

We match the terms in the expansion and consider $V_{1}(z) =
T_{1}(z)$. First we compare the first two terms of the Laurent
expansions of both functions. Using \eqref{eqn:SrTr_expansion}, we
get
\begin{align*}
    V_{1}(z) & = \frac{1}{z} + \left(\frac{3}{2}+a_{1}\right)   
    + O(z),\\
    T_{1}(z) & = \frac{1}{z} - \frac{1}{2} + O(z),
\end{align*}
and by matching the two constant terms, we see that $a_{1} = -2$.
The equation $V_{1}(z) = T_{1}(z)$ characterizes then the function
$\psi_{1}'(z)$ of the form
\begin{equation*}
    \psi_{1}'(z) = \frac{S_{2}(z)}{S_{1}^3(z)}
    - \frac{1}{z^{2}} + \frac{2}{z}.
\end{equation*}

Next we consider $V_{2}(z) = T_{2}(z)$ and obtain
\begin{align*}
    V_{2}(z) & = \left(-\frac{1}{2}+c_{1}\right) \frac{1}{z}
    + \left(-\frac{5}{12} - a_{1} + \frac{3 c_{1}}{2}
    + a_{2}\right) + O(z),\\
    T_{2}(z) & = \frac{3}{2 z} - \frac{11}{12}+ O(z),
\end{align*}
and thus, by matching the terms and using the values already
computed in the first-order approximation for $V_{n,m}^*$, $c_{1} =
2$ and $a_{2} = -\frac{11}{2}$. Then the function $\psi_{2}'(z)$ can
be characterized by equating $V_{2}(z) = T_{2}(z)$, which then gives
\begin{align*}
    \psi_{2}'(z) & = -\frac{5 S_{2}^{2}(z) S_{1}'(z)}{2 S_{1}^{5}(z)}
    + \frac{S_{3}(z) S_{1}'(z)}{S_{1}^{4}(z)}
    + \frac{3 S_{2}(z) S_{0}(z)}{S_{1}^{4}(z)}
    + \frac{S_{2}(z) S_{2}'(z)}{2 S_{1}^{4}(z)}
    + \frac{S_{0}(z)}{S_{1}^{3}(z)} - \frac{2}{S_{1}^{2}(z)}\\
    & \quad \mbox{} + \frac{1}{z^{3}} - \frac{3}{z^{2}}
    + \frac{11}{2 z}.
\end{align*}
All constants and functions here match with those obtained earlier
in previous paragraphs, and we can pursue the same calculations
further and obtain finer approximations. For example, we have $c_{2}
= \frac{7}{3}$. But the calculations are long and laborious.

Finally, it remains to determine the constant terms in the Taylor
expansion of the functions $\psi_{k}(z)$ by adjusting them to the
expansion of $V_{n,m}^*$ for small $m$. This yields $\psi_{1}(0) =
0$, and
\begin{equation*}
    \psi_{2}(0) = \frac{7}{12}.
\end{equation*}
This characterizes the function $\psi_{2}(z)$ in
Theorem~\ref{thm:var1} as follows.
\begin{align*}
    \psi_{2}(z) & = \frac{7}{12}
    + \int_{0}^{z}\bigg[ -\frac{5 S_{2}^{2}(t) S_{1}'(t)}
    {2 S_{1}^{5}(t)} + \frac{S_{3}(t) S_{1}'(t)}{S_{1}^{4}(t)}
    + \frac{3 S_{2}(t) S_{0}(t)}{S_{1}^{4}(t)}
    + \frac{S_{2}(t) S_{2}'(t)}{2 S_{1}^{4}(t)}
    + \frac{S_{0}(t)}{S_{1}^{3}(t)}\\
    & \qquad \qquad \qquad \mbox{} - \frac{2}{S_{1}^{2}(t)}
    + \frac{1}{t^{3}} - \frac{3}{t^{2}}
    + \frac{11}{2 t} \bigg] \dd t.
\end{align*}
In particular, the first few terms in the Taylor expansion of
$\psi_{2}(z)$ are given by
\begin{align*}
    \psi_{2}(z) & = \tfrac{7}{12} + \tfrac{239}{36} z
    - \tfrac{6283}{2880} z^{2} - \tfrac{4529}{3600} z^{3}
    + \tfrac{9283591}{1814400} z^{4}
    - \tfrac{137478949}{14112000} z^{5} + \cdots.
\end{align*}

\section*{Appendix. D. Closeness of the approximation 
\eqref{Vnm-star} for $V_{n,m}^*$: graphical representations}

\begin{figure}[!ht]
\begin{center}
\includegraphics[width=6cm]{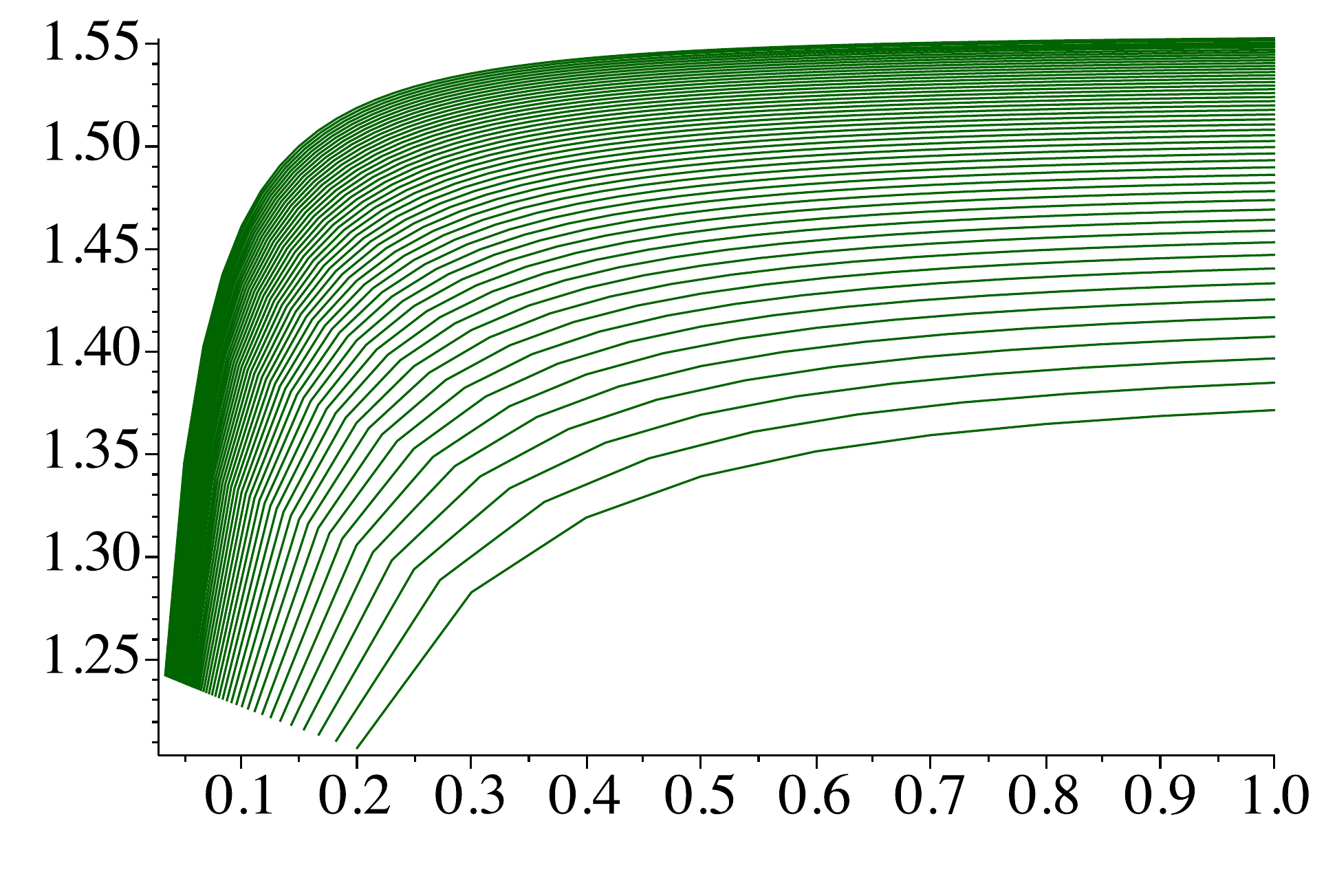}\quad
\includegraphics[width=6cm]{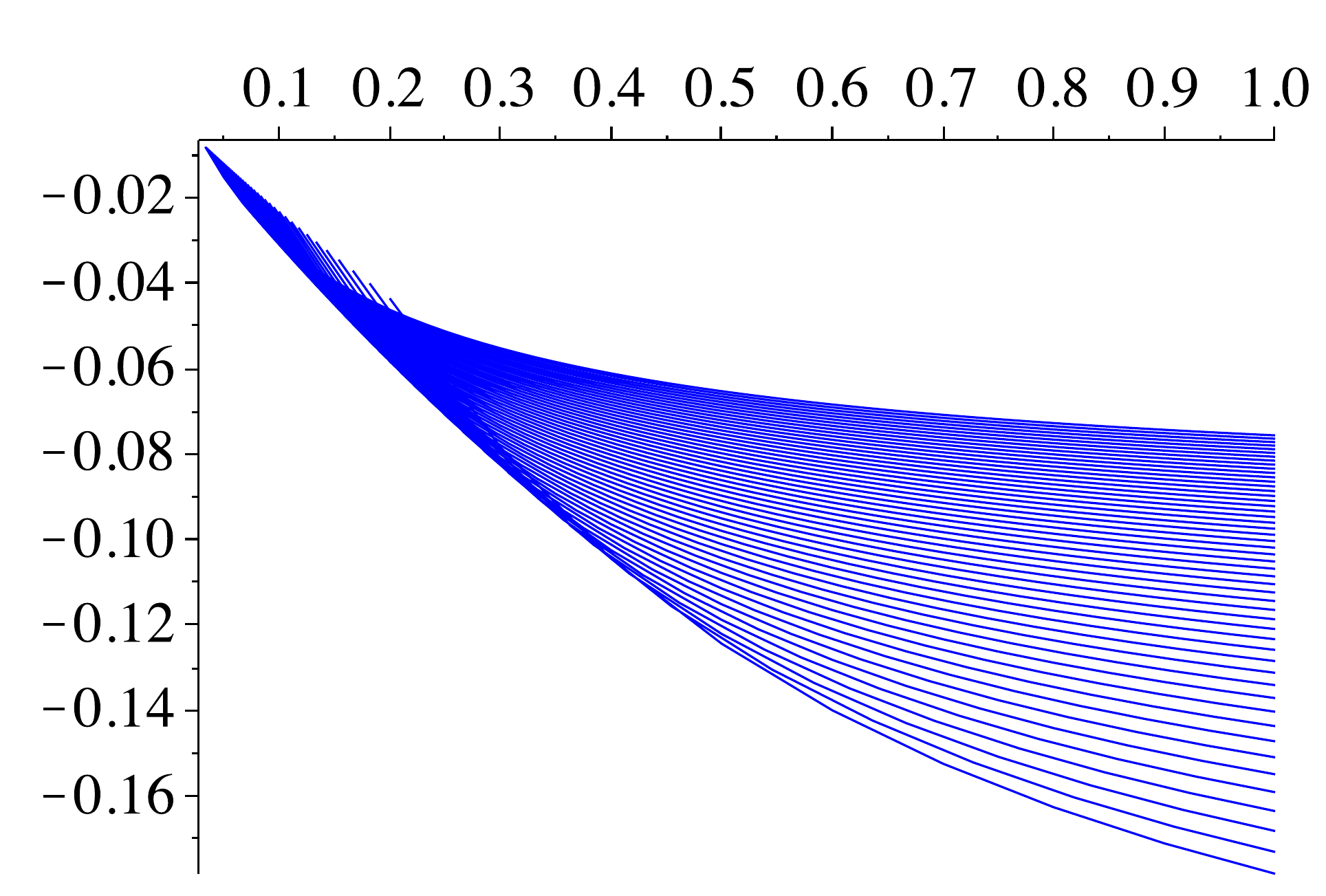}
\end{center}
\vspace*{-.6cm} \caption{\emph{Left: the sequence 
$V_{n,m}^*$ for $2\le m\le n$ and $n=10,\dots,60$; 
Right: the difference between $V_{n,m}^*-H_m^{(2)}$ 
for $n,m$ in the same ranges.}}\label{App:fig-V}
\end{figure}

\begin{figure}[!ht]
\begin{center}
\includegraphics[width=6cm]{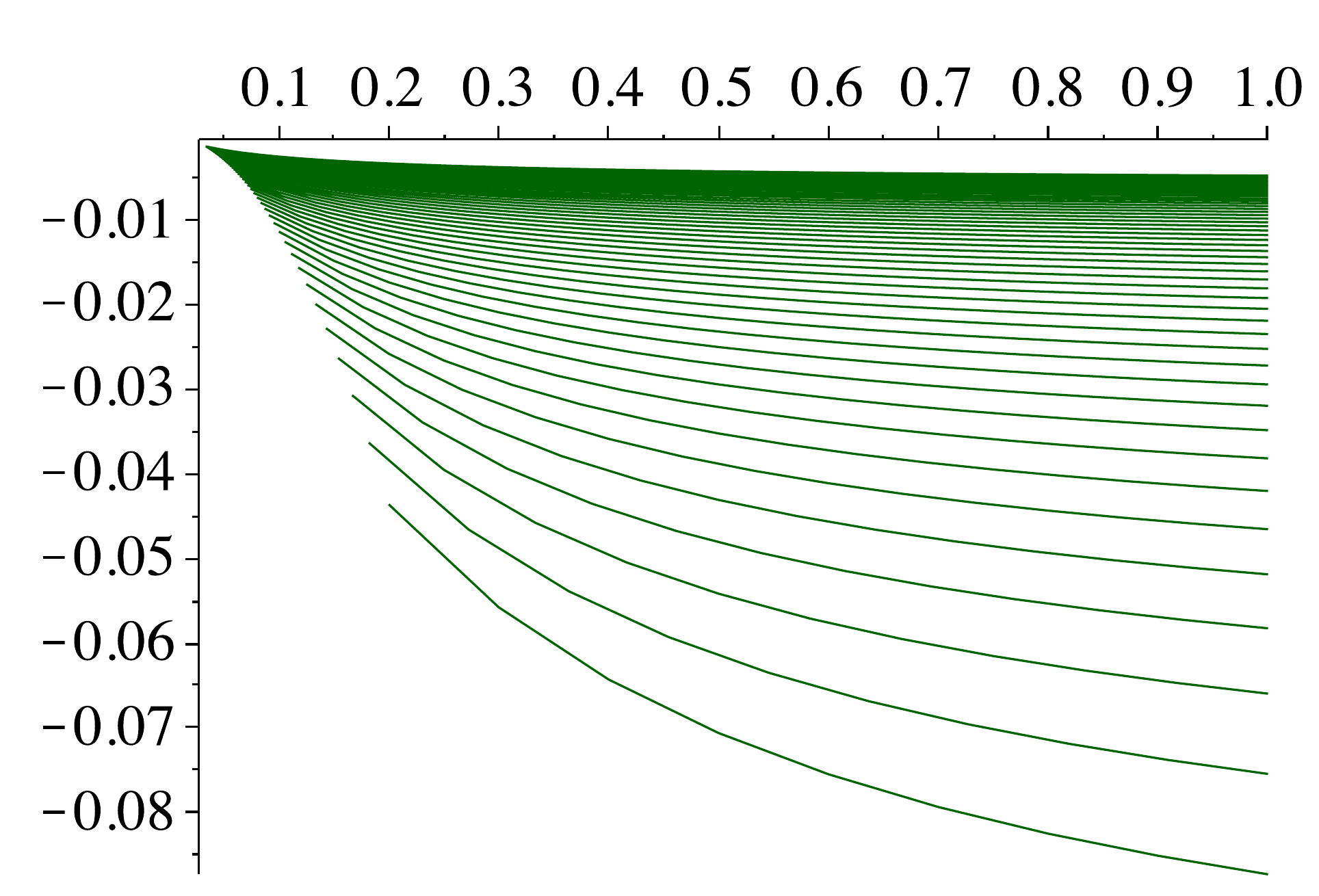}\quad
\includegraphics[width=6cm]{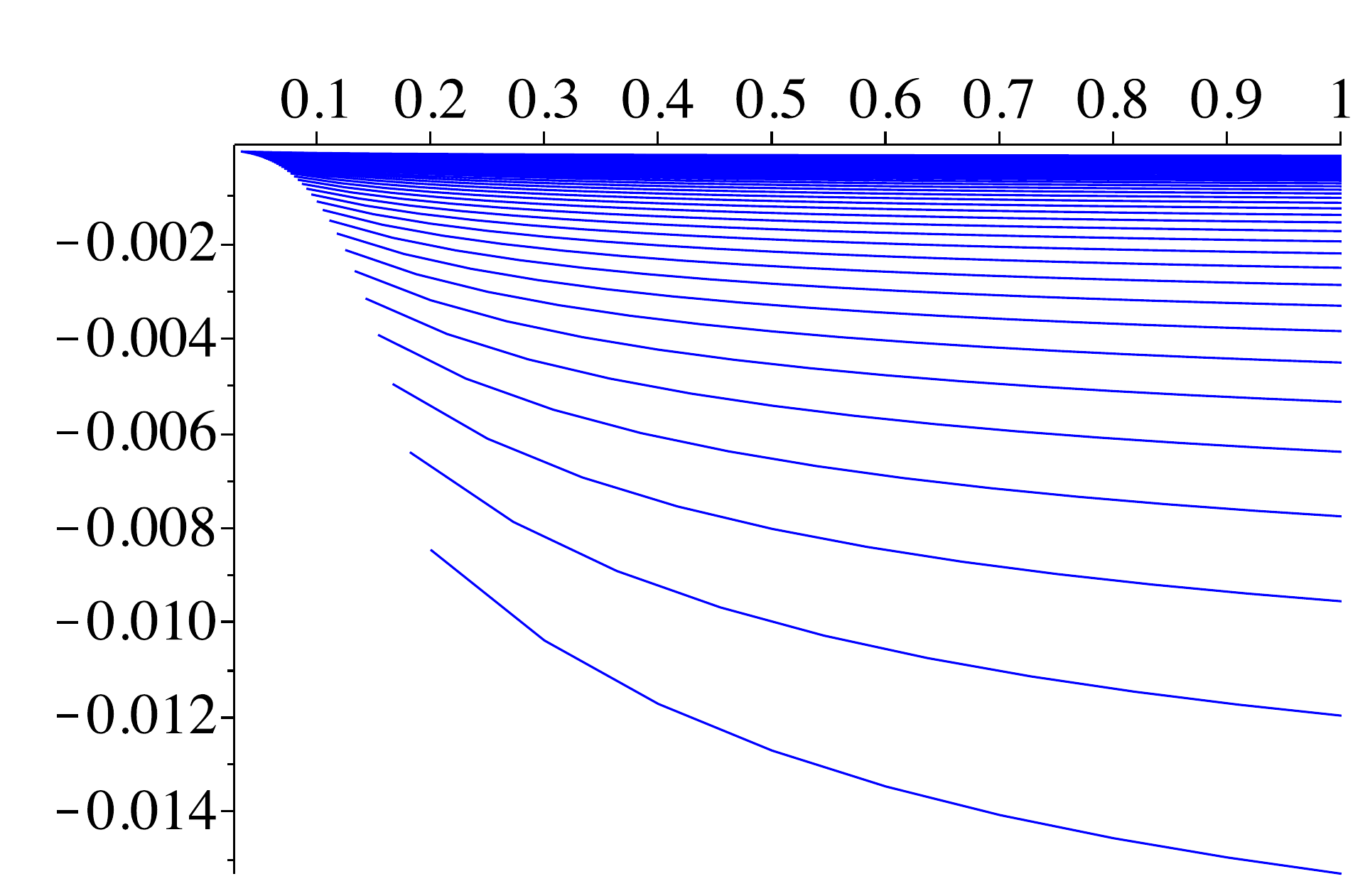}
\end{center}
\vspace*{-.6cm} \caption{\emph{The difference 
$V_{n,m}^*-\bigl(H_{m}^{(2)} + \frac{-2 H_{m}
+ \psi_{1}(\alpha) + 2H_{m}^{(2)}}{n}\bigr)$ (left) and
$V_{n,m}^* -\bigl( H_{m}^{(2)} + \frac{-2 H_{m}
+ \psi_{1}(\alpha) + 2H_{m}^{(2)}}{n}
+ \frac{-\frac{11}{2} H_{m} + \psi_{2}(\alpha)
+ \frac{7}{3} H_{m}^{(2)}}{n^{2}}\bigr)$ (right)
for $2\le m\le n$ and $n=10,\dots,60$.}}\label{App:fig-V2}
\end{figure}

\vspace*{15cm}

\end{document}